\title{Global dimension function on stability conditions and Gepner equations}
\author{Yu Qiu}
\address{Qy:
	Yau Mathematical Sciences Center and Department of Mathematical Sciences,
	Tsinghua University,
    100084 Beijing,
    China.
    \&
    Beijing Institute of Mathematical Sciences and Applications, Yanqi Lake, Beijing, China}
\email{yu.qiu@bath.edu}
\tikzset{->-/.style={decoration={  markings,  mark=at position #1 with
    {\arrow{>}}},postaction={decorate}}}
\tikzset{-<-/.style={decoration={  markings,  mark=at position #1 with
    {\arrow{<}}},postaction={decorate}}}
\def\XX{\mathbb{X}}
\def\QStab{\operatorname{QStab}}
\theoremstyle{plain}
\newtheorem{theorem}{Theorem}[section]
\newtheorem{lemma}[theorem]{Lemma}
\newtheorem{corollary}[theorem]{Corollary}
\newtheorem{proposition}[theorem]{Proposition}
\newtheorem{conjecture}[theorem]{Conjecture}
\theoremstyle{definition}
\newtheorem{definition}[theorem]{Definition}
\newtheorem{example}[theorem]{Example}
\newtheorem{remark}[theorem]{Remark}
\numberwithin{equation}{section}
\def\hh{\mathcal}
\def\kong{\mathbb}
\def\<{\langle}
\def\>{\rangle}
\def\ZZ{\mathbb{Z}}
\def\R{\mathbb{R}}
\def\bP{\mathbb{P}}
\def\CC{\mathbb{C}}
\def\Aut{\operatorname{Aut}}
\def\Ind{\operatorname{Ind}}
\def\Sim{\operatorname{Sim}}
\def\Hom{\operatorname{Hom}}
\def\Stab{\operatorname{Stab}}
\def\diff{\operatorname{d}}
\def\Br{\operatorname{Br}}
\def\deg{\operatorname{deg}}
\newcommand{\h}{\hh{H}}            %heart
\renewcommand{\k}{\mathbf{k}}
\renewcommand{\mod}{\operatorname{mod}}
\newcommand{\Cone}{\operatorname{Cone}}
\renewcommand{\Re}{\operatorname{Re}}
\newcommand{\id}{\operatorname{id}}
\newcommand{\D}{\operatorname{\hh{D}}}
\newcommand{\per}{\operatorname{per}}
\def\arrow{red}
\newcommand{\ST}{\operatorname{ST}}        %spherical twists
\def\coh{\operatorname{coh}}
\def\RHom{\operatorname{RHom}}
\def\ee{\operatorname{\mathcal{E}}}
\newcommand{\twi}{\Psi} % spherical twist
\def\gldim{\operatorname{gldim}}
\def\XStab{\QStab}
\def\sli{\mathcal{P}}
\newcommand{\norm}[1]{\lVert #1 \rVert}
\def\hua{\mathcal}
\def\HMF{\operatorname{HMF}}
\def\RA{\overline{\tau}}
\def\AR{\tau}
\def\DQ{\D_\infty(Q)}
\def\DXQ{\D_\XX(Q)}
\newtheorem{question}{Question}[section]
\newcommand{\qq}[1]{\operatorname{\Gamma}_{#1}Q}
\def\cd{\operatorname{gd}_Q}
\def\Lag{\hua{L}}
\def\ssl{\sigma_{G,s}^{\Lag}}
\def\txl{\AR_\XX^{\Lag}}
\def\zql{\zeta_Q^{\Lag}}
\def\Poly{\operatorname{Poly}}
\begin{document}
%=========================================================
%=========================================================

%=========================================================
\begin{abstract}
    We study the global dimension function $\gldim\colon\Aut\backslash\Stab\D/\CC\to\R_{\ge0}$
    on a quotient of the space of Bridgeland stability conditions
    on a triangulated category $\D$
    as well as Toda's Gepner equation $\Phi(\sigma)=s\cdot\sigma$
    for some $\sigma\in\Stab\D$ and $(\Phi,s)\in\Aut\D\times\CC$.

    For the bounded derived category $\D^b(\k Q)$ of a Dynkin quiver $Q$,
    we show that there is a unique minimal point $\sigma_G$ of $\gldim$ (up to the $\CC$-action),
    with value $1-2/h$,
    which is the solution of the Gepner equation $\AR(\sigma)=(-2/h)\cdot\sigma$.
    Here $\AR$ is the Auslander-Reiten functor and $h$ is the Coxeter number.
    This solution $\sigma_G$ was constructed by Kajiura-Saito-Takahashi.
    We also show that for an acyclic non-Dynkin quiver $Q$,
    the minimal value of $\gldim$ is $1$.

    Our philosophy is that the infimum of $\gldim$ on $\Stab\D$
    is the global dimension for the triangulated category $\D$.
    We explain how this notion could shed light on the contractibility conjecture
    of the space of stability conditions.
    
%We study the global dimension function $\operatorname{gldim}\colon\operatorname{Aut}\backslash\operatorname{Stab}\mathcal{D}/\mathbb{C}\to\mathbb{R}_{\ge0}$ on a quotient of the space of Bridgeland stability conditions on a triangulated category $\mathcal{D}$ as well as Toda's Gepner equatio $\Phi(\sigma)=s\cdot\sigma$ for some $\sigma\in\operatorname{Stab}\mathcal{D}$ and $(\Phi,s)\in\operatorname{Aut}\mathcal{D}\times\mathbb{C}$.
%For the bounded derived category $\mathcal{D}^b(\mathbf{k} Q)$ of a Dynkin quiver $Q$, we show that there is a unique minimal point $\sigma_G$ of $\operatorname{gldim}$ (up to the $\mathbb{C}$-action), with value $1-2/h$, which is the solution of the Gepner equation $\tau(\sigma)=(-2/h)\cdot\sigma$. Here $\tau$ is the Auslander-Reiten functor and $h$ is the Coxeter number. This solution $\sigma_G$ was constructed by Kajiura-Saito-Takahashi. We also show that for an acyclic non-Dynkin quiver $Q$, the minimal value of $\operatorname{gldim}$ is $1$.
%Our philosophy is that the infimum of $\operatorname{gldim}$ on $\operatorname{Stab}\mathcal{D}$ is the global dimension for the triangulated category $\mathcal{D}$. We explain how this notion could shed light on the contractibility conjecture of the space of stability conditions.

    \vskip .3cm
    {\parindent =0pt
    \it Key words:}
    global dimension, stability conditions, Gepner equations

\end{abstract}
\maketitle
%\tableofcontents\addtocontents{toc}{\setcounter{tocdepth}{1}}
%=========================================================

%=========================================================
\section{Introduction}
%=========================================================
%The global dimension is a classical homological invariant of a ring or an algebra.
%We introduce its categorical $\R$-generalization for a triangulated category $\D$,
%as the minimal value of the global dimension function on the space
%of stability conditions on $\D$ and initiate a systematic study.

%=========================================================
\subsection{Global dimension for triangulated categories}
%=========================================================
A Bridgeland stability condition $\sigma=(Z,\hh{P})$ on a triangulated category $\D$
consists of a central charge $Z\in\Hom(K(\D),\CC)$ and
a slicing $\hh{P}=\{\hh{P}(\phi)\mid\phi\in\R\}$, which is a collection of additive/abelian subcategories.
All stability conditions on $\D$ form a complex manifold $\Stab\D$ (\cite{B1}),
which is a homological invariant of $\D$.
The study of spaces of stability conditions already has many applications in several topics,
such Donaldson-Thomas theory, cluster theory and (homological) mirror symmetry
(cf. e.g. \cite{BS,DHKK,KoSo}).

With the following two motivations:
\begin{enumerate}
\item to understand the conjectural Frobenius-Saito structure on the space of stability conditions,
cf. \cite{I,BQS},
\item to understand the theory that realizes stability conditions on Fukaya type categories
via quadratic differentials on Riemann surfaces, cf. \cite{BS}.
\end{enumerate}
We have introduced $q$-deformations of stability conditions in \cite{IQ1}.
There, we have also introduced a function, the \emph{global dimension function},
\[\gldim\hh{P}=\sup\{ \phi_2-\phi_1 \mid
    \Hom(\hh{P}(\phi_1),\hh{P}(\phi_2))\neq0\}\]
on the set of (slicings of)  stability conditions $\sigma=(Z,\hh{P})$.
This function plays a key role in the existence of $q$-deformations of stability conditions.

In the case where $\D=\D^b(A)$ for some algebra $A$ and
where its canonical heart $\h=\mod A$
is concentrated in one slice $\hh{P}(\phi_0)$
(which implies that $\hh{P}(\phi)$ vanishes if $\phi-\phi_0\notin\ZZ$),
we have
\[
    \gldim\hh{P}=\gldim \h =\gldim A,
\]
where $\gldim A$ is the classical global dimension of the algebra $A$.
Our philosophy is that
the infimum of $\gldim$ on $\Stab\D$ is the categorical global dimension
for a triangulated category $\D$, i.e.
\[\boxed{\gldim\D\colon=\inf \gldim(\Stab\D).}\]
For instance, when the category $\D$ is the bounded derived category
$$\DQ\colon=\D^b(Q)$$ of a Dynkin quiver $Q$,
we have $$\gldim\DQ=1-2/h,$$ where $h$ is the Coxeter number.
This refines the usual global dimension of the path algebra of a Dynkin quiver.

%=========================================================
\subsection{Gepner equations}
%=========================================================
A {\em Gepner equation} on the space $\Stab\D$ is an equation of the form
\[\Phi(\sigma)=s\cdot\sigma,\]
where the parameters $(\Phi,s)\in\Aut\D\times\CC$ consist
of an autoequivalence and a complex number.

In the setting of \cite{IQ1,IQ2}, the category
$\D=\D_\XX$ admits a distinguished autoequivalence $\XX$
which yields an isomorphism between the Grothendieck group $K(\D_\XX)$
and $(\ZZ[q,q^{-1}])^{\oplus n}$.
Then the Gepner equation $\XX(\sigma)=s\cdot\sigma$ defines
what we call $q$-stability conditions on $\D_\XX$.

In the usual setting, where one has $K(\D)\cong\ZZ^n$, Gepner
equations where studied by Toda \cite{T1,T2,T3}, who was interested
in finding stability conditions with a symmetry property.
In this case, a solution to such an equation is called a Gepner point.
It is an orbifold point in $\Aut\backslash\Stab\D/\CC$.
Toda's motivation came from the study of Donaldson--Thomas invariants, in particular
for B-branes on Landau-Ginzburg models associated to a superpotential.
Toda constructed the central charge of a conjectural Gepner point $\sigma_G$
for categories $\D=\HMF(f)$ that arise as homotopy categories of graded matrix factorizations.

In the Dyknin case, i.e. when $\HMF(f)$ is derived equivalent to $\DQ$
for a Dynkin quiver $Q$,
Kajiura-Saito-Takahashi \cite{KST} constructed such a Gepner point $\sigma_G$,
a solution of equation \eqref{eq:RA=}, in the sense of Toda
(actually before Toda's work).
The non trivial but very interesting fact is that $\sigma_G$ is
actually the only solution of the Gepner equation $\tau(\sigma)=(-2/h)\cdot\sigma$,
where $\AR$ is the Auslander-Reiten functor and $h$ is the Coxeter number.

This stability condition $\sigma_G$ is the most stable one,
in the sense that
\begin{itemize}
  \item $\sigma_G$ is {\em totally stable}, i.e.
  every indecomposable object is stable.
  Moreover, it is the only stability condition, up to the $\CC$-action,
  where the function $\gldim$ takes its minimal value $1-2/h$.
  \item The phases between stable objects are spread out evenly, cf. Figure~\ref{fig:7},
  where the diagonals of a regular $(n+1)$-gon give the central charges of
  all the indecomposable objects in the $A_n$-case.
\end{itemize}
The notion of total (semi)stability is related to a conjecture of Reineke \cite{R},
which asserts the module category of any Dynkin quiver admits a totally (semi)stable stability condition (see \cite[\S~7.4]{Q2} for a partial answer).
We show that a stability condition $\sigma$ is totally semistable if and only if
$\gldim \sigma \le1$ (Proposition~\ref{pp:leq}).

It would be interesting to calculate all possible Gepner points, say in the case
where $\D=\DQ$.
For instance, when the parameter $s$ in $(\Phi,s)$ is zero,
then Gepner points correspond to stability conditions for certain species
which are obtained from $Q$ by folding (cf. \cite{Q3}).

%=========================================================
\subsection{Contents}
%=========================================================
The paper is organized as follows.

In Section~\ref{sec:pre}, we collect basic facts about stability conditions.
In Section~\ref{sec:gldim}, we introduce the global dimension function $\gldim$
and study its general properties. In particular,
we discuss its relation with total stability and Serre functors.
For a type A quiver, we give a description of all total stability conditions in
Proposition~\ref{pp:A} and an explicit formula for calculating $\gldim$ in such
a situation (cf. Proposition~\ref{pp:gd=}).

In Section~\ref{sec:Dynkin} and Section~\ref{sec:here},
we study the case of the bounded derived category $\DQ$ of an acyclic quiver $Q$.
In Theorem~\ref{thm:last}, we show that when $Q$ is a Dynkin quiver,
the minimal value $1-2/h$ of $\gldim$ is attained by
a unique solution of the Gepner equation \eqref{eq:GP}.
Moreover, in Theorem~\ref{thm:Dynkin}, we show that the image of
$\Stab\DQ/\CC$ under $\gldim$ is $[1-\frac{2}{h},+\infty)$ for a Dynkin quiver $Q$;
and in Theorem~\ref{thm:here}, we show that the image
of $\gldim$ is $[1,+\infty)$ for a non-Dynkin acyclic quiver $Q$.

In Section~\ref{sec:contractible}, we discuss the potential application of $\gldim$
to the contractibility conjecture for spaces of stability conditions.

In Appendix~\ref{sec:q}, we calculate the Gepner points in the spaces of $q$-stability conditions for Dynkin quivers.
%=========================================================
\subsection*{Acknowledgments}
%=========================================================
I would like to thank Tom Bridgeland, Akishi Ikeda, Bernhard Keller, Alastair King,
Kyoji Saito, Yukinobu Toda and Yu Zhou for inspirational discussions.
The idea was developed when I visited IPMU, Tokyo University in March and April 2018.
Also thanks to the anonymous referee who provides many useful suggestion to improve the exposition of the paper.
This work is supported by
National Key R\&D Program of China (No. 2020YFA0713000),
Beijing Natural Science Foundation (Z180003)
and Hong Kong RGC 14300817 (from the Chinese University of Hong Kong).

%=========================================================
\section{Preliminaries}\label{sec:pre}
%=========================================================
\subsection{Bridgeland stability conditions}\label{sec:BSC}
%=========================================================
Following Bridgeland \cite{B1}, we recall the notion of stability conditions
on a triangulated category.
In this paper, $\D$ is a triangulated category with Grothendieck group
$K(\D)$ and we usually assume that $K(\D)\cong\ZZ^n$ for some $n$.
We denote by $\Ind\D$ the set of (isomorphism classes of) indecomposable
objects in $\D$.

\begin{definition}
\label{def:stab}
Let $\D$ be a triangulated category.
A {\it stability condition} $\sigma = (Z, \sli)$ on $\D$ consists of
a group homomorphism $Z \colon K(\D) \to \CC$ called the {\it central charge} and
a family of full additive subcategories $\sli (\phi) \subset \D$ for $\phi \in \R$
called the {\it slicing}
satisfying the following conditions:
\begin{itemize}
\item[(a)]
if  $0 \neq E \in \sli(\phi)$,
then $Z(E) = m(E) \exp(i \pi \phi)$ for some $m(E) \in \R_{>0}$,
\item[(b)]
for all $\phi \in \R$, $\sli(\phi + 1) = \sli(\phi)[1]$,
\item[(c)]if $\phi_1 > \phi_2$ and $A_i \in \sli(\phi_i)\,(i =1,2)$,
then $\Hom_{\D}(A_1,A_2) = 0$,
\item[(d)]for each object $0 \neq E \in \D$, there is a finite sequence of real numbers
\begin{equation}\label{eq:>}
\phi_1 > \phi_2 > \cdots > \phi_l
\end{equation}
and a collection of exact triangles (known as the \emph{HN-filtration})
\begin{equation*}
0 =
\xymatrix @C=5mm{
 E_0 \ar[rr]   &&  E_1 \ar[dl] \ar[rr] && E_2 \ar[dl]
 \ar[r] & \dots  \ar[r] & E_{l-1} \ar[rr] && E_l \ar[dl] \\
& A_1 \ar@{-->}[ul] && A_2 \ar@{-->}[ul] &&&& A_l \ar@{-->}[ul]
}
= E
\end{equation*}
with $A_i \in \sli(\phi_i)$ for all $i$.
\end{itemize}
%For each non-zero object $0 \neq E \in \D$, we define two real numbers by
%$\phi_{\sigma}^+(E):=\phi_1$ and $\phi_{\sigma}^-(E):=\phi_m$ where $\phi_1$ and
%$\phi_m$ are determined by the axiom (d).
The categories $\sli(\phi)$ are then abelian. Their non zero
objects are called {\it semistable of phase $\phi$} and their simple objects
{\it stable of phase $\phi$}.
For a semistable object $E\in\sli(\phi)$,
denote by $\phi_\sigma(E)\colon\!\!=\phi$ its phase.
Finally, for any interval $J\subset\R$, denote by $\sli(J)$ the subcategories consisting of
objects whose HN-filtrations have factors with phases in $J$.
\end{definition}

For a stability condition $\sigma = (Z,\sli)$,
we introduce the set of semistable classes
$\mathcal{C}^{\mathrm{ss}}(\sigma) \subset K(\D)$ by
\begin{equation*}
\mathcal{C}^{\mathrm{ss}}(\sigma) :=\{\,\alpha \in
K(\D)\,\vert\,\text{there is a semistable object }
E \in \D \text{ such that } [E] = \alpha\,\}.
\end{equation*}
We always assume that our stability conditions satisfy the following condition,
called the {\it support property} \cite{KoSo}.
Let $\norm{\,\cdot\,}$ be some norm on $K(\D) \otimes \R$.
A stability condition $\sigma=(Z,\sli)$
satisfies the support property if there is some constant $C >0$ such that
\begin{equation*}
C \cdot |{Z(\alpha)}| > \norm{\alpha}
\end{equation*}
for all $\alpha \in \mathcal{C}^{\mathrm{ss}}(\sigma)$.

There is a natural $\CC$-action
on the set $\Stab(\D)$ of all stability conditions on $\D$, namely:
\[
    s \cdot (Z,\hh{P})=(Z \cdot e^{-\mathbf{i} \pi s},\hh{P}_{\Re(s)}),
\]
where $\hh{P}_x(\phi)=\hh{P}(\phi+x)$.
There is also a natural action on $\Stab(\D)$ by the group of
autoequivalences $\Aut(\D)$, namely:
$$\Phi  (Z,\hh{P})=\big(Z \circ \Phi^{-1}, \Phi (\hh{P}) \big).$$

%=========================================================
\subsection{Gepner equation}
%=========================================================
Following Toda \cite{T1}, we introduce Gepner equations on
the space of stability conditions.
\begin{definition}[Toda]
A {\em Gepner equation} on $\Stab\D$ is
\begin{gather}\label{eq:Phi=s}
    \Phi(\sigma)=s \cdot \sigma,
\end{gather}
where $(\Phi,s)\in(\Aut\D)\times\CC$.

A {\em Gepner point} is a solution of some Gepner equation.
\end{definition}
Note that a Gepner point is an orbifold point of
$\Aut(\D)\backslash\Stab\D/\CC$.

%=========================================================
\section{Global dimension functions}\label{sec:gldim}
%=========================================================
\subsection{Global dimension functions}\label{sec:gdf}
%=========================================================
Motivated by the study of Calabi-Yau-$\XX$ categories
and $q$-stability conditions in \cite{IQ1},
we introduce a function on (certain quotient spaces of)
spaces of stability conditions. It is to be viewed as a real-valued
generalization of the usual notion of global dimension
for algebras or abelian categories.

\begin{definition}
Let $\hh{P}$ be a slicing on a triangulated category $\D$.
Define the {\em global dimension of $\hh{P}$} by
\begin{gather}\label{eq:geq}
\gldim\hh{P}=\sup\{ \phi_2-\phi_1 \mid
    \Hom(\hh{P}(\phi_1),\hh{P}(\phi_2))\neq0\}\in\R_{\ge0}\cup\{+\infty\}.
\end{gather}
The global dimension of a stability condition $\sigma=(Z,\hua{P})$
is defined to be $\gldim\hua{P}$ for its slicing.
We say that $\hua{P}$ (or $\sigma)$ is {\em $\gldim$-reachable} if
there are phases $\phi_1$ and $\phi_2$ such that
\[
    \Hom(\hh{P}(\phi_1),\hh{P}(\phi_2))\neq0
    \quad\text{and}\quad \gldim\hh{P}=\phi_2-\phi_1.
\]
\end{definition}
For an algebra $A$, let $\hua{P}_A$ be the {\em canonical slicing}
on $\D^b(A)$, i.e. we have $\hua{P}_A(0)=\mod A$ and $\hua{P}_A(0,1)=\emptyset$.
Then we have $\gldim A=\gldim\hua{P}_A$.
We have the following facts (cf. \cite[\S~2]{IQ1}).
For any triangulated category $\D$,
\begin{itemize}
\item $\gldim$ is a continuous function on $\Stab\D$ (\cite[Lem.~2.23]{IQ1}).
\item $\gldim$ is an invariant under the $\CC$-action and the action of $\Aut\D$.
\end{itemize}
Thus, we have a function
\begin{gather}\label{eq:gldim}
    \gldim\colon\Aut\backslash\Stab\D/\CC\to\R_{\ge0}.
\end{gather}
Here $\R_{\ge0}$ includes $+\infty$.
We will sometimes also view $\gldim$ as a function on $\Stab\D$ or on $\Stab\D/\CC$.

%=========================================================
\subsection{Total (semi)stability}
%=========================================================
We recall the notion of total (semi)stability,
which is motivated by \cite[Conjecture~7.13]{Q2} due to Reineke \cite{R}.

\begin{definition}\cite{Q2}
A stability condition $\sigma$ on $\D$ is {\em totally (semi)stable},
if every indecomposable object is (semi)stable with respect to $\sigma$.
We will call such $\sigma$ a total stability condition.
\end{definition}

%Let $\sigma$ be a stability condition on a triangulated category $\D$.
\begin{lemma}\label{lem:=>}
If $\gldim\sigma\leq1$, then $\sigma$ is totally semistable.
Moreover, if the inequality is strict, then $\sigma$ is totally stable.
\end{lemma}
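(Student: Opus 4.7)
The plan is a direct argument using Harder-Narasimhan filtrations together with indecomposability. Let $E$ be an indecomposable object of $\D$ and let $A_1,\dots,A_l$ be its HN factors with phases $\phi_1>\phi_2>\cdots>\phi_l$. The goal is to show that $l=1$ (semistability), and under the strict inequality, that $E$ has no proper sub-object in its slice (stability).

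First I would handle total semistability. Assume $\gldim \sigma\leq 1$, and suppose for contradiction that $l\geq 2$. Consider the final HN triangle
\[
E_{l-1}\longrightarrow E\longrightarrow A_l\longrightarrow E_{l-1}[1].
\]
Since $E$ is indecomposable and both $E_{l-1}$ and $A_l$ are nonzero, this triangle cannot split; hence the connecting morphism $A_l\to E_{l-1}[1]$ is nonzero. Next I would invoke the standard fact (an easy induction on the length of an HN filtration using the long exact $\Hom$-sequence of the triangles in the HN filtration of $E_{l-1}[1]$) that a nonzero morphism from a semistable object $A_l$ into an object $E_{l-1}[1]$ with HN factors $A_1[1],\dots,A_{l-1}[1]$ must yield a nonzero morphism $A_l\to A_i[1]$ for some $i<l$. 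This then gives $A_l\in\sli(\phi_l)$ and $A_i[1]\in\sli(\phi_i+1)$ with $\Hom(A_l,A_i[1])\neq 0$, so that
\[
(\phi_i+1)-\phi_l\;\leq\;\gldim\sigma\;\leq\;1,
\]
whence $\phi_i\leq\phi_l$, contradicting $\phi_i>\phi_l$. Therefore $l=1$ and $E$ is semistable.

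For the strict case, assume additionally $\gldim\sigma<1$ and suppose $E$ is semistable of phase $\phi$ but not stable. Then $E$ is not simple in the abelian category $\sli(\phi)$, so there is a short exact sequence $0\to A\to E\to B\to 0$ in $\sli(\phi)$ with both $A,B$ nonzero, corresponding to a triangle $A\to E\to B\to A[1]$. Indecomposability of $E$ forces this triangle to be nonsplit, so the connecting map gives $\Hom(B,A[1])\neq 0$ with $B\in\sli(\phi)$ and $A[1]\in\sli(\phi+1)$. Hence $1=(\phi+1)-\phi\leq\gldim\sigma<1$, a contradiction. Thus $E$ is stable, proving total stability.

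The main technical point is the inductive reduction ``nonzero morphism into $E_{l-1}[1]$ yields a nonzero morphism into some HN factor $A_i[1]$'', but this is a classical consequence of applying $\Hom(A_l,-)$ to the HN triangles and using the semistability of $A_l$ together with the already-established inequalities among HN phases; I do not expect any serious obstacle beyond bookkeeping.
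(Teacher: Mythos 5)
Your argument is correct and is essentially the paper's own proof in mirror image: the paper peels off the \emph{first} HN triangle $A_1\to E\to E'\to A_1[1]$ and d\'evisses the nonzero connecting map to get $\Hom(A_k,A_1[1])\neq 0$, whereas you peel off the last triangle and d\'evisse into the factors of $E_{l-1}[1]$; both yield the same phase inequality and contradiction. Your treatment of the strict case (restricting to indecomposable $E$ so that the short exact sequence in $\sli(\phi)$ is forced to be nonsplit) is in fact slightly more careful than the paper's wording, but the idea is identical.
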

\begin{proof}
Suppose that $E\in\Ind\D$ is not totally semistable. Then it admits an HN-filtration \eqref{eq:>} for some $l>1$.
So there is a triangle $A_1\to E\to E'\xrightarrow{\alpha} A_1[1]$ such that
$\alpha\in\Hom(E',A_1[1])$ does not vanish and $E'$ admits a filtration
with factors $(A_2,\ldots,A_l)$.
Therefore, we have $\Hom(A_k,A_1[1])\neq0$ for some $2\le k\le l$,
which implies
\begin{gather}\label{eq:gd>1}
    \gldim\sigma\geq\phi_\sigma(A_1[1])-\phi_\sigma(A_k)=\phi_1+1-\phi_k>1.
\end{gather}
Thus the first assertion holds.

For the second assertion, take a semistable but not stable object $E\in\sli(\phi)$.
Then it admits a filtration with factors in $\sli(\phi)$ (i.e. all stable subquotients
have phase $\phi$).
Applying the same argument as above we get the same calculation as
in \eqref{eq:gd>1} with $\ge 1$ at the end, which implies the second assertion.
\end{proof}

\begin{lemma}\label{lem:<=}
If $\sigma$ is totally semistable, then $\gldim\sigma\le1$.
Moreover, if $\sigma$ is totally stable and $\gldim$-reachable, then $\gldim\sigma<1$.
\end{lemma}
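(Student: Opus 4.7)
The plan is to prove both assertions by contradiction with a common construction. Given a non-zero morphism $f\colon A_1\to A_2$ between semistables of phases $\phi_1<\phi_2$, I complete $f$ to a triangle
\[
A_2[-1]\longrightarrow E\longrightarrow A_1\xrightarrow{f}A_2
\]
and analyse $E$ using the hypothesis of total (semi)stability.

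For the first assertion, I suppose $\gldim\sigma>1$ and choose such data with $\phi_2-\phi_1>1$. The triangle above realises a length-two HN filtration of $E$ with semistable factors $A_2[-1]$ of phase $\phi_2-1$ and $A_1$ of phase $\phi_1$; the inequality $\phi_2-1>\phi_1$ is precisely the required strict phase ordering. I then rule out both alternatives for $E$. If $E$ is indecomposable, its non-trivial HN filtration contradicts total semistability outright. If $E$ decomposes into indecomposable summands, total semistability makes each summand semistable, so grouping them by phase produces a second HN filtration; by uniqueness of HN filtrations it must agree with the original one, forcing $E\cong A_2[-1]\oplus A_1$ with the triangle split. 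But a split triangle has vanishing boundary, contradicting $f\neq 0$.

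For the second assertion, I assume $\sigma$ is totally stable and that $\gldim\sigma=1$ is attained, and run the same construction with $\phi_2-\phi_1=1$. After decomposing $A_1$ and $A_2$ and selecting an indecomposable summand pair with non-vanishing Hom, I may assume $A_1$ and $A_2$ are indecomposable, hence stable by total stability. Since $\phi_2-1=\phi_1$, both $A_1$ and $A_2[-1]$ are stable of the same phase $\phi_1$, and the triangle becomes a short exact sequence $0\to A_2[-1]\to E\to A_1\to 0$ inside $\hh{P}(\phi_1)$. Total stability makes every indecomposable object of $\hh{P}(\phi_1)$ simple, so $\hh{P}(\phi_1)$ is semisimple; the sequence must split, again forcing $f=0$ and giving the contradiction, so $\gldim\sigma<1$.

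The main obstacle is the decomposable case in the first assertion: uniqueness of HN filtrations must be invoked strongly enough to conclude not only that $E\cong A_2[-1]\oplus A_1$ at the level of objects, but that the triangle itself splits, so that its boundary map $f$ vanishes. Once this step is secured, the remaining arguments are a straightforward unwinding of Bridgeland's axioms and of the definitions of total (semi)stability.
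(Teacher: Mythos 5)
Your construction of the triangle is the same as the paper's up to relabelling (the paper writes it as $E_2\to X\to E_1\xrightarrow{\alpha}E_2[1]$ with $E_2=A_2[-1]$), and your treatment of the indecomposable case and of the second assertion is sound; your semisimplicity argument for $\hh{P}(\phi_1)$ is in fact more explicit than the paper, which disposes of the second assertion with ``similarly''. Where you genuinely diverge is the decomposable case. The paper argues on the indecomposable summands $X_j$ of the middle term: by total semistability each $X_j$ is semistable, so no summand can both receive a nonzero map from $A_2[-1]$ and admit a nonzero map to $A_1$ (that would force $\phi_2-1\le\phi_\sigma(X_j)\le\phi_1$, contradicting $\phi_2-\phi_1>1$), whence the summands regroup as $Y_1\oplus Y_2$ with $\Hom(A_2[-1],Y_1)=0=\Hom(Y_2,A_1)$, and the octahedral axiom splits the triangle. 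You instead appeal to uniqueness of HN filtrations, and the step you flag as the main obstacle is indeed available, but you should write it out: uniqueness holds at the level of filtration \emph{triangles}, not merely of the factors. Concretely, if $A\xrightarrow{u}E\xrightarrow{v}B\xrightarrow{w}A[1]$ and $A'\xrightarrow{u'}E\xrightarrow{v'}B'\xrightarrow{w'}A'[1]$ are two HN filtrations of $E$ with top phase $\psi_1$ and bottom phase $\psi_2<\psi_1$, then $v'\circ u=0$ for phase reasons, so $u$ lifts through $u'$ to some $h\colon A\to A'$; the symmetric lift $h'$ satisfies $u\circ(h'h-1_A)=0$, and since $\Hom(A,B[-1])=0$ (because $\psi_1>\psi_2-1$) this gives $h'h=1_A$, so $h$ is invertible; completing $(h,1_E)$ to a morphism of triangles produces an invertible $g\colon B\to B'$ with $w'\circ g=h[1]\circ w$, so $w=0$ if and only if $w'=0$. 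Comparing the split filtration of $E\cong A_2[-1]\oplus A_1$ (connecting map zero) with the original one (connecting map $f$) then yields $f=0$, the desired contradiction. With that inserted your proof is complete; the two routes are of comparable cost, yours trading the octahedral axiom for the strong form of HN uniqueness, and both rely equally on the implicit Krull--Schmidt decomposition of the middle term.
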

\begin{proof}
Suppose that $\gldim\sigma>1$, then there are semistable objects $E_1$ and $E_2[1]$
with phase $\phi_i=\phi_\sigma(E_i)$ such that
\[
    \Hom(E_1,E_2[1])\neq0, \quad \phi_2-\phi_1>0.
\]
Hence there is an object $X$ that sits in the triangle
\begin{gather}\label{eq:ses X}
    E_2\to X \to E_1 \xrightarrow{\alpha} E_2[1]
\end{gather}
with $\alpha\neq0$.

By the uniqueness of the HN-filtration, \eqref{eq:ses X} is the HN-filtration of $X$.
As the filtration is of length greater than one
the object X is non-semistable, hence decomposible (by total semistability).
By assumption, the indecomposible factors of $X$ are all semistable.
Hence they form a splitting HN-filtration of $X$ different from that in \eqref{eq:ses X}.
This contradicts the uniqueness of the HN-filtration.
%This is the HN-filtration of $X$.
%On the other hand,
%If $X$ is indecomposable, then it is semistable with phase $\phi_X$
%and \eqref{eq:ses X} implies that $\phi_2\le\phi_X\le\phi_1$, which is a contradiction.
%Thus $X$ is not in $\Ind\D$.
%As the indecomposable summands are all semistable,
%they give the factors of the HN-filtration of $X$
%when ordering the phase decreasingly and gathering (taking direct sum) objects with the same phase.
%By the uniqueness of HN-filtration, $X=E_1\oplus E_2$. This contradicts to $\alpha\neq0$ in \eqref{eq:ses X}.
%This proof above is simplified a bit by my student Wu Dongjian.

Similarly for the second statement. The only difference is that when $\sigma$ is totally stable,
one can only deduce $\phi_2-\phi_1<1$ for any $\Hom(M_1,M_2)\neq0, M_i\in\hh{P}(\phi_i)$.
So we need the gldim-reachable assumption to get $\gldim\sigma<1$.
\end{proof}

Combining the lemmas above, we have the following.
\begin{proposition}\label{pp:leq}
$\sigma$ is totally semistable if and only if $\gldim\sigma\leq1$.
Moreover, when $\sigma$ is $\gldim$-reachable,
then $\sigma$ is totally stable if and only if $\gldim\sigma<1$.
\end{proposition}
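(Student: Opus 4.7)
The plan is simply to combine the two preceding lemmas, since together they already cover both directions of both biconditionals. No genuinely new argument is needed at the proposition level; the work is packaged into Lemma~\ref{lem:=>} and Lemma~\ref{lem:<=}, and the role of this proposition is to consolidate them.

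First I would address the unconditional biconditional. The forward direction ``totally semistable $\Rightarrow \gldim \sigma \le 1$'' is exactly the first assertion of Lemma~\ref{lem:<=}, which builds, from a nonzero morphism between semistable objects with phase gap $>1$, a triangle that must split and thereby contradicts the assumed semistability of every indecomposable. The reverse direction ``$\gldim \sigma \le 1 \Rightarrow$ totally semistable'' is the first assertion of Lemma~\ref{lem:=>}, using that any indecomposable that fails to be semistable admits a nontrivial HN-filtration, producing a morphism that forces $\gldim \sigma > 1$.

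Next I would handle the ``stable'' refinement under the $\gldim$-reachability hypothesis. The direction ``$\gldim \sigma < 1 \Rightarrow$ totally stable'' is the second assertion of Lemma~\ref{lem:=>}: a semistable-but-not-stable object admits a nontrivial filtration whose factors share the same phase, and the same HN-argument then yields $\gldim \sigma \ge 1$. The converse, ``totally stable and $\gldim$-reachable $\Rightarrow \gldim \sigma < 1$'', is the second assertion of Lemma~\ref{lem:<=}: reachability upgrades the strict inequality obtained from the splitting argument into a strict bound on $\gldim \sigma$ itself, rather than merely on realized phase gaps.

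The only subtle point — and the one I would flag rather than obscure — is the asymmetry in the second biconditional: the reachability hypothesis is essential because $\gldim \sigma$ is defined as a supremum and need not be attained. Without reachability, a totally stable $\sigma$ might satisfy $\gldim \sigma = 1$ as a limiting value while every realized phase gap is strictly less than $1$. So the final proof is one or two sentences of bookkeeping combining \ref{lem:=>} and \ref{lem:<=}; there is no serious obstacle to overcome at this step.
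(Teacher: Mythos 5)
Your proposal is correct and matches the paper exactly: the paper's proof is literally ``Combining the lemmas above,'' i.e.\ the two biconditionals are assembled from Lemma~\ref{lem:=>} and Lemma~\ref{lem:<=} in precisely the pairing you describe. Your remark on why $\gldim$-reachability is needed for the second biconditional is also consistent with the paper's formulation.
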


\def\Sss{\operatorname{ToSt}(A_n)}
%=========================================================
\subsection{Totally stable stability conditions for type A quiver}\label{sec:A}
%=========================================================
Let us describe all total stability conditions for type A quiver and
give explicit formula of global dimension function in such a case.
Note that the $\CC$-actions preserve total stability.
Consider the $A_n$ quiver with straight orientation
\begin{gather}\label{eq:An}
    Q=\overrightarrow{A_n}\colon 1\leftarrow 2\leftarrow \cdots \leftarrow n.
\end{gather}
For $1\le i\le j\le n$,
denote by $M_{ij}$ the indecomposable object in $\mod \k Q$ that corresponds to
the representation $\{V_k,f_a\mid k\in Q_0, a\in Q_1\}$ of $Q$
with $V_k=\k$ if $i\le k\le j$ and $V_k=0$ otherwise.
So $S_i=M_{ii}$ are the simple objects in $\mod \k Q$.
The Auslander-Reiten quiver of $\mod \k Q$ is illustrated as follows.
\begin{gather}\label{eq:AR An}
\def\xquan{circle(.5)}
\begin{tikzpicture}[xscale=.7,yscale=-.7,rotate=-45]
\clip[rotate=-45](-.8,2)rectangle(6.3,15.5);
\foreach \j in {-3,...,8}{
\foreach \k in {0,...,8}{
    \draw[teal,->,>=stealth,very thick]
        (2*\k,2*\j+.7)to++(0,.6);
    \draw[teal,->,>=stealth,very thick]
        (2*\k+.7,2*\j)to++(.6,0);
}}
\draw[rotate=-45,white,fill=white](0,2)rectangle(-2,19)(8,2)rectangle(6,19);
\foreach \j in {-3,...,8}{
\foreach \k in {0,...,8}{
    \draw[white](2*\k,2*\j) node[circle,draw=teal] (x\k\j) {$00$};
}}
\foreach \j in {1,...,5}{
\foreach \k in {\j,...,5}{
    \draw[](2*\k,2*\j)node[circle,draw=teal,fill=teal!10] {$\j\k$};
}}
\end{tikzpicture}
\end{gather}
Denote by $\Sss$ the subspace in $\Stab\D_\infty(A_n)$
consisting of total stability conditions.
Denote by $\Poly(m)$ be the set of convex $m$-gon on $\R^2=\CC$
with vertices $P_0,P_1,\ldots,P_m=P_0$ in anticlockwise order satisfying $P_0=0$ and $P_1$=1.

\begin{proposition}\label{pp:A}
There is a natural bijection
\[\mathfrak{Z}\colon\Sss/\CC\to\Poly(n+1),\]
sending a stability condition $\sigma$ to an $(n+1)$-gon $\mathbf{P}$.
such that the oriented diagonals of $\mathbf{P}$ gives the central charges, with respect to $\sigma$,
of indecomposable objects in $\D_\infty(A_n)$.
\end{proposition}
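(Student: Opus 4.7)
The plan is to exhibit $\mathfrak{Z}$ directly from the central charge of $\sigma$ and to translate convexity of the resulting polygon into the phase inequalities forced by total stability on each hereditary short exact sequence among the $M_{ij}$.

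For $\sigma=(Z,\hh{P})\in\Sss$, I first use the $\CC$-action to normalize $Z(S_1)=1$; this pins down a representative in the orbit up to the shift $[2]$, which leaves $Z$ fixed. Set
\[
    P_k:=Z(M_{1k})=\sum_{\ell=1}^{k}Z(S_\ell),\qquad 0\le k\le n,
\]
so that $P_0=0$, $P_1=1$, and $Z(M_{ij})=P_j-P_{i-1}$ for all $1\le i\le j\le n$ by additivity of $Z$ on $K$-classes. Each oriented segment $\overrightarrow{P_{i-1}P_j}$ of the $(n+1)$-gon thus carries the central charge of a distinct indecomposable of $\D_\infty(A_n)$, and I set $\mathfrak{Z}(\sigma):=(P_0,\ldots,P_n)$.

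Next I verify convexity. For each triple $0\le a<b<c\le n$, the short exact sequence
\[
    0\to M_{b+1,c}\to M_{a+1,c}\to M_{a+1,b}\to 0
\]
in $\mod\k Q$, the unique such SES among the $M_{ij}$ coming from the hereditary structure of $\mod\k Q$, lifts to a distinguished triangle in $\D_\infty(A_n)$. Total stability tells us that all three terms are stable and pairwise non-isomorphic, so each of the three morphisms in the triangle is nonzero (the connecting morphism because the SES is non-split), and standard phase control for morphisms between stable objects yields the strict inequalities
\[
    \phi_\sigma(M_{b+1,c})<\phi_\sigma(M_{a+1,c})<\phi_\sigma(M_{a+1,b})<\phi_\sigma(M_{b+1,c})+1.
\]
Translated into $\arg(P_c-P_b)$, $\arg(P_c-P_a)$ and $\arg(P_b-P_a)$ via $Z(M_{ij})=P_j-P_{i-1}$, these are precisely the conditions that the triple $(P_a,P_b,P_c)$ lies in strict convex position in the cyclic order prescribed by the paper; ranging over all triples yields the convex $(n+1)$-gon.

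For the inverse, given $\mathbf{P}\in\Poly(n+1)$ I set $Z(S_i):=P_i-P_{i-1}$ and extend $\ZZ$-linearly. Convexity of $\mathbf{P}$ forces the $Z(S_i)$ to have strictly monotone arguments confined to an open half-plane, so $Z$ extends uniquely to a stability condition $\sigma_\mathbf{P}$ with heart $\mod\k Q$ (up to shift, after a small $\CC$-perturbation pushing $Z(S_1)$ off the real axis if one insists on simples in the open upper half-plane) by the standard existence result for central charges on a hereditary abelian category. Reading the equivalence of the previous paragraph in reverse verifies that every $M_{ij}$ is stable in $\sigma_\mathbf{P}$, so $\sigma_\mathbf{P}\in\Sss$ and $\mathfrak{Z}(\sigma_\mathbf{P})=\mathbf{P}$; injectivity of $\mathfrak{Z}$ follows because $\mathbf{P}$ recovers $Z$, and $Z$ together with total stability recovers the full slicing. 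The main technical obstacle is extracting the wrap-around inequality $\phi_\sigma(M_{a+1,b})<\phi_\sigma(M_{b+1,c})+1$ from the connecting morphism $M_{a+1,b}\to M_{b+1,c}[1]$: this is precisely what distinguishes genuine convexity from a weaker cyclic-order property, and it relies on the fact that $M_{a+1,b}$ and $M_{b+1,c}[1]$ are non-isomorphic stable objects so that the nonzero morphism enforces strict phase inequality. Once this point is in place, the remainder is the routine dictionary between phase inequalities and convex-position inequalities.
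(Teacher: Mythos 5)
Your forward direction (well-definedness and injectivity of $\mathfrak{Z}$) is sound, and is in fact slightly more thorough than the paper's: the paper only checks local convexity, namely that each angle $P_{i-1}P_iP_{i+1}$ lies in $(0,\pi)$ via the extension of $S_i$ by $S_{i+1}$, whereas you verify a consistent orientation for every triple $(P_a,P_b,P_c)$ using the nested short exact sequences $0\to M_{b+1,c}\to M_{a+1,c}\to M_{a+1,b}\to 0$, which gives global convex position directly. The normalization by the $\CC$-action, the strictness of the phase inequalities via non-isomorphic stable objects, and the role of the connecting morphism are all handled correctly.

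The surjectivity half, however, contains a genuine gap. You assert that convexity of $\mathbf{P}$ forces the $Z(S_i)=P_i-P_{i-1}$ to have arguments confined to an open half-plane, and you then invoke the standard construction of a stability condition from the hereditary heart $\mod\k Q$ with central charge valued in a half-plane. But the arguments of the edge vectors $Z(S_1),\dots,Z(S_n)$ of a convex $(n+1)$-gon increase successively by the exterior angles at $P_1,\dots,P_{n-1}$, and the sum of these can be arbitrarily close to $2\pi$; already in the hexagon of Figure~\ref{fig:tt} the edge $Z_5$ points into the lower half-plane while $Z_1=1$. For such polygons the simples of $\mod\k Q$ do not lie in any common half-plane, so no stability condition with heart $\mod\k Q$ (even up to rotation) realizes this central charge --- the corresponding totally stable stability condition has as heart a nontrivial tilt of $\mod\k Q$ --- and the existence result you cite simply does not apply. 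The paper avoids this by constructing the slicing directly, declaring $\phi_\sigma(M_{ij})=\frac{1}{\pi}\arg\overrightarrow{P_{i-1}P_j}$ for a consistent branch of the argument, and then verifying the Hom-ordering axiom for \emph{every} pair of indecomposables with a nonzero Hom, reducing to the case where the source is projective by the $C_{n+1}$-symmetry of $\D_\infty(A_n)/[2]$. Your ``read the equivalence in reverse'' step only re-checks the sub/quotient inequalities inside the nested families, which neither supplies a heart nor verifies the axiom for general Hom-pairs. To repair the argument you must either adopt the paper's direct construction of the slicing, or identify for each polygon the correct tilted heart whose simples do land in a half-plane.
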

\begin{figure}[ht]\centering
\begin{tikzpicture}[xscale=.7,yscale=.5]
\path (-1,0) coordinate (t1) node[below]{$P_0$};
\path (1,0) coordinate (t2) node[below]{$P_1$};
\path (3,3) coordinate (t3) node[right]{$P_2$};
\path (3.5,7) coordinate (t4)node[right]{$P_3$};
\path (-1,10) coordinate (t5)node[above]{$P_4$};
\path (-4,5) coordinate (t6)node[left]{$P_5$};
\draw(8,5)node{$Z(M_{ij})=\overrightarrow{P_{i-1}P_j}$};
\draw(8,4)node[below]{$1\le i\le j\le 5$};
\draw[red](8,1)node[below]{$\alpha=\arg\overrightarrow{P_3P_0}-\arg\overrightarrow{P_1P_4}$};
\foreach \j/\i in {1/4,2/5}
{\draw[red] (t\j) to (t\i);}
\foreach \j in {1,2,3,4,5,6}
{\draw (t\j) node {$\bullet$};}
\foreach \j/\i in {1/2,2/3,3/4,4/5,5/6}
{\draw[->,>=stealth,very thick,blue!50] (t\j) to (t\i);}
\draw[dashed,thick,blue!50] (t6) to (t1);
\draw[blue!50]($(t1)!.5!(t2)$) node[below] {$Z_1$};
\draw[blue!50]($(t3)!.5!(t2)$) node[below right] {$Z_2$};
\draw[blue!50]($(t3)!.5!(t4)$) node[right] {$Z_3$};
\draw[blue!50]($(t5)!.5!(t4)$) node[above] {$Z_4$};
\draw[blue!50]($(t5)!.5!(t6)$) node[above left] {$Z_5$};

\draw[red,->,>=stealth] ($(t1)!.3!(t4)$) to[bend left] ($(t2)!.3!(t5)$);
\draw[red]($(t1)!.25!(t5)$) node[right]{$\quad\alpha$};
\end{tikzpicture}
\caption{Convex hexagon for a total stability condition on $\D_\infty(A_5)$}\label{fig:tt}
\end{figure}
\begin{proof}
Given a total stability condition $\sigma$ on $\D_\infty(A_n)$,
we can assume $Z(S_1)=1$ with phase $\phi_\sigma(S_1)=0$ up to the $\CC$-action.
Then $\sigma$ is uniquely determined by $Z_i=Z(S_i)$ as we have
$$Z(M_{ij})=Z_i+\cdots+Z_j.$$
Let $P_i$ be the point in $\R^2$ that corresponds to $Z(M_{1i}), 1\le i\le n$.
We claim that $\{P_i\mid 0\le i \le n\}$ forms a convex polygon $\mathbf{P}$.
To see this, we only need to show that the angle $P_{i-1}P_iP_{i+1}$ is in $(0,\pi)$.
This is equivalent to
\begin{gather}\label{eq:123}
    \begin{cases}
    \phi_\sigma(M_{1n}[-1])<\phi_\sigma(S_1)<\phi_\sigma(M_{1n}[-1])+1,\\
    \phi_\sigma(S_i)<\phi_\sigma(S_{i+1})<\phi_\sigma(S_i)+1,&1\le i\le n,\\
    \phi_\sigma(S_n)<\phi_\sigma(M_{1n}[1])<\phi_\sigma(S_n)+1,
\end{cases}
\end{gather}
since $Z_i=\overrightarrow{P_{i-1}P_i}$.
Now, as there are non-zero morphisms in
\[\begin{cases}
    \Hom(M_{1n}[-1],M_{2n}[-1]),&\Hom(M_{2n}[-1],S_{1})\,\quad\text{and}\quad \Hom(S_{1},M_{1n}),\\
    \Hom(S_i,M_{i,i+1}),&\Hom(M_{i,i+1},S_{i+1})\quad\;\text{and}\quad \Hom(S_{i+1},S_i[1]),\\
    \Hom(S_n,M_{1,n-1}[1]),&\Hom(M_{1,n-1},M_{1n})\quad\text{and}\quad \Hom(M_{1n},S_n),\\
\end{cases}\]
where all the objects are stable, we have \eqref{eq:123}.
Hence we obtain an injective map $\mathfrak{Z}\colon\Sss/\CC\to\Poly(n+1)$.

What is left to show, is that $\mathfrak{Z}$ is surjective.
Given a convex $(n+1)$-gon $\mathbf{P}$ with vertices $P_i$,
we claim that there is a total stability condition $\sigma=(Z,\hh{P})$
satisfying
\begin{gather}\label{eq:Z phi}
\begin{cases}
    Z(M_{ij})=\overrightarrow{P_{i-1}P_j},\\
    \phi_\sigma(M_{ij})=(\arg \overrightarrow{P_{i-1}P_j})/\pi,
\end{cases}\end{gather}
where $\arg$ takes values in $[0,2\pi)$.
In fact, we only need to check that for any arrow $E\to F$
in the AR-quiver of $\D_\infty(A_n)$, one has $\phi_\sigma(E)<\phi_\sigma(F)$.
There are three types of such arrows (up to shift):
\[
    M_{ij}\to M_{i+1,j},\quad M_{ij}\to M_{i,j+1}\quad\text{and}\quad M_{kn}\to M_{1,k-1}[1],
\]
for $1\le i\le j\le n$ and $2\le k\le n$.
But
\[\begin{cases}
    (\arg \overrightarrow{P_{i-1}P_j})/\pi<(\arg \overrightarrow{P_{i}P_j})/\pi,\\
    (\arg \overrightarrow{P_{i-1}P_j})/\pi<(\arg \overrightarrow{P_{i-1}P_{j+1}})/\pi\quad\text{and}\\
    (\arg \overrightarrow{P_{k-1}P_n})/\pi<(\arg \overrightarrow{P_{0}P_{k-1}})/\pi+1\\
\end{cases}\]
indeed follow from the convexity of $\mathbf{P}$.
\end{proof}

\begin{remark}
Note that in the proof of the proposition above,
the heart of the total stability condition $\sigma$ is usually NOT $\mod \k Q$
for $Q=\overrightarrow{A_n}$ in \eqref{eq:An}.
Moreover, given any orientation of a quiver $Q$ of type $A_n$,
there always exits a total stability condition $\sigma$ with heart $\mod \k Q$ \cite{B}.
But this is not our focus.
\end{remark}

\begin{proposition}\label{pp:gd=}
Let $\sigma\in\Sss$ and $\mathbf{P}=\mathfrak{Z}(\sigma)$ be the corresponding $(n+1)$-gon.
Then
\begin{gather}\label{eq:gldim a}
    \gldim\sigma=\frac{1}{\pi}\max\{
    \arg\overrightarrow{P_jP_i}-\arg\overrightarrow{P_{i+1}P_{j+1}}
        \mid 0\le i<j\leq n \}
\end{gather}
where $P_{n+1}=P_0$ (cf. angle $\alpha$ in Figure~\ref{fig:tt} and angles in Figure~\ref{fig:7}).
\end{proposition}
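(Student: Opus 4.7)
The strategy is to identify the maximum on the right-hand side with the phase difference of specific pairs of indecomposables having non-zero Hom, then use Auslander-Reiten (Serre) duality to show these pairs attain the supremum defining $\gldim\sigma$. Since $\sigma$ is totally stable, every indecomposable is stable, so $\gldim\sigma$ is the supremum of $\phi_\sigma(F)-\phi_\sigma(E)$ over indecomposable pairs with $\Hom(E,F)\ne 0$. Because $\gldim\k\overrightarrow{A_n}=1$, only two shift-types of such Hom can arise: morphisms between modules in the same shift-class, and $\Ext^1$-classes $(E,G[1])$ with non-zero $\Ext^1(E,G)$ between modules.

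The decisive input is Serre duality $\Hom(X,Y)\cong D\Hom(Y,\tau X[1])$: for every indecomposable $E$, $\Ext^1(E,\tau E)\cong D\End(\tau E)\ne 0$, producing a canonical ``AR pair'' $(E,\tau E[1])$ with non-zero Hom. I would then show these pairs dominate both contributions to $\gldim\sigma$. In the shifted case, any non-zero $\Ext^1(E,G)$ yields via Serre duality a non-zero morphism $G\to\tau E$, so total semistability gives $\phi_\sigma(G)\le\phi_\sigma(\tau E)$ and hence $\phi_\sigma(G[1])-\phi_\sigma(E)\le\phi_\sigma(\tau E[1])-\phi_\sigma(E)$. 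In the unshifted case, Serre duality converts $\Hom(E,F)\ne 0$ into $\Hom(F,\tau E[1])\ne 0$, so total semistability again forces $\phi_\sigma(F)\le\phi_\sigma(\tau E[1])$ and $\phi_\sigma(F)-\phi_\sigma(E)\le\phi_\sigma(\tau E[1])-\phi_\sigma(E)$. Combining these two bounds with the trivial lower bound (each AR pair itself) yields
\[
    \gldim\sigma \;=\; \max_{E\in\Ind\D_\infty(A_n)}\bigl(\phi_\sigma(\tau E[1])-\phi_\sigma(E)\bigr).
\]

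It remains to translate each AR-pair contribution into the polygon expression. For a non-projective $E=M_{i+1,j}$ with $j<n$, $\tau E=M_{i+2,j+1}$ and \eqref{eq:Z phi} give $Z(E)=\overrightarrow{P_iP_j}$ and $Z(\tau E[1])=-\overrightarrow{P_{i+1}P_{j+1}}=\overrightarrow{P_{j+1}P_{i+1}}$; the phase difference $\phi_\sigma(\tau E[1])-\phi_\sigma(E)$ equals the angle in $[0,\pi]$ between these two vectors, which after choosing the branch of $\arg$ compatible with $\sigma$ rewrites as $\frac{1}{\pi}(\arg\overrightarrow{P_jP_i}-\arg\overrightarrow{P_{i+1}P_{j+1}})$. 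For the projective case $E=M_{i+1,n}$, the hereditary Serre functor gives $\tau E[1]=M_{1,i+1}$, whose central charge $\overrightarrow{P_0P_{i+1}}$ equals $-\overrightarrow{P_{i+1}P_{n+1}}$ under the convention $P_{n+1}=P_0$, producing the same formula for $j=n$; as $E$ ranges over all indecomposables, $(i,j)$ ranges over $0\le i<j\le n$. The main obstacle I anticipate is the consistency of the branch choice for $\arg$: since $\phi_\sigma$ is determined by $Z$ only modulo $2\mathbb{Z}$, one must argue once that total semistability pins down each AR-pair angle in $[0,\pi]$ rather than its supplement, and that this choice patches uniformly across all indecomposables, including at the boundary $j=n$ where the $\tau$-translate crosses a shift.
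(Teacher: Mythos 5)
Your overall strategy is exactly the paper's: use Serre/Auslander--Reiten duality $\Hom(E,F)\cong D\Hom(F,\tau E[1])$ together with total stability to reduce to
$\gldim\sigma=\max_E\bigl(\phi_\sigma(\tau E[1])-\phi_\sigma(E)\bigr)$ over indecomposables, then translate the AR pairs into diagonals of the polygon. Your dominance argument (any Hom-nonzero pair $(E,F)$ is bounded above by the AR pair $(E,\tau E[1])$, which itself is Hom-nonzero since $\Hom(E,\tau E[1])=D\End(E)$) is a correct and welcome expansion of the single line the paper devotes to this step.

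However, the final translation contains a concrete error: your identification $\tau M_{i+1,j}=M_{i+2,j+1}$ (and ``the projectives are $M_{i+1,n}$'') is the reverse of the conventions forced by Proposition~\ref{pp:A}, whose proof uses $\Hom(S_i,M_{i,i+1})\neq0$ and $\Hom(M_{i,i+1},S_{i+1})\neq0$; hence $S_i$ is the socle and $S_j$ the top of $M_{ij}$, the projectives are $M_{1j}$, and $\tau M_{ij}=M_{i-1,j-1}$ (with $\tau M_{1j}=M_{jn}[-1]$). With your direction of $\tau$ the bookkeeping fails: at the Gepner point, where $\mathbf{P}$ is the regular $(n+1)$-gon and $\phi_\sigma(S_{k+1})-\phi_\sigma(S_k)=\tfrac{2}{n+1}$, your AR pair $(S_1,\tau S_1[1])=(S_1,S_2[1])$ would have phase gap $1+\tfrac{2}{n+1}>1$, contradicting $\gldim\sigma_G=1-\tfrac{2}{n+1}$ and Lemma~\ref{lem:<=}; indeed $\Ext^1(S_1,S_2)=0$ in these conventions, and it is $\Ext^1(S_{i+1},S_i)$ that is nonzero. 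Once $\tau$ is corrected, $E=M_{i+2,j+1}$ has $Z(E)=\overrightarrow{P_{i+1}P_{j+1}}$ and $Z(\tau E[1])=\overrightarrow{P_jP_i}$, the projectives supply exactly the terms with $j=n$ via $P_{n+1}=P_0$, and the stated formula drops out; the branch of $\arg$ is then pinned down uniformly by the fact that each AR-pair gap lies in $[0,1]$ (total stability plus Lemma~\ref{lem:<=}), which settles the branch issue you flagged. So: right method, but the direction of $\tau$ (equivalently, which end of $M_{ij}$ is the socle) must be fixed for the computation to close.
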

\begin{proof}
By Auslander-Reiten duality, we have $\Hom(E,F)=D\Hom(F,\tau(E[1]))$,
where $\tau$ is the Auslander-Reiten functor.
As any indecomposable is stable, we have
\[
    \gldim\sigma=\max\{ \phi(\tau(E[1])-\phi(E) \mid \text{indecomposable $E$} \}.
\]
On the other hand, any indecomposable $E$ in $\D_\infty(A_n)$ is of the form $M_{ij}[m]$
for $1\le i\le j\le n$ and $m\in\ZZ$.
Then formula above becomes \eqref{eq:gldim a} using \eqref{eq:Z phi}.
\end{proof}
%=========================================================
\subsection{Calabi-Yau case}
%=========================================================
\begin{definition}
For a stability condition $\sigma$,
we say that an auto-equivalence $\Phi$ is $\sigma$-semistable if $\Phi(E)$ is $\sigma$-semistable
for every $\sigma$-semistable object $E$. If in addition
$\phi_\sigma(\Phi(E))-\phi_\sigma(E)=s$
for every $\sigma$-semistable object $E$,
then we say that $\phi_\sigma(\Phi)$ exists and equals $s$.
\end{definition}

Clearly, the shift functor $[1]$ is semistable with respect to any stability condition.
Denote by $\kong{S}$ the Serre functor of $\D$ (provided it exists),
that is the auto-equivalence in $\Aut\D$ satisfying
%\begin{equation}\label{eq:serre}
%    \mathfrak{S}\colon\Hom(L,M)
%        \xrightarrow{\sim}  \Hom (M,\kong{S}(L))^*.
%\end{equation}
\begin{equation}\label{eq:serre}
    \mathrm{S}\colon\Hom(L,M)
        \xrightarrow{\sim}  D\Hom (M,\kong{S}(L)).
\end{equation}

\begin{proposition}\label{pp:CY case}
Let $\sigma\in\Stab\D$.
If $\phi_\sigma(\kong{S})$ exists,
then $\gldim\sigma=\phi_\sigma(\kong{S})$.
\end{proposition}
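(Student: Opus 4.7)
The plan is to prove both inequalities $\gldim\sigma \le \phi_\sigma(\kong{S})$ and $\gldim\sigma \ge \phi_\sigma(\kong{S})$, using Serre duality in the form \eqref{eq:serre} together with the hypothesis that $\kong{S}$ is $\sigma$-semistable with a well-defined phase shift $s := \phi_\sigma(\kong{S})$.

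For the upper bound, I would take any pair of semistable objects $L \in \hh{P}(\phi_1)$ and $M \in \hh{P}(\phi_2)$ with $\Hom(L,M) \neq 0$. By Serre duality, $\Hom(M, \kong{S}(L)) \cong \Hom(L,M)^* \neq 0$. Since $\kong{S}$ is $\sigma$-semistable with phase shift $s$, the object $\kong{S}(L)$ is semistable of phase $\phi_1 + s$. Now axiom (c) in Definition~\ref{def:stab} applied to the non-zero hom space $\Hom(M, \kong{S}(L))$ forces $\phi_2 \le \phi_1 + s$, i.e.\ $\phi_2 - \phi_1 \le s$. Taking the supremum over all such pairs yields $\gldim\sigma \le s$.

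For the lower bound, I would pick any non-zero semistable object $L \in \hh{P}(\phi_1)$ and apply Serre duality with $M = L$ to get $\Hom(L, \kong{S}(L)) \cong \Hom(L,L)^*$, which is non-zero because it contains the dual of the identity. Since $\kong{S}(L)$ is semistable of phase $\phi_1 + s$, this witnesses a non-zero hom from phase $\phi_1$ to phase $\phi_1 + s$, so $\gldim\sigma \ge s$. Combining the two inequalities gives $\gldim\sigma = s = \phi_\sigma(\kong{S})$, and in passing we see that $\sigma$ is automatically $\gldim$-reachable under these hypotheses.

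There is no serious obstacle here: the argument is essentially a one-line application of Serre duality plus the semistability axiom, and the only subtlety is checking that the hypothesis $\phi_\sigma(\kong{S}) = s$ is used correctly, namely that it supplies both the \emph{semistability} of $\kong{S}(L)$ (needed to invoke axiom (c) in the upper bound) and the \emph{exact phase shift} by $s$ (needed for both bounds). If one only assumed $\kong{S}$-semistability without a uniform phase shift, the argument would still give $\gldim\sigma = \sup_L\{\phi_\sigma(\kong{S}(L)) - \phi_\sigma(L)\}$, but that supremum would be a priori less tractable.
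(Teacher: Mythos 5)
Your proposal is correct and follows essentially the same argument as the paper: both bounds come from Serre duality, with $\Hom(E,\kong{S}(E))\supseteq$ (dual of) the identity giving $\gldim\sigma\ge\phi_\sigma(\kong{S})$, and $\Hom(E,E')\ne0\Rightarrow\Hom(E',\kong{S}(E))\ne0$ combined with axiom (c) giving the reverse inequality. No meaningful difference from the paper's proof.
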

\begin{proof}
Since
\[
    0\ne D\Hom(E,E)=\Hom(E, \kong{S}(E))
\]
for any (semi)stable object $E$,
then
\[
    \phi_{\sigma}(\kong{S}(E))-\phi_{\sigma}(E)=\phi_\sigma(\kong{S})
\]
which implies $\gldim\sigma\ge \phi_\sigma(\kong{S})$.

On the other hand, for any semistable object $E'$
such that $\Hom(E,E')\ne0$, we have
\[
    \Hom(E',\kong{S}(E))=D\Hom(E,E')\neq0,
\]
which implies $\phi_{\sigma}(E')\le\phi_{\sigma}(\kong{S}(E))$.
Thus,
\[
    \phi_{\sigma}(E')-\phi_{\sigma}(E)\leq
    \phi_{\sigma}(\kong{S}(E))-\phi_{\sigma}(E)=\phi_\sigma(\kong{S})
\]
and $\gldim\sigma\le\phi_\sigma(\kong{S})$.
This completes the proof.
\end{proof}

We have the following immediate consequence.
\begin{corollary}\label{cor:CYN}
If $\hua{D}$ is Calabi-Yau-$N$ for some integer $N\ge 1$,
i.e. $\kong{S}=[N]$,
then $\phi_\sigma(\kong{S})=N$ for any $\sigma$.
Thus $\gldim\equiv N$ on $\Stab\D$.
\end{corollary}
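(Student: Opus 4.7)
The plan is to deduce the corollary as an immediate application of Proposition~\ref{pp:CY case}, so the only real work is to verify that $\phi_\sigma(\kong{S})$ exists and equals $N$ for every $\sigma\in\Stab\D$. Once this is done, Proposition~\ref{pp:CY case} hands us $\gldim\sigma=\phi_\sigma(\kong{S})=N$, and since $\sigma$ is arbitrary we obtain $\gldim\equiv N$.

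First I would observe that the shift $[1]$ is $\sigma$-semistable for every $\sigma$: axiom (b) of Definition~\ref{def:stab} says $\hh{P}(\phi+1)=\hh{P}(\phi)[1]$, so $[1]$ maps the subcategory $\bigcup_\phi\hh{P}(\phi)$ of semistable objects into itself, and moreover for any semistable $E\in\hh{P}(\phi)$ we have $E[1]\in\hh{P}(\phi+1)$, which means $\phi_\sigma(E[1])-\phi_\sigma(E)=1$, i.e.\ $\phi_\sigma([1])=1$. Iterating, $[N]=[1]^N$ is again $\sigma$-semistable and $\phi_\sigma([N])=N$.

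Next, since by hypothesis $\kong{S}=[N]$ as autoequivalences of $\D$, the invariant $\phi_\sigma(\kong{S})$ exists and equals $N$. Applying Proposition~\ref{pp:CY case} yields $\gldim\sigma=N$ for each $\sigma\in\Stab\D$, hence $\gldim\equiv N$ as a function on $\Stab\D$ (and, by the $\CC$- and $\Aut\D$-invariance of $\gldim$, on the quotient \eqref{eq:gldim} as well).

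There is essentially no obstacle here; the argument is a one-line check that shift functors act on phases by the expected integer, combined with the proposition just proved. The only thing to be slightly careful about is distinguishing the \emph{existence} of $\phi_\sigma(\kong{S})$ (a genuine hypothesis of Proposition~\ref{pp:CY case}) from its value, but in the Calabi--Yau-$N$ setting both are immediate because $\kong{S}$ is literally a shift.
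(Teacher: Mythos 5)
Your argument is correct and is essentially the paper's own proof: both verify that $\phi_\sigma(\kong{S})$ exists and equals $N$ because $\kong{S}=[N]$ is a shift (using axiom (b) of Definition~\ref{def:stab}), and then invoke Proposition~\ref{pp:CY case}. Your write-up just spells out the one-line check in more detail.
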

\begin{proof}
Since $\phi(E[1])=\phi(E)+1$,
$\phi_\sigma(\kong{S})$ exists and equals $N$.
Then the assertion follows from the proposition above.
\end{proof}

%One could consider the following modification of $\gldim$ on Calabi-Yau categories,
%which will be discussed in the last section.
A remark is that Serre-invariant stability conditions are studied in \cite{FP,PR},
where the uniqueness of such stability conditions is proven in certain cases.

%=========================================================
\section{Global dimension for Dynkin quiver}\label{sec:Dynkin}
%=========================================================
\subsection{Homotopy category of graded matrix factorizations}
%=========================================================
Consider a weighted graded polynomial ring
\begin{gather}\label{eq:A}
    A \colon= \CC[x_1, x_2, \cdots, x_m], \quad \deg x_i= a_i
\end{gather}
with $a_1 \ge a_2 \ge \cdots \ge a_m$.
Let $f \in A$ be a homogeneous element of degree $h$, known as superpotential
such that $(f=0) \subset \CC^n$ has an isolated singularity at the origin.
Denote by $P(k)$ the $k$-grading shifts of a graded $A$-module $P=\oplus_{i\in\ZZ} P_i$,
where $P(k)_i= P_{i+k}$.
\begin{definition}
A \emph{graded matrix factorization} of $f$ is a $\ZZ_2$-graded complex
\begin{gather}\label{eq:MF}
P^\bullet\colon\quad P^0 \stackrel{\,p^0\,}{\longrightarrow}  P^1 \stackrel{\,p^1\,}{\longrightarrow} P^0(h)
\end{gather}
where $P^i$ are graded free $A$-modules of finite rank and $p^i$
are homomorphisms of graded $A$-modules, satisfying the following conditions:
\begin{gather*}
\quad p^1 \circ p^0= f\cdot \id, \quad
p^0(h) \circ p^1= f\cdot \id.
\end{gather*}

\end{definition}
The category $\HMF(f)$ is defined to be the homotopy category of graded matrix factorizations of $f$:
whose objects consist of $\ZZ_2$-graded complexes \eqref{eq:MF} and
the set of morphisms are given by the commutative diagrams of graded $A$-modules
modulo null-homotopic morphisms.
The category $\HMF(f)$ is a triangulated category where
the shift $[1]$ of \eqref{eq:MF} is
\begin{gather}
    P^\bullet[1]\colon P^1 \stackrel{-p^1}{\longrightarrow} P^0(h) \xrightarrow{-p^0(h)} P^1(h)
\end{gather}
and the distinguished triangles are defined via the
usual mapping cone constructions, cf. \cite{KST, T1}.
The grading shift functor
$P^{\bullet} \mapsto P^{\bullet}(1)$
induces the autoequivalence $\RA$ of $\HMF(f)$ such that
\begin{align}\label{eq:RA}
    \RA^h =[2].
\end{align}
Furthermore,
there exists a Serre functor $\kong{S}_f$ on $\HMF(f)$:
\begin{align}\label{eq:Serre}
\kong{S}_f = \RA^{-\varepsilon}[m-2]
\end{align}
where $\varepsilon$ is the \textit{Gorenstein index} defined by
\begin{gather}
\varepsilon \colon= \sum_{i=1}^{m} a_i -h \in \mathbb{Z}.
\end{gather}

\begin{remark}
We reserve $\AR$ for Auslander-Reiten functor.
%In the Dynkin case (when $n=3$ and $\varepsilon<0$
\end{remark}

%=========================================================
\subsection{Toda's conjecture}
%=========================================================
Consider a central charge $Z_G$ on $\HMF(f)$, symbolically defined by
\begin{align}\label{eq:ZG}
    Z_G(P^\bullet)\colon=\mathrm{sTr}(e^{2\mathbf{i} \pi /h} \colon P^{\bullet} \to P^{\bullet})
\end{align}
for a graded matrix factorization $P^\bullet$ in \eqref{eq:MF}.
Here the $e^{2\pi \mathbf{i}/h}$-action is induced by the $\mathbb{Z}$-grading on each $P^i$
and $\mathrm{sTr}$ means the super trace of the $e^{2\pi \mathbf{i}/h}$-action
with respect to the $\mathbb{Z}/2\mathbb{Z}$-grading of $P^{\bullet}=P^0 \oplus P^1$.

\begin{conjecture}\label{conj:Toda}\cite{T1}
There is a stability condition $\sigma_G=(Z_G,\hh{P}_G)$ whose central charge is given by \eqref{eq:ZG}.
Moreover, it satisfies the Gepner equation with respect to $(\RA, 2/h)$ in $\Stab\HMF(f)$.
In other words, the equation
\begin{gather}\label{eq:RA=}
    \RA(\sigma)=\frac{2}{h} \cdot \sigma
\end{gather}
admits a solution $\CC\cdot\sigma_G$.
\end{conjecture}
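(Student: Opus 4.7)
The plan is to construct a candidate pair $(Z_G, \hh{P}_G)$ and then verify both the stability axioms and the Gepner equation $\RA(\sigma) = (2/h) \cdot \sigma$ simultaneously. The first observation is that the relation $\RA^h = [2]$ from \eqref{eq:RA} makes the equation self-consistent: iterating $h$ times produces $[2](\sigma) = 2 \cdot \sigma$, which always holds, so the eigenvalue $2/h$ is the only one compatible with this structural constraint. As a result, any $\RA$-equivariant slicing is determined by its restriction to a half-open phase interval of length $2/h$, and the central charge $Z_G$ defined by the super-trace \eqref{eq:ZG} automatically satisfies $Z_G \circ \RA^{-1} = e^{-2 \mathbf{i} \pi / h} \cdot Z_G$, which is precisely the compatibility required by the Gepner equation at the level of central charges.

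The construction then proceeds by pinning down a fundamental heart $\hh{H}_0 \subset \HMF(f)$, for instance via an Orlov-type equivalence with a derived category of graded modules, or in the Dynkin setting via $\HMF(f) \simeq \D^b(\k Q)$ with the heart supplied by the Kajiura-Saito-Takahashi construction. One declares the simples of $\hh{H}_0$ to be "seed" stable objects whose phases in $[0, 2/h)$ are prescribed by the arguments of their $Z_G$-values, and then uses $\RA$-equivariance to extend to all phases: the orbit $\{\RA^k E\}$ produces stable objects of phases $\phi(E) + 2k/h$, and $\RA^h = [2]$ guarantees that this orbit closes up compatibly with the shift. The stable objects in $\sigma_G$ are, by construction, exactly these $\RA$-orbits of seed simples.

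Verification of the axioms in Definition~\ref{def:stab} then runs as follows. Condition (a) is immediate from the definition of $Z_G$; condition (b) is built into the construction; condition (c), the vanishing of morphisms between slices of strictly decreasing phase, reduces by $\RA$-equivariance to a finite number of checks inside a fundamental domain of length $2/h$, which follow from the homological structure of $\hh{H}_0$ combined with the Serre functor formula \eqref{eq:Serre}. The HN-filtrations in (d) are obtained by combining HN-filtrations inside $\hh{H}_0$ (by length) with the t-structure filtration associated to $\hh{H}_0$, once one knows $\hh{H}_0$ is a length category whose simples match our seed. The support property reduces, once more by $\RA$-equivariance, to a norm comparison over finitely many semistable classes in the fundamental domain, and the $\CC$-action then yields the whole orbit $\CC \cdot \sigma_G$ of solutions.

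The main obstacle is producing the heart $\hh{H}_0$ with simples whose $Z_G$-arguments are distributed so as to match the phases dictated by the Gepner equation. In the Dynkin case this is exactly what KST provide, and here Proposition~\ref{pp:CY case} together with \eqref{eq:Serre} yields the sharp numerical prediction $\gldim \sigma_G = (m-2) - 2\varepsilon/h$, which can be checked directly against the explicit model; this same formula doubles as a sanity check in any case where a candidate is constructed. For general weighted $f$, the existence of $\hh{H}_0$ with the required phase alignment is genuinely delicate, and is the point at which the statement remains conjectural. A natural line of attack would be to deform along a family of superpotentials connecting the given $f$ to a Dynkin (or otherwise tractable) one, transporting the Gepner solution by openness of $\Stab$ and rigidity of the Gepner locus under the $\CC$-action; controlling the support property along such a deformation is where I expect the hardest analysis to lie.
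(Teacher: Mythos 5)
This statement is labelled as a \emph{conjecture} in the paper (due to Toda), and the paper itself contains no proof of it: the only resolution offered is the citation of Kajiura--Saito--Takahashi in Theorem~\ref{thm:KST}, which establishes the conjecture in the Dynkin case $m=3$, $\varepsilon>0$ via the equivalence $\HMF(f)\cong\DQ$. Your proposal is therefore correctly calibrated in spirit: it is a strategy sketch rather than a proof, and you are right to identify the construction of a heart $\hh{H}_0$ whose simples have $Z_G$-arguments aligned with the phases forced by $\RA$-equivariance as the genuinely open step for general weighted $f$. Your numerical consistency check $\gldim\sigma_G=(m-2)-2\varepsilon/h$ is correct (it follows from \eqref{eq:Serre} and Proposition~\ref{pp:CY case}) and specializes to the paper's value $1-2/h$ in the Dynkin case, where $\varepsilon=1$.

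Two cautions. First, your opening claim that $\RA^h=[2]$ forces $2/h$ to be ``the only eigenvalue compatible with this structural constraint'' is an overstatement: iterating $\RA(\sigma)=s\cdot\sigma$ gives $[2](\sigma)=hs\cdot\sigma$, which at the level of central charges only constrains $hs\equiv 2\pmod{2}$ with $s$ real, so $s=2k/h$ for various $k$ are formally admissible; the specific value $2/h$ comes from the transformation law of the super-trace $Z_G$ under the grade shift, not from $\RA^h=[2]$ alone. Second, the closing suggestion of deforming the superpotential and transporting the solution by openness of $\Stab$ is where the real analytic difficulty sits (controlling the support property and the Gepner locus along the family), and nothing in the paper supports that this can be carried out; as written it should not be presented as reducing the conjecture to a tractable problem. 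With those caveats, your sketch faithfully reconstructs what is known (the Dynkin case via KST) and honestly flags what is not.
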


%=========================================================
\subsection{Kajiura-Saito-Takahashi's solution in the Dynkin case}
%=========================================================
In this section, we focus on $m=3, \varepsilon>0$ case
where Kajiura-Saito-Takahashi \cite{KST} proved the triangle equivalence \eqref{eq:KST}
and Conjecture~\ref{conj:Toda}.

Let $Q$ be a Dynkin quiver, i.e. the underlying diagram of $Q$ is one of the following graphs
(with the labeling of the vertices)
\begin{gather}\label{eq:labeling}
\begin{array}{llr}
    \xymatrix{A_{n}:} \quad &
    \xymatrix{
        1 \ar@{-}[r]& 2 \ar@{-}[r]& \cdots \ar@{-}[r]& n }         \\
    \xymatrix@R=0.1pc{\\D_{n}:} \quad &
    \xymatrix@R=0.1pc{
        &&&& n-1 \\
        1 \ar@{-}[r]& 2 \ar@{-}[r]& \cdots \ar@{-}[r]& n-2 \ar@{-}[dr]\ar@{-}[ur]\\
        &&&& n &}                                                             \\
    \xymatrix{E_{6,7,8}:} \quad &
    \xymatrix@R=1pc{
        && 4 \\
        1 \ar@{-}[r]& 2 \ar@{-}[r]& 3 \ar@{-}[r]\ar@{-}[u]& 5 \ar@{-}[r]& 6
                \ar@{--}[r]& 7 \ar@{--}[r]& 8}
\end{array}
\end{gather}
Let $h=h_Q$ be the Coxeter number, which is
\[ h_Q=\begin{cases}
    n+1,    \quad &\text{if $Q$ if of type $A_n$};\\
    2(n-1),    \quad &\text{if $Q$ if of type $D_n$};\\
    12,18,30    \quad &\text{if $Q$ if of type $E_6,E_7,E_8$ respectively.};
\end{cases}\]
Denote by $\DQ=\D^b(\k Q)$ the bounded derived category of the path algebra $\k Q$
and by $\AR$ its Auslander-Reiten functor.
Let $(x,y,z)=(x_1,x_2,x_3)$, $(a,b,c)=(a_1,a_2,a_3)$.

\begin{theorem}\cite{KST}\label{thm:KST}
There are triangle equivalences
\begin{gather}\label{eq:KST}
    \Phi\colon\HMF(f)\cong\DQ,
\end{gather}
for $m=3, \varepsilon>0$,
\[
    (a,b,c;\,h)=\begin{cases}
        (1,b,n+1-b;\,n+1),& Q=A_n (n\ge1, 1\le b\le n);\\
        (n-2,2,n-1;\,2(n+1)),& Q=D_n, (n\ge4);\\
        (4,3,6;\,12), & Q=E_6;\\
        (6,4,9;\,18),& Q=E_7;\\
        (10,6,15;\,30),& Q=E_8;
   \end{cases}
\]
and
\[
    f(x,y,z)=\begin{cases}
        x^{n+1}+yz,& Q=A_n (n\ge1);\\
        x^2y+y^{n-1}+z^2,& Q=D_n, (n\ge4);\\
        x^3+y^4+z^2,& Q=E_6;\\
        x^3+x y^3+z^2,& Q=E_7;\\
        x^3+y^5+z^2,& Q=E_8;
   \end{cases}
\]
Moreover, $\Phi(\RA)=\AR^{-1}$,
the Serre functor in $\DQ$ is
\begin{gather}\label{eq:s ar h}
\kong{S}=[1]\circ\AR\quad\text{with}\quad\AR^h=[-2]\Leftrightarrow\kong{S}^{h}=[h-2].
\end{gather}
and
Conjecture~\ref{conj:Toda} holds.
\end{theorem}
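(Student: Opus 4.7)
The plan is to reduce the theorem to two core ingredients: (i) the construction of a tilting object in $\HMF(f)$ whose endomorphism ring is the path algebra $\k Q$, which then yields the equivalence $\Phi$ of \eqref{eq:KST}; and (ii) a direct bookkeeping of how the grade shift $\RA$ and the Serre functor $\kong{S}_f$ act on that tilting object. A preliminary observation driving everything is that in each of the five tabulated weight systems one has
\[
    \varepsilon = a+b+c-h = 1,
\]
so the Serre formula \eqref{eq:Serre} for $m=3$ collapses to $\kong{S}_f = \RA^{-1}\circ[1]$, which is precisely the shape required to match the classical Serre functor of $\DQ$.

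First I would realise $\HMF(f)$ as the graded singularity category $\underline{\mathrm{CM}}^{\ZZ}(A/f)$ via Buchweitz-Orlov. For ADE singularities this category has finitely many indecomposables, and their AR quiver, once the $\ZZ$-grading is recorded, is a connected subquiver of $\ZZ Q$ shaped like a fundamental domain under $\RA$. I would then select a \emph{slice}: a full subquiver indexed by the vertices of $Q$ with the incidence of $Q$, form the corresponding object $T=\bigoplus_{i\in Q_0} T_i$, and verify by a direct $\Hom$-computation that $\End_{\HMF(f)}(T)\cong \k Q$ and that $T$ generates. Standard tilting theory then upgrades this to a triangle equivalence $\Phi\colon\HMF(f)\xrightarrow{\sim}\DQ$ sending $T$ to the sum of indecomposable projectives of $\k Q$.

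With $\Phi$ in hand, the action of $\RA$ on the summands of $T$ is read off directly from the AR structure of $\underline{\mathrm{CM}}^{\ZZ}(A/f)$: $\RA$ moves each $T_i$ one step against the AR translate, and transporting via $\Phi$ yields $\Phi\circ\RA\circ\Phi^{-1}=\AR^{-1}$ on projectives, and hence on all of $\DQ$. Combined with \eqref{eq:RA} and \eqref{eq:Serre}, this gives both halves of \eqref{eq:s ar h}. Finally, to produce $\sigma_G$, I would place the slicing on the canonical heart $\mod\k Q$ by prescribing the phases and masses of its simples $S_i$ using a positive eigenvector of the Coxeter transformation associated to the primitive eigenvalue $\exp(2\pi\mathbf{i}/h)$; the phases are tuned so that $\AR(\sigma_G)=-(2/h)\cdot\sigma_G$ on $\DQ$, and the support property is automatic since $K(\DQ)$ has finite rank. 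Transporting back through $\Phi$ converts this equation into \eqref{eq:RA=} and the central charge into the supertrace \eqref{eq:ZG}, proving Conjecture~\ref{conj:Toda} in the Dynkin case.

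The principal obstacle is the first step: exhibiting the indecomposable graded matrix factorisations explicitly and identifying the endomorphism ring of a carefully chosen slice with $\k Q$ under the precise vertex labelling of \eqref{eq:labeling}. This is where the Dynkin combinatorics genuinely enters and must be handled type by type. Once this tilting identification is in place, the remaining assertions follow almost formally from $\varepsilon=1$ and the eigenvalue structure of the Coxeter element.
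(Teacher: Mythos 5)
This is a cited result (the paper imports Theorem~\ref{thm:KST} from \cite{KST} without proof), so your sketch is a reconstruction of the source. Its first half is essentially correct and does follow \cite{KST}: the observation that $\varepsilon=a+b+c-h=1$ in all five weight systems, the tilting/exceptional object in the graded singularity category with endomorphism ring $\k Q$, and the resulting bookkeeping $\Phi(\RA)=\AR^{-1}$, $\kong{S}=[1]\circ\AR$, $\AR^h=[-2]$ are exactly how the equivalence \eqref{eq:KST} and formula \eqref{eq:s ar h} are obtained.

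The gap is in your construction of $\sigma_G$, i.e.\ in the proof of Conjecture~\ref{conj:Toda}. You propose to put the slicing on the canonical heart $\mod\k Q$ with central charge an eigenvector of the Coxeter transformation. This fails already for $\overrightarrow{A_n}$ with $n\ge3$: by Proposition~\ref{pp:A} the values $Z_G(S_i)=\overrightarrow{P_{i-1}P_i}$ are consecutive edges of a regular $(n+1)$-gon, whose arguments spread over an arc of length $\tfrac{n-1}{n+1}\cdot 2\pi\ge\pi$, so no rotation places all simples of $\mod\k Q$ in the semi-closed upper half plane; the heart of $\sigma_G$ is a nontrivial tilt of the canonical one. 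More seriously, even after choosing a heart compatible with $Z_G$, one only obtains \emph{some} stability condition with that central charge: the Gepner equation \eqref{eq:RA=} is a condition on the slicing, $\AR(\hh{P})=\hh{P}_{-2/h}$, and this does not follow formally from the eigenvector identity $Z_G\circ\AR=e^{-2\pi\mathbf{i}/h}Z_G$. The substantive step in \cite{KST} --- reproduced for type $A$ in Proposition~\ref{pp:A} --- is to declare every indecomposable stable of phase $\tfrac{1}{\pi}\arg Z_G$ and verify axiom (c) of Definition~\ref{def:stab}, namely that $\Hom(E,F)\neq0$ forces $\arg Z_G(E)\le\arg Z_G(F)$; only then is the $\AR$-equivariant slicing defined. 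Finally, the support property is not ``automatic from finite rank'': it requires $Z_G(\alpha)\neq0$ for every root $\alpha$, which holds because no root projects to zero in the Coxeter plane, but this must be checked rather than assumed.
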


\begin{remark}
When $Q$ is of type $A_n$,
the corresponding polygon is a regular $(n+1)$-gon
(see Proposition~\ref{pp:A})
and any angle $\arg\overrightarrow{P_jP_i}-\arg\overrightarrow{P_{i+1}P_{j+1}}$
in \eqref{eq:gldim a} equals $\frac{n-1}{n+1}\pi$, cf. Figure~\ref{fig:7} for $n=6$ case.
\end{remark}

\begin{figure}[ht]\centering
\begin{tikzpicture}[xscale=.7,yscale=.7,rotate=90]
\foreach \j in {1,...,7}{
\draw[blue!5,very thick] (360/7*\j:4) to (360/7*\j+360/7*3:4);}
\foreach \j in {1,...,7}{
\draw[blue!40,thick] (360/7*\j:4) to (360/7*\j+360/7*2:4);}
\foreach \j in {1,...,7}{
\draw[blue!99,thick] (360/7*\j:4) to (360/7*\j+360/7*1:4);}
\foreach \j in {1,...,7}{
\draw[red,->,>=stealth] (360/7*\j-5:3.82) to[bend left] (360/7*\j+5:3.82);
\draw[red,->,>=stealth] (360/7*\j-12:.9) to[bend left] (360/7*\j+12:.9);
\draw[red,->,>=stealth] (360/7*\j-7+360/14:2.63) to[bend left] (360/7*\j+7+360/14:2.63);}
\end{tikzpicture}
\caption{Regular heptagon for $\sigma_G$ in $A_6$ case and angles equals $\pi\gldim\sigma_G$}\label{fig:7}
\end{figure}

%=========================================================
\subsection{Uniqueness}
%=========================================================
Define
\begin{gather}
    \cd=1-\frac{2}{h}.
\end{gather}
We will show that $\gldim\DQ=\cd$.
Moreover, we will prove the uniqueness of the solution of \eqref{eq:RA=} (cf. \cite[\S~2.9]{T1} for the uniqueness)
and the uniqueness of the solution of equation $\gldim\sigma=\cd$, up to the $\CC$-action.

\begin{lemma}\label{lem:gldim G}
$\gldim\sigma_G=\cd$ for a Dynkin quiver $Q$.
\end{lemma}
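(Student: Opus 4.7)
The plan is to invoke Proposition~\ref{pp:CY case}, which identifies $\gldim\sigma$ with the phase shift $\phi_\sigma(\kong{S})$ whenever the Serre functor is $\sigma$-semistable with a well-defined phase. For $\sigma_G$, the inputs to that proposition will be supplied by the Gepner equation itself together with the Serre-functor formula \eqref{eq:s ar h}.

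First I would unwind the Gepner equation $\AR(\sigma_G)=(-2/h)\cdot\sigma_G$ at the level of slicings. By the definition of the $\CC$-action $s\cdot(Z,\hh{P})=(Z e^{-\mathbf{i}\pi s},\hh{P}_{\Re(s)})$ and the definition $\AR(\hh{P}_G)(\phi)=\AR(\hh{P}_G(\phi))$, the equation is equivalent to
\[
\AR(\hh{P}_G(\phi))=\hh{P}_G\!\left(\phi-\tfrac{2}{h}\right)\quad\text{for all }\phi\in\RR.
\]
In particular $\AR$ preserves the subcategory of semistable objects, and every such object has its phase shifted by exactly $-2/h$. Hence $\AR$ is $\sigma_G$-semistable with $\phi_{\sigma_G}(\AR)=-2/h$.

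Second, I would combine this with the factorization $\kong{S}=[1]\circ\AR$ recorded in \eqref{eq:s ar h}. The shift functor is $\sigma_G$-semistable with $\phi_{\sigma_G}([1])=1$, and phase shifts of $\sigma_G$-semistable autoequivalences add under composition, so
\[
\phi_{\sigma_G}(\kong{S})=\phi_{\sigma_G}([1])+\phi_{\sigma_G}(\AR)=1-\frac{2}{h}=\cd.
\]
Applying Proposition~\ref{pp:CY case} then yields $\gldim\sigma_G=\phi_{\sigma_G}(\kong{S})=\cd$, as claimed.

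The whole argument is very short. The only point to be scrupulous about is the sign convention in passing from the Gepner equation to the identity $\phi_{\sigma_G}(\AR)=-2/h$; everything else is automatic. I do not expect a serious obstacle here, since uniqueness of $\sigma_G$ up to the $\CC$-action and the fact that $\cd$ is actually the minimum of $\gldim$ are separate statements treated elsewhere, while in this lemma we only need to \emph{evaluate} $\gldim$ at the one stability condition $\sigma_G$.
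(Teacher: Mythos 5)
Your proposal is correct and follows essentially the same route as the paper: both use the Gepner equation to deduce $\kong{S}(\sigma_G)=\cd\cdot\sigma_G$ via $\kong{S}=[1]\circ\AR$, hence $\phi_{\sigma_G}(\kong{S})=\cd$, and then conclude by Proposition~\ref{pp:CY case}. Your version just spells out the slicing-level sign check that the paper leaves implicit.
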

\begin{proof}
Since $\AR(\sigma_G)=\RA^{-1}(\sigma_G)=(-2/h)\cdot\sigma_G$,
we have $\kong{S}(\sigma_G)=\cd\cdot\sigma_G$,
which implies that $\phi_{\sigma_G}(\kong{S})=\cd$.
Then the lemma follows from Proposition~\ref{pp:CY case}.
\end{proof}

\begin{theorem}\label{thm:last}
Up to the $\CC$-action, $\sigma_G$ is the unique solution of \eqref{eq:RA=}, or equivalently,
\begin{gather}\label{eq:GP}
    \tau(\sigma)=(-\frac{2}{h})\cdot\sigma.
\end{gather}
Moreover, the minimal value of $\gldim$ on $\Stab\DQ$ is $\cd$
and $\sigma_G$ is also the unique solution of $\gldim\sigma=\cd$ up to the $\CC$-action.
\end{theorem}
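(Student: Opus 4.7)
The plan is to prove the chain of equivalences
$$\sigma\text{ solves }\eqref{eq:GP}\;\Longleftrightarrow\;\gldim\sigma=\cd\;\Longleftrightarrow\;\sigma\in\CC\cdot\sigma_G,$$
all three properties being witnessed by $\sigma_G$ via Lemma~\ref{lem:gldim G}.

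First I would establish the lower bound $\gldim\sigma\geq\cd$ for every $\sigma\in\Stab\DQ$. If $\sigma$ is not totally stable then Proposition~\ref{pp:leq} already gives $\gldim\sigma\geq 1>\cd$, so one reduces to the totally stable case. There every indecomposable $E$ and every $\kong{S}(E)$ is stable, and Serre duality $\Hom(E,\kong{S}(E))\cong D\End(E)\neq 0$ combined with axiom~(c) forces $\phi_\sigma(\kong{S}(E))-\phi_\sigma(E)\in[0,\gldim\sigma]$. Using $\kong{S}^h=[h-2]$ from \eqref{eq:s ar h} I would telescope along the Serre orbit of any stable $E$:
\begin{equation*}
\sum_{i=0}^{h-1}\bigl(\phi_\sigma(\kong{S}(\AR^i E))-\phi_\sigma(\AR^i E)\bigr)=\phi_\sigma(E[h-2])-\phi_\sigma(E)=h-2,
\end{equation*}
so a sum of $h$ non-negative terms each bounded by $\gldim\sigma$ yields $\gldim\sigma\geq(h-2)/h=\cd$.

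Next I would observe that equality $\gldim\sigma=\cd$ forces each of those $h$ summands to equal $\cd$, i.e. $\phi_\sigma(\AR E)-\phi_\sigma(E)=-2/h$ uniformly for every stable $E$. Since a totally stable $\sigma$ is determined by the phases and masses of its indecomposables, this uniform shift is precisely the assertion $\AR(\sigma)=-(2/h)\cdot\sigma$ of \eqref{eq:GP}, so every minimiser of $\gldim$ solves the Gepner equation. Conversely any solution of \eqref{eq:GP} has $\phi_\sigma(\kong{S})=\cd$, and Proposition~\ref{pp:CY case} gives $\gldim\sigma=\cd$; thus the minimisers and the Gepner solutions coincide.

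Finally I would establish the $\CC$-uniqueness. A Gepner $\sigma$ has central charge satisfying $Z\circ\AR=e^{-2\pi\mathbf{i}/h}\cdot Z$, so $Z$ lies in the $e^{-2\pi\mathbf{i}/h}$-eigenspace of the Coxeter transformation acting on $\Hom(K(\DQ),\CC)$. For each irreducible Dynkin root system the exponents $m_1<\cdots<m_n$ include $m_n=h-1$ with multiplicity one, so this eigenspace is one-dimensional; hence $Z$ is determined up to a non-zero complex scalar, and that scalar is absorbed by the $\CC$-action on $\Stab\DQ$. Because $\sigma$ is totally stable, fixing $Z$ then pins down each indecomposable's phase modulo $2\ZZ$, and the residual integer ambiguity is absorbed by the $\CC$-action as well; so $\sigma\in\CC\cdot\sigma_G$.

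\textbf{Main obstacle.} The crux is the telescoping argument of the first step: it depends on $\kong{S}$ preserving the stable locus phase-by-phase, which is guaranteed only after the dichotomy of Proposition~\ref{pp:leq} reduces us to totally stable $\sigma$. The eigenspace-multiplicity input of the last step is classical but genuinely type-dependent, and it is the combination of this one-dimensionality with the slicing rigidity at totally stable points that yields the uniqueness of $\sigma_G$ up to the $\CC$-action.
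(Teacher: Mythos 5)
Your first step (the dichotomy via Proposition~\ref{pp:leq} followed by telescoping along the Serre orbit using $\kong{S}^h=[h-2]$) is exactly the paper's computation, and it correctly yields $\gldim\sigma\ge\cd$ and, at equality, the uniform phase shift $\phi_\sigma(\AR E)-\phi_\sigma(E)=-2/h$ for every indecomposable $E$. The genuine gap is the sentence asserting that this uniform shift ``is precisely'' the Gepner equation \eqref{eq:GP}. The phase condition only gives the slicing half, $\AR(\hh{P})=\hh{P}_{-2/h}$ (the paper's \eqref{eq:tau p}); the equation $\AR(\sigma)=(-\frac{2}{h})\cdot\sigma$ additionally requires $Z\circ\AR^{-1}=e^{2\pi\mathbf{i}/h}\cdot Z$, i.e.\ that the masses $|Z(E)|$ are $\AR$-invariant, and nothing in your argument forces this: two stability conditions can share a slicing while having different central charges. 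This is exactly where the paper does its real work. It takes a leaf $\tau$-orbit $\{L_i\}$ and its neighbouring orbit $\{K_i\}$, uses the Auslander--Reiten triangles $L_i\to K_i\to L_{i+1}\to L_i[1]$ together with total stability to set up the trichotomy \eqref{eq:cases} comparing $|Z(L_i)|$ with $|Z(L_{i+1})|$, and then invokes the periodicity $Z(L_{i+h})=Z(L_i[2])=Z(L_i)$ to force the constant-modulus case; this simultaneously proves mass-invariance and, by propagating through the AR quiver, pins down $Z$ (hence $\sigma$) up to the $\CC$-action. Your proposal contains no substitute for this step.

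Your third step cannot repair the gap, because it presupposes that $\sigma$ already satisfies the Gepner equation before invoking the Coxeter eigenspace. As a route to $\CC$-uniqueness \emph{among genuine Gepner solutions}, however, it is a nice alternative to the paper's propagation argument: the exponents $1$ and $h-1$ occur with multiplicity one in every irreducible Dynkin type, so the relevant eigenspace is one-dimensional and the central charge of a Gepner solution is determined up to a scalar absorbed by the $\CC$-action. (Even there you should justify why fixing $Z$ determines the phases globally rather than merely modulo $2\ZZ$ object by object, e.g.\ via connectivity of the AR quiver and the constraint $\phi_\sigma(E)\le\phi_\sigma(F)$ whenever $\Hom(E,F)\neq0$.) The essential missing ingredient remains the passage from the phase identity to the full central-charge identity.
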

\begin{proof}
First, we show that any solution $\sigma$ of \eqref{eq:GP} satisfies $\gldim\sigma\le\cd$.
In fact, for any two semistable objects $E, F$ with $\Hom(E,F)\ne 0$,
we have $\Hom(F,\kong{S}(E))\ne0$. On the other hand, \eqref{eq:GP} implies
that $\kong{S}(E)=\tau(E[1])$ is also semistable.
Thus we have
$$\phi_\sigma(E)\le \phi_\sigma(F)\le\phi_\sigma(\kong{S}(E))=\phi_\sigma(E)+1-\frac{2}{h},$$
which implies $\phi_\sigma(F)-\phi_\sigma(E)\le1-2/h$. Thus $\gldim\sigma\le\cd$.

Now consider any $\sigma=(Z,\hh{P})$ with $\gldim\sigma\le\cd$.
We only need to show that it must be $\sigma_G$, up to the $\CC$-action, to complete the proof.

Since $\gldim\sigma\le\cd<1$,
Proposition~\ref{pp:leq} says that any indecomposable object is stable with respect to $\sigma$.
For any $E\in\Ind\DQ$, we have
$$\Hom( \kong{S}^{i-1}(E), \kong{S}^{i}(E) )\neq0\quad\text{for $i=1,\ldots,h$.}$$
Thus we have
\[\begin{array}{rcl}
    h\cdot\gldim\sigma &\geq&\displaystyle{\sum_{i=1}^h}\left(
        \phi_\sigma( \kong{S}^i(E) )-\phi_\sigma( \kong{S}^{i-1}(E) )
    \right)\\
    &=& \phi_\sigma( \kong{S}^h(E) ) )-\phi_\sigma( E )\\
    &=& \phi_\sigma( E[h-2] ) )-\phi_\sigma( E )
    \\&=& h-2,
\end{array}\]
where we use \eqref{eq:s ar h}.
Hence for any $E$ we have $\phi_\sigma( \kong{S}(E) )-\phi_\sigma(E)=1-2/h$,
i.e.
\begin{gather}\label{eq:tau p}
    \kong{S}(\hh{P})=\hh{P}_{1-2/h}\quad\text{or}\quad\tau(\hh{P})=\hh{P}_{-2/h},
\end{gather}
where $\hh{P}_x(\phi)=\hh{P}(\phi+x)$.
Thus $\gldim\sigma=\cd$.
We process to show that $\sigma$ is unique up to the $\CC$-action and satisfies \eqref{eq:GP}.

By Proposition~\ref{pp:leq}, all indecomposables are stable with respect to $\sigma$.
Consider a leaf $l$ (vertex with only one arrow in or out) of $Q$,
which corresponds to a boundary $\tau$-orbit in the Auslander-Reiten quiver of $\DQ$.
For instance, in Section~\ref{sec:A}, the leaf 1 in \eqref{eq:An}
corresponds to the last (red) dashed line in \eqref{eq:AR An}.
Denote the objects in this $\tau$-orbit by $L_i,\;i\in\ZZ$ such that $\tau(L_{i+1})=L_i$
(and hence $L_{i+h}=L_i[2]$).
Denote the objects in its neighbour $\tau$-orbit by $K_i,\; i\in\ZZ$ such that $\tau(K_{i+1})=K_i$
so that there are Auslander-Reiten triangles
\[
    L_i\to K_i\to L_{i+1} \to L_i[1].
\]
By \eqref{eq:tau p}, we have
\begin{gather}\label{eq:=2/h}
    \phi_\sigma(L_{i+1})-\phi_\sigma(L_i)=2/h=\phi_\sigma(K_{i+1})-\phi_\sigma(K_i)
\end{gather}
for any $i\in\ZZ$.
The triangles above imply that
$Z(K_i)=Z(L_i)+Z(L_{i+1})$
and they are all stable, which means that
$\phi_\sigma(K_i)$ is in the interval $$(\phi_\sigma(L_i),\phi_\sigma(L_{i+1}))=(\phi_\sigma(L_{i}), \phi_\sigma(L_{i+1})+2/h ).$$
Therefore
\begin{gather}\label{eq:cases}\begin{cases}
    |Z(L_i)|>|Z(L_{i+1})| \Longleftrightarrow \phi_\sigma(K_i)<\phi_\sigma(L_{i+1})+1/h,\\
    |Z(L_i)|=|Z(L_{i+1})| \Longleftrightarrow \phi_\sigma(K_i)=\phi_\sigma(L_{i+1})+1/h,\\
    |Z(L_i)|<|Z(L_{i+1})| \Longleftrightarrow \phi_\sigma(K_i)>\phi_\sigma(L_{i+1})+1/h.
\end{cases}\end{gather}
and one of these cases holds for all $i$ noticing the periodicity \eqref{eq:=2/h}.
However, $Z(L_{i+h})=Z(L_i[2])=Z(L_i)$.
This forces that the second case in \eqref{eq:cases} occurs.
Thus $\{Z(L_i),\phi_\sigma(L_i)\}$ satisfies \eqref{eq:GP},
which also determines $\{Z(K_i),\phi_\sigma(K_i)\}$.
Note that $\{Z(K_i),\phi_\sigma(K_i)\}$ also satisfies \eqref{eq:GP},
and determines $\{Z(L_i),\phi_\sigma(L_i)\}$.

Recall that $L_i$ corresponds to a leaf $l$ in $Q$ and
then $K_i$ corresponds to an adjacent vertex $k$ in $Q$.
If $k$ is not a trivalent vertex, let $j\ne l$ be the other adjacent vertex of $k$.
It is straightforward to see that the central charges and phases of objects
in the $\tau$-orbit corresponding to $j$
are determined by $\{Z(L_i),\phi_\sigma(L_i)\}$ and satisfy \eqref{eq:GP}.
We can keep this process untill we reach a trivalent vertex of $Q$.

As we can apply this analysis to any leaves in $Q$, we deduce that
the central charges and phases of objects in any given $\tau$-orbit of
the Auslander-Reiten quiver of $\DQ$ satisfy \eqref{eq:GP} and
they determine $\sigma=(Z,\hh{P})$ uniquely.
Therefore, up to $\CC$-action, $\sigma$ is unique.
By Lemma~\ref{lem:gldim G}, we conclude that $\sigma=s\cdot\sigma_G$
for some $s\in\CC$.
\end{proof}

%=========================================================
\subsection{Range of $\gldim$ in the Dynkin case}
%=========================================================
We proceed to calculate the range of
the global dimension function \eqref{eq:gldim} on $\Stab\DQ$.
Note that as $\Ind\DQ/[1]$ is finite,
$\gldim\sigma<+\infty$ for any $\sigma$.

\begin{theorem}\label{thm:Dynkin}
The range of $\gldim$ on $\Stab\DQ/\CC$ is $[\cd, +\infty)$.
Moreover, (the orbit of) $\sigma_G$ is the unique point that achieves the minimal value $\cd$.
\end{theorem}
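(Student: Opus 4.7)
My starting point is Theorem~\ref{thm:last}, which asserts that $\gldim\sigma\ge\cd$ on $\Stab\DQ$, with equality realized uniquely (up to the $\CC$-action) by $\sigma_G$. The only remaining task is to show that every value $v\in[\cd,+\infty)$ is attained. Since $\gldim$ is continuous and $\Stab\DQ/\CC$ is path-connected for $Q$ Dynkin (classical, cf.~\cite{B1}), the image of $\gldim$ is a connected subset of $\RR_{\ge0}$, hence an interval with left endpoint $\cd$. It suffices to prove this interval is unbounded above.

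\textbf{A family of hearts of large Ext-dimension.} Assume $n\ge 2$ (the $n=1$ case is trivial as $\gldim\equiv 0=\cd$). I pick a sink $j$ of $Q$ and a vertex $l$ with arrow $l\to j$, so that $\Ext^{1}(S_l,S_j)\ne 0$ in $\mod\k Q$. For each $k\ge 0$, consider
\[
    \mathcal{T}^{(k)}\colon=\{S_{l'}\mid l'\ne j\}\cup\{S_j[-k]\}\subset\DQ.
\]
The key claim is that $\mathcal{T}^{(k)}$ is a simple-minded collection. The only delicate check is that $\Hom_{\DQ}(S_j[-k],S_{l'}[m])=\Ext^{m+k}(S_j,S_{l'})$ vanishes for $m<0$: for $m+k\le 0$ this is immediate (negative Ext, or distinct simples), and for $m+k=1$ it follows from $j$ being a sink, since no outgoing arrow from $j$ forces $\Ext^{1}(S_j,-)=0$; higher cases vanish by heredity. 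Let $\h_k$ be the heart generated by $\mathcal{T}^{(k)}$; a direct computation gives
\[
    \Ext^{k+1}_{\h_k}(S_l,S_j[-k])=\Hom_{\DQ}(S_l,S_j[1])=\Ext^{1}(S_l,S_j)\ne 0.
\]

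\textbf{Unboundedness and conclusion.} For each $k\ge 0$ and each $\varepsilon\in(0,\tfrac{1}{2})$, choose $\sigma_{k,\varepsilon}=(Z,\h_k)\in\Stab\DQ$ with $\phi_\sigma(S_l)=\varepsilon$, $\phi_\sigma(S_j[-k])=1-\varepsilon$, and arbitrary valid phases in $(0,1]$ on the remaining simples. The non-vanishing Ext above then forces
\[
    \gldim\sigma_{k,\varepsilon}\ge \phi_\sigma(S_j[-k])+(k+1)-\phi_\sigma(S_l)=k+2-2\varepsilon,
\]
which tends to $+\infty$ as $k\to\infty$. Combined with the connected image of $\gldim$ and the lower bound of Theorem~\ref{thm:last}, this yields $\gldim(\Stab\DQ/\CC)=[\cd,+\infty)$; the uniqueness of $\sigma_G$ at the minimum is already in Theorem~\ref{thm:last}.

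\textbf{Main obstacle.} The technical crux is the SMC verification for $\mathcal{T}^{(k)}$, which is short but uses the sink hypothesis essentially; without it the simples $S_j$ with outgoing arrows would produce non-zero $\Hom(S_j[-k],S_{l'}[m])$ in negative degrees, destroying the SMC property. To avoid citing path-connectedness of $\Stab\DQ/\CC$, one can alternatively construct an explicit continuous path from $\sigma_G$ to each $\sigma_{k,\varepsilon}$ via wall-crossings through the chambers corresponding to $\h_0,\h_1,\ldots,\h_k$ and invoke the intermediate value theorem along that path.
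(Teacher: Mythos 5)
Your proposal is correct and follows the same skeleton as the paper's proof: reduce to Theorem~\ref{thm:last} for the lower bound and uniqueness, use continuity of $\gldim$ plus connectedness of $\Stab\DQ$ to get an interval, and then exhibit hearts whose Ext-quivers have arbitrarily large degrees to show the interval is unbounded. The one place where you genuinely diverge is in how those hearts are produced. The paper invokes the iterated simple HRS-tilting machinery of \cite{KQ} and \cite{Q2} to assert abstractly that Ext-quivers of hearts in $\DQ$ can have arbitrarily large degrees; you instead write down the explicit collections $\mathcal{T}^{(k)}=\{S_{l'}\mid l'\ne j\}\cup\{S_j[-k]\}$ at a sink $j$ and verify the simple-minded-collection axioms by hand (your case analysis on $\Ext^{m+k}(S_j,S_{l'})$ is correct, and the sink hypothesis is indeed exactly what kills the $m+k=1$ case). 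Your version is more self-contained and gives an explicit lower bound $k+2-2\varepsilon$, at the cost of one point you leave implicit: the passage from ``$\mathcal{T}^{(k)}$ is a simple-minded collection'' to ``its extension closure $\h_k$ is the heart of a bounded t-structure with these objects as its simples'' is a theorem (Koenig--Yang for finite-dimensional algebras, or the corresponding statements in \cite{KQ}), and should be cited rather than taken for granted. Two further minor remarks: simples of the heart $\mathcal{P}(0,1]$ are automatically stable, which is what licenses the inequality $\gldim\sigma_{k,\varepsilon}\ge\phi_\sigma\bigl((S_j[-k])[k+1]\bigr)-\phi_\sigma(S_l)$, so it is worth saying; and your exclusion of $n=1$ is apt, since for $A_1$ the range degenerates to $\{0\}$ (a boundary case the paper's statement and proof also pass over).
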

\begin{proof}
By Theorem~\ref{thm:last}, $\gldim$ has minimal value $\cd$,
which is achieved by a unique stability condition in $\Stab\DQ/\CC$.

Note that $\gldim$ is continuous (cf. Section~\ref{sec:gdf}) and $\Stab\DQ$ is connected (cf. e.g. appendix of \cite{Qiu}), we only need to show that
there exists $\sigma$ with $\gldim\sigma$ arbitrary big.
This follows from the following facts:
\begin{itemize}
\item A heart $\h$ in $\DQ$ and a central charge $Z$ on its simples with values
in the upper half plane
\[
    H=\{r\exp(\mathrm{i} \pi \theta)\mid r\in\kong{R}_{>0},0\leq \theta<1 \}
        \subset\kong{C}
\]
determine a stability condition $\sigma$.
\item By definition, an arrow $S\xrightarrow{d}S'$ with degree $d$ in the Ext-quiver of a heart $\h$ corresponds to an element of a basis of $\Hom(S,S'[k])$.
Thus, $$\gldim\sigma\ge \phi_\sigma(S'[k])-\phi_\sigma(S) >k-1$$ for any $\sigma$ with heart $\h$.
\item Recall from \cite[Def.~3.7 and Def.~5.11]{KQ} there is the notion of
iterated simple (HRS-)tilting.
By \cite[Thm.~5.9]{KQ} and \cite[Thm~A.6]{Q2}
one can iterated forward tilt a simple of any heart in $\DQ$.
Hence the corresponding Ext quiver of the heart $\h$ can have arbitrary large degrees.
\end{itemize}
\end{proof}

The fact that the $\gldim$ can be arbitrary large
can be also deduced from \cite[Lem.~3.19]{M} easily.

\begin{example}
In the case when $Q$ is $A_2$ quiver,
the description of $\Stab\D_\infty(A_2)$ is explicit in \cite{BQS, Qiu}.
The shadow area in Figure~\ref{fig:A2} is
the fundamental domain for $\Aut\backslash\Stab\D_\infty(A_2)/\CC$.
Note that the ratio of the central charges of two of the semistable objects
equals $e^{\pi\mathbf{i}z}$ for $z=x+y\mathbf{i}$ and
$$l_{\pm}=\{ z \mid  x\in(\frac{1}{2},\frac{2}{3}],
        y\pi=\mp\ln(-2\cos x\pi)\}$$
are the boundaries.
And we have the formula to calculate $\gldim\sigma$ via $x$-axis in the fundamental domain:
\[
    x=1-\gldim\sigma.
\]
\end{example}

\begin{figure}\centering
\begin{tikzpicture}[yscale=.35,xscale=.7]
\path (0,0) coordinate (O);
\path (4,0) coordinate (A);
\path (3+0.2,10) coordinate (m1);
\path (3+0.2,-10) coordinate (m2);
\draw[white, fill=gray!14] (m1)
    .. controls +(-90:3) and +(120:1) .. (A)
    .. controls +(-120:1) and +(90:3).. (m2)
   to [out=180,in=0] (-5,-10)
   .. controls +(90:3) and +(-90:1) ..  (-5,10) ;
\draw[white,thick] (m2) edge (-5,-10);
\draw[dotted,->,>=latex] (-6,0) -- (8,0) node[right]{$x=1-\gldim\sigma$};
\draw[dotted,->,>=latex] (0,-10) -- (0,10) node[above]{$y$};
\draw (0,0) node[below left]{$0$};
\path[dotted] (6,-10) edge (6,10);
\path[dotted] (3,-10) edge (3,10);
\draw[red,thick] (A) edge (6,0);
\draw[red,thick] (m1) .. controls +(-90:3) and +(120:1) .. (A);
\draw[red,thick] (A)  .. controls +(-120:1) and +(90:3).. (m2);
\draw (6,0) node[below right] {$1$};
\path (4.5,0) node (a) {};
\path (3.6,1.5) node (b) {};
\path (3.6,-1.5) node (c) {};
\path[->,>=stealth,orange, bend right](a) edge (b);
\path[->,>=stealth,orange, bend right] (b) edge (c);
\path[->,>=stealth,orange, bend right] (c) edge (a);
\path (4,6) node {$l_+$};\path (4,-6) node {$l_-$};
\draw[red] (4,0) node {{$\bullet$}};
\path (4,0) node[below] {{$\frac{2}{3}$}};
\path (-6,0) node[left] {$-\infty$};
\draw[fill=white] (6,0) node[white] {$\bullet$} node {$\circ$};
\end{tikzpicture}
\caption{$\Aut\backslash\Stab\D_\infty(A_2)/\CC = \big(\Stab\D_\infty(A_2)/\CC\big)/C_3$}\label{fig:A2}
\end{figure}

%=========================================================
\section{Global dimension for hereditary case}\label{sec:here}
%=========================================================
A triangulated category $\D$ is hereditary if it admits a heart $\h$ such that
\begin{equation}\label{eq:here}
    \Hom^{\ge2}(\h,\h)=0.
\end{equation}
We call a heart $\h$ of $\D$ hereditary if \eqref{eq:here} holds.
The following is straightforward.

\begin{lemma}\label{lem:le1}
Let $\h$ be a hereditary heart of $\D$.
If $\h$ is finite (i.e. is a length category with finitely many simple objects),
then $\gldim\D\le1$.
\end{lemma}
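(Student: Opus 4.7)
The plan is to exhibit a single stability condition $\sigma \in \Stab\D$ with $\gldim\sigma \le 1$, which immediately implies the infimum bound.

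First, I would define a central charge $Z\colon K(\D) \to \CC$ on the simple objects $S_1, \ldots, S_n$ of $\h$ by the uniform rule $Z(S_i) = -1$ for every $i$, so that $Z(M) = -\ell(M) \in \RR_{<0}$ for every non-zero $M \in \h$, where $\ell(M)$ denotes the Jordan-H\"older length. Because $\h$ is a length category with finitely many simples, Bridgeland's standard construction (cf.\ \cite[Prop.~5.3]{B1}) promotes the pair $(Z,\h)$ to a stability condition $\sigma = (Z, \hh{P})$ on $\D$: the Harder-Narasimhan filtrations exist trivially because every non-zero object of $\h$ has $Z$-phase equal to $1$, and the support property is automatic since $K(\D)$ is finitely generated and each simple has non-zero central charge.

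By construction, the slicing of this $\sigma$ is concentrated at integer phases:
\begin{equation*}
    \hh{P}(k) = \h[k] \quad (k \in \ZZ), \qquad \hh{P}(\phi) = 0 \text{ for } \phi \notin \ZZ.
\end{equation*}
Hence only integer phase differences can contribute to $\gldim\sigma$, and
\begin{equation*}
    \gldim \sigma = \sup\{ k_2 - k_1 \mid k_1, k_2 \in \ZZ,\; \Hom_\D(\h[k_1], \h[k_2]) \ne 0 \}.
\end{equation*}
A morphism $X'[k_1] \to Y'[k_2]$ with $X', Y' \in \h$ is an element of $\Hom^{k_2-k_1}_\D(X', Y')$, which vanishes for $k_2 - k_1 < 0$ (since $\h$ is the heart of a t-structure) and also for $k_2 - k_1 \ge 2$ by the hereditary hypothesis~\eqref{eq:here}. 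The supremum is therefore at most $1$, giving $\gldim\sigma \le 1$ and hence the desired bound on the infimum.

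There is no substantive obstacle: the only verification is Bridgeland's extension criterion for finite-length hearts with finitely many simples, after which the hereditary hypothesis kills every potentially contributing Ext group of degree $\ge 2$.
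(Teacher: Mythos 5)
Your proof is correct and follows essentially the same route as the paper: choose a central charge sending every simple of $\h$ to a negative real number so that $\h$ is concentrated in a single integer slice, and then use the hereditary vanishing $\Hom^{\ge2}(\h,\h)=0$ to bound $\gldim\sigma$ by $1$. Your write-up is in fact slightly more complete than the paper's, which leaves the final verification dangling.
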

\begin{proof}
Take a stability condition $\sigma=(Z,\mathcal{P})$ with heart $\h=\mathcal{P}(0,1]$,
such that the central charge $Z(S)$ is a negative real number for any simple $S\in\Sim\h$.
In other words, we have $\h=\mathcal{P}(1)$. Thus
\[
    \mathcal{P}(\phi)\ne\emptyset \Longleftrightarrow \phi\in\ZZ.
\]
Noticing \eqref{eq:here},
we have $\Hom(\mathcal{P}(i),\mathcal{P}(j))\neq0$ if and only if $i,j\in\ZZ$ and $i\le j\le 1$.
Thus $\gldim\sigma\leq1$ and the lemma follows.
\end{proof}
%=========================================================
\subsection{Acyclic quiver case}
%=========================================================
\begin{theorem}\label{thm:here}
Suppose that $Q$ is an acyclic quiver, which is not of Dynkin type.
Then the range of $\gldim$ on $\Aut\backslash\Stab\DQ/\CC$ is $[1, +\infty)$.
\end{theorem}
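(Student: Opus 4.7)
The plan is to establish three facts: first, $\gldim\sigma \geq 1$ for every $\sigma \in \Stab\DQ$; second, the value $1$ is attained; and third, $\gldim$ takes arbitrarily large values. Together with continuity of $\gldim$ (Section~\ref{sec:gdf}) and connectedness of $\Stab\DQ/\CC$, the intermediate value theorem then delivers the full interval $[1,+\infty)$ as the range.

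The key input for the first assertion is that a non-Dynkin acyclic $Q$ always admits an indecomposable $M \in \mod \k Q$ with $\Ext^1(M,M)\neq 0$ --- in the tame case any regular simple module in a homogeneous tube works, and the wild case offers such non-rigid indecomposables in abundance. Given $\sigma\in\Stab\DQ$, either $\gldim\sigma>1$ and we are finished, or $\gldim\sigma\leq 1$, in which case Proposition~\ref{pp:leq} forces $\sigma$ to be totally semistable. Then $M$ is semistable with a well-defined phase $\phi_\sigma(M)$, and the non-vanishing $\Hom_\DQ(M,M[1]) = \Ext^1(M,M)\neq 0$ yields
\[
    \gldim\sigma \geq \phi_\sigma(M[1]) - \phi_\sigma(M) = 1.
\]

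For the second assertion I would apply Lemma~\ref{lem:le1} to the hereditary heart $\h=\mod \k Q$: the stability condition $\sigma$ with $\hh{P}(1)=\h$ obtained by placing the central charges of the simples on the negative real axis satisfies $\gldim\sigma = \gldim(\mod \k Q) = 1$, since $Q$ has at least one arrow so some $\Ext^1$ between simples is nonzero. Combined with the first assertion this pins $\gldim\sigma$ to exactly $1$. For the third, I would copy the iterated simple tilting argument from the proof of Theorem~\ref{thm:Dynkin}: by \cite[Thm.~5.9]{KQ} one can iteratively forward tilt a simple in any heart of $\DQ$, producing hearts whose Ext-quiver carries arrows of arbitrarily high degree, and these give stability conditions with arbitrarily large $\gldim$.

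The main subtle point is the lower bound: \emph{a priori} a non-rigid module $M$ need not contribute to $\gldim\sigma$ unless it is semistable under $\sigma$, and Proposition~\ref{pp:leq} is exactly what rescues us in the borderline regime $\gldim\sigma\leq 1$. This is also where the non-Dynkin hypothesis enters in an essential way, preventing the Dynkin phenomenon of $\gldim$ descending to $1-2/h$ from recurring.
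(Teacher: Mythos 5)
Your three steps (lower bound via Proposition~\ref{pp:leq} and a non-rigid indecomposable, attainment of $1$ via the hereditary heart, unboundedness via iterated tilting, glued by continuity and connectedness) reproduce the paper's argument almost verbatim, and each of them is correct as you state it. In particular your handling of the borderline regime $\gldim\sigma\le 1$ --- invoking total semistability to guarantee that the non-rigid module $M$ is actually semistable before using $\Ext^1(M,M)\neq 0$ --- is exactly the paper's mechanism.

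There is, however, one step the paper proves that you omit, and it is needed for the statement as written: you must show that $\gldim\sigma<+\infty$ for \emph{every} $\sigma\in\Stab\DQ$. Your argument shows the range contains $[1,+\infty)$ and is contained in $[1,+\infty]$; since $\gldim$ is allowed to take the value $+\infty$ by definition, excluding $+\infty$ is a genuine part of the claim. In the Dynkin case this is free because $\Ind\DQ/[1]$ is finite, but for a non-Dynkin acyclic quiver it is not automatic. The paper's fix: for $\sigma=(Z,\hh{P})$ with heart $\h=\hh{P}(0,1]$, the finitely many simples of $\h_Q=\mod\k Q$ lie in $\hh{P}(-L,L)$ for some integer $L\gg1$, hence so does $\h_Q$; since $\Ind\DQ=\bigcup_{m\in\ZZ}\Ind\h_Q[m]$, one gets $\h\subset\langle\,\h_Q[m]\mid |m|<L\,\rangle$, and heredity $\Hom^{\ge2}(\h_Q,\h_Q)=0$ then yields $\Hom^{>2L}(\h,\h)=0$, so $\gldim\sigma\le 2L$. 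You should add this paragraph; everything else in your proposal stands.
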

\begin{proof}
$\DQ$ admits a hereditary heart $\h_Q\colon=\mod \k Q$, which is finite/algebraic.
By Lemma~\ref{lem:le1}, we see that $\min\gldim\le1$.
Now suppose that $\gldim\sigma\le1$ for some $\sigma\in\Stab\DQ$.
By Proposition~\ref{pp:leq}, every indecomposable is semistable.
However, when $Q$ is not of Dynkin type, there are indecomposable objects
with non-trivial self-extension, which implies $\gldim\ge1$.
Thus, $\gldim\sigma=1$.
Any value in $(1,+\infty)$ is reachable is similar to the proof in Theorem~\ref{thm:Dynkin}.
What is left to show is that $\gldim\sigma<+\infty$ for any $\sigma$.

Let $\sigma=(Z,\hua{P})\in\Stab\DQ$ with heart $\h=\mathcal{P}(0,1]$.
There exists an integer $L>>1$ such that any simple of $\h_Q$ and hence $\h_Q$ is in $\hua{P}(-L,L)$.
Then we deduce that $\h\cap\h_Q[L']=\emptyset$ for integer $|L'|\ge L$.
As $$\Ind\DQ=\bigcup_{m\in\ZZ}\Ind\h_Q[m],$$
we deduce that
\[
    \h\subset \< \h_Q[m]\mid |m|<L \>.
\]
Thus \eqref{eq:here} imlies
\[
    \Hom^{>2L}(\h,\h)=0
\]
and hence $\gldim\sigma\le2L$.
\end{proof}

%=========================================================
\subsection{Example: Kronecker quiver}\label{sec:P1}
%=========================================================
Notice that we have a derived equivalence
\begin{gather}\label{eq:KQ}
    \D^b(\coh\mathbb{P}^1) \cong \D_\infty(K_2),
\end{gather}
where $K_2$ is the Kronecker quiver (two vertices with double arrows).
Similar to $A_2$ case (cf. Figure~\ref{fig:A2}), we have the following
fundamental domain $\hua{K}_0$ in Figure~\ref{fig:K2}
for $\Aut\backslash\Stab\D^b(\coh \mathbb{P}^1)/\CC$
(see \cite{Ok} and \cite[\S~7.5.2]{Qiu}).
Note that the ratio of the central charges of two of the semistable objects
equals $e^{\pi\mathbf{i}z}$ for $z=x+y\mathbf{i}$ and
\begin{equation}
\begin{cases}
    k_1=\{z=x+y\mathbf{i} \mid  x\in(\frac{1}{2},1),
        y\pi=\ln(-\cos x\pi), \\
    k_0=\{z=x+y\mathbf{i} \mid  x\in(\frac{1}{2},1),
        y\pi=-\ln(-\cos x\pi),
\end{cases}
\end{equation}
are the boundaries lines of $\hh{K}_0$ in the figure.
One can calculate the following.
\begin{figure}\centering
\begin{tikzpicture}[scale=1]
\path (0,0) coordinate (O);
\path (4,0) coordinate (A);
\path (3+0.2,5) coordinate (m3);
\path (3+0.2,-5) coordinate (m4);
\draw[fill=gray!14,dotted]   (m3)
    .. controls +(-90:4) and +(180:3) .. (6,0)
    .. controls +(180:3) and +(90:4) .. (m4)
    to [out=180,in=0] (-3,-5)
    to [out=90,in=-90] (-3,5);
\draw[white, thick]   (m4) to (-3,-5) to (-3,5);
\path (-2,3) node {$\hua{K}_0$};
\draw[dotted,->,>=latex] (6,0) -- (8,0) node[right]{$x$};
\draw[blue,->,>=latex] (0,-5) -- (0,5) node[above]{$y$};
\draw (0,0) node[below left]{$0$};
\path[dotted] (-2,0) edge (4,0);
\path[dashed] (6,-5) edge (6,5);
\path (3,-5)[dotted] edge (3,5);
\draw[NavyBlue,thick] (m3) .. controls +(-90:4) and +(180:3) .. (6,0);
\draw[NavyBlue,thick] (m4) .. controls +(90:4) and +(180:3) .. (6,0);
%\path (2.6,2) node {$k_2$};
\path (3.6,4) node {$k_1$};\path (3.6,-4) node {$k_0$};
\draw[NavyBlue,dashed,thick] (6,0) .. controls +(180:2.8) and +(180:2) .. (6,3);
\draw[NavyBlue,dashed,thick] (6,0) .. controls +(180:2.8) and +(180:2) .. (6,-3);
\draw[NavyBlue,dashed,thick] (6,0) .. controls +(180:1.8) and +(180:2) .. (6,2);
\draw[NavyBlue,dashed,thick] (6,0) .. controls +(180:1.8) and +(180:2) .. (6,-2);
\path (6,0) node[below right] {$1$};
%
%\path (.5,-5.2) node[gray!50]
%{$\underbrace{\qquad\qquad\qquad\qquad\qquad\qquad\qquad} $};
%\path (.5,-5.6) node {$K$};
%
\draw[fill=black] (0,0) circle (.05);\draw[fill=white] (6,0) circle (.1);
\draw[fill=white] (6,3) circle (.1);\draw[fill=white] (6,-3) circle (.1);
\draw[fill=white] (6,2) circle (.1);\draw[fill=white] (6,-2) circle (.1);
\path (6,5) node[above] {\tiny{$\infty$}};
\path (7,5) node {\small{$\frac{1}{\pi} \ln\frac{1}{0}$}};
\path (6,3) node[right] {\small{$\frac{1}{\pi} \ln\frac{2}{1}$}};
\path (6,2) node[right] {\small{$\frac{1}{\pi} \ln\frac{3}{2}$}};
\path (6,1.324) node[right] {{$\vdots$}};
\path (6,-5) node[below] {\tiny{$-\infty$}};
\path (7,-5) node {\small{$\frac{1}{\pi} \ln\frac{0}{1}$}};
\path (6,-3) node[right] {\small{$\frac{1}{\pi} \ln\frac{1}{2}$}};
\path (6,-2) node[right] {\small{$\frac{1}{\pi} \ln\frac{2}{3}$}};
\path (6,-1.324) node[right] {{$\vdots$}};
\path (6,0) node (a1) {\tiny{}};
\path (4.7,1.5) node (a2) {\tiny};
\path (4.2,1.5) node (a3) {\tiny};
\path (3.4,1) node (a4) {\tiny};
\path (4.7,-1.5) node (a7) {\tiny};
\path (4.2,-1.5) node (a6) {\tiny};
\path (3.4,-1) node (a5) {\tiny};
\path[->,orange, bend right, dashed] (a1) edge (a2);
\path[->,orange, thick] (a2) edge (a3);
\path[->,orange, bend right, thick] (a3) edge (a4);
\path[->,orange, bend right, thick] (a4) edge (a5);
\path[->,orange, bend right, thick] (a5) edge (a6);
\path[->,orange, thick] (a6) edge (a7);
\path[->,orange, bend right, dashed] (a7) edge (a1);
\end{tikzpicture}
\caption{$\Aut\backslash\Stab\D_\infty(K_2)/\CC = \big(\Stab\D_\infty(K_2)/\CC\big)/\ZZ$}\label{fig:K2}
\end{figure}
\begin{proposition}
$\gldim^{-1}(1)\subset \hua{K}_0$ is exactly the area in Figure~\ref{fig:K2}
bounded by $k_0$, $k_1$ and $y$-axis.
Outside this area, $x$-axis gives $1-\gldim$.
Thus we have
\[
    \gldim=\max\{1-x,1\}.
\]
\end{proposition}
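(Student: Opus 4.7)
The plan is to use the equivalence \eqref{eq:KQ} to split $\mathcal{K}_0$ into two regimes — geometric and non-geometric — and compute $\gldim$ directly in each. A point in the interior of the shaded region corresponds to a geometric stability condition on $\coh \mathbb{P}^1$: all torsion sheaves $\mathcal{O}_\lambda$ are $\sigma$-stable with a common phase $\psi$, and each line bundle $\mathcal{O}(n)$ is $\sigma$-stable with phase $\phi_n$ strictly between $\psi - 1$ and $\psi$, tending to $\psi - 1$ as $n \to -\infty$ and to $\psi$ as $n \to +\infty$. The indecomposables of $\D^b(\coh \mathbb{P}^1)$ admit nonzero $\Hom$s or $\Ext^1$s only among the four families $\Hom(\mathcal{O}(a), \mathcal{O}(b))$ for $b \ge a$, $\Ext^1(\mathcal{O}(a), \mathcal{O}(b))$ for $a \ge b + 2$, $\Hom(\mathcal{O}(a), \mathcal{O}_\lambda)$, and $\Ext^1(\mathcal{O}_\lambda, \mathcal{O}(a))$. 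Each corresponding phase gap lies in $(0, 1)$ and approaches $1$ as one of the indices tends to $\pm\infty$, so $\gldim \sigma = 1$ with the supremum unattained. This shows that the interior of the shaded region is contained in $\gldim^{-1}(1)$; the reverse inclusion follows from the next step.

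Outside the shaded region, wall-crossing across $k_0$ or $k_1$ destroys the semistability of the torsion sheaves, and only a pair of exceptional line bundles $E, F$ with $\Ext^1(E, F) \ne 0$ remains stable — namely the simples of a tilted heart whose other indecomposables are iterated $\Ext^1$-extensions of $E$ and $F$, analogous to a Kronecker-type module category. The coordinate $z = x + y\mathbf{i}$ of the fundamental domain is normalised so that the ratio of the central charges of these two stable simples is $e^{\pi \mathbf{i} z}$; in particular $x = \phi_\sigma(E) - \phi_\sigma(F)$. The nonzero $\Ext^1(E, F)$ then realises the maximal phase gap, equal to $\phi_\sigma(F) + 1 - \phi_\sigma(E) = 1 - x$, while every other pair of stable or semistable objects gives a strictly smaller gap by the exceptional-pair combinatorics, parallel to Proposition~\ref{pp:gd=}. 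This yields $\gldim \sigma = 1 - x$, and combining with the previous step gives $\gldim = \max\{1 - x, 1\}$.

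The main obstacle is the bookkeeping for the non-geometric regime: one must identify the two stable exceptional line bundles explicitly as a function of $z$, verify that the HN-factors of non-stable objects — in particular the torsion sheaves, which split into Kronecker-type pieces — produce no phase gap larger than $1 - x$, and match the phase difference $\phi_\sigma(E) - \phi_\sigma(F)$ with the fundamental-domain coordinate $x$. The shapes of the boundary curves $k_0, k_1$, defined by $y \pi = \pm \ln(-\cos x \pi)$, arise precisely where $|Z(E)| = |Z(F)|$; this is the wall-crossing locus separating the geometric regime (where $\gldim = 1$) from the non-geometric one (where $\gldim = 1 - x > 1$), so the formula $\gldim = \max\{1-x, 1\}$ holds uniformly on all of $\mathcal{K}_0$.
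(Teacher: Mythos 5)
Your overall strategy is the one the paper intends (its own proof is a one-line appeal to the description of $\Stab\D^b(\coh\mathbb{P}^1)$ in \cite{Ok,Qiu} followed by ``direct calculation''): split $\hua{K}_0$ into the geometric locus, where all skyscrapers and line bundles are stable and every phase gap is at most $1$, so $\gldim=1$ (note the value $1$ is in fact attained, e.g.\ by $\Ext^1(\hh{O}_x,\hh{O}_x)\neq0$, and your list of nonzero $\Hom$s omits the torsion--torsion ones, though neither point changes the conclusion), and the non-geometric locus, where only one exceptional pair survives and the unique nonzero $\Ext^1$ between the two simples of the tilted heart gives $\gldim=1-x$.

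However, your identification of these loci inside Figure~\ref{fig:K2} is wrong, and that identification is the actual content of the proposition. The wall separating the geometric region from the non-geometric one inside $\hua{K}_0$ is the $y$-axis $\{x=0\}$, i.e.\ the locus where the two exceptional simples $E,F$ (with $\Ext^1(E,F)\neq0$) have equal \emph{phases}, $\arg Z(E)=\arg Z(F)$; it is not the curves $k_0,k_1$, and it is not the locus $|Z(E)|=|Z(F)|$ (that locus is $\{y=0\}$, since $|Z(E)/Z(F)|=e^{-\pi y}$). The curves $k_0,k_1$ lie over $x\in(\tfrac12,1)$, well inside the geometric region; they are the boundary of the fundamental domain for the $\Aut$-action (tensoring by $\hh{O}(1)$), the analogue of $l_\pm$ in the $A_2$ picture, and crossing them does not destabilize the torsion sheaves but merely lands you in a translate of $\hua{K}_0$. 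Your own formula exposes the inconsistency: you claim $\gldim=1-x>1$ just beyond $k_0,k_1$, but there $x>\tfrac12$, so $1-x<\tfrac12$. The correct bookkeeping is: the region bounded by $k_0$, $k_1$ and the $y$-axis is $\{x\ge0\}\cap\hua{K}_0$, the (fundamental domain of the) geometric locus where $\gldim=1$; the complement $\{x<0\}$ in $\hua{K}_0$ is the non-geometric locus where $\gldim=1-x>1$. With that correction your two computations do assemble into $\gldim=\max\{1-x,1\}$ as claimed.
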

\begin{proof}
By the description of $\Aut\backslash\Stab\D^b(\coh \mathbb{P}^1)/\CC$ (see \cite{Ok,Qiu} for details),
this is a direct calculation.
\end{proof}

%=========================================================
\section{Further studies: Contractibility of spaces of stability conditions}\label{sec:contractible}
%=========================================================
In this section, we explain the potential application of global dimension function
to the contractibility conjecture of spaces of stability conditions.
%=========================================================
\subsection{Dynkin case}
%=========================================================
Let $Q$ be a Dynkin quiver.
\begin{conjecture}
Regard $\gldim$ as a function on $\Stab\DQ/\CC$.
Then the preimage $\gldim^{-1}[\cd,s)$ contracts to the Gepner point $\sigma_G$
for any real number $s>\cd$.
\end{conjecture}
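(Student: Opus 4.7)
The plan is to build an explicit strong deformation retraction from $\gldim^{-1}[\cd, s)$ onto $\{\sigma_G\}$, exploiting the uniqueness part of Theorem~\ref{thm:last}. Given $\sigma=(Z,\hh{P})$, define the \emph{Auslander--Reiten averaged} central charge
\begin{equation*}
    Z^{\star}(E) := \frac{1}{h} \sum_{k=0}^{h-1} e^{2\pi \mathbf{i} k/h}\, Z(\AR^k E),
\end{equation*}
which is well-defined since $\AR^{h}=[-2]$ acts trivially on $Z$. A direct reindexing shows $Z^{\star}(\AR E) = e^{-2\pi \mathbf{i}/h} Z^{\star}(E)$, so $Z^{\star}$ obeys the central-charge part of the Gepner equation \eqref{eq:GP}. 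For each $t\in[0,1]$ set $Z^{(t)} := (1-t)Z + t Z^{\star}$, interpolating between $Z^{(0)}=Z$ and $Z^{(1)}=Z^{\star}$; note that $Z_G^{\star}=Z_G$, so this flow fixes $\sigma_G$.

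The first step is to promote $\{Z^{(t)}\}$ to a continuous family $\{\sigma^{(t)}\}\subset\Stab\DQ$ with $\sigma^{(0)}=\sigma$, by concatenating Bridgeland's local deformation theorem \cite{B1} across the finitely many walls that the path crosses. Since $Z^{(1)}=Z^{\star}$ satisfies the Gepner identity for central charges, the uniqueness part of Theorem~\ref{thm:last} forces $\sigma^{(1)} = c\cdot\sigma_G$ for some $c\in\CC$; passing to $\Stab\DQ/\CC$ produces a path terminating at $\sigma_G$. Continuity in $\sigma$, together with the fact that $\sigma^{(t)}\equiv\sigma_G$ on the $\sigma_G$-orbit, yield the strong deformation retraction.

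The main obstacle is to establish that $\gldim\sigma^{(t)}<s$ for all $t\in[0,1]$, so that the flow remains inside $\gldim^{-1}[\cd,s)$. The ideal would be the convex bound
\begin{equation*}
    \gldim \sigma^{(t)} \le (1-t)\gldim\sigma + t\cdot\cd,
\end{equation*}
or at least monotonicity in $t$. Heuristically, each step of averaging aligns $\phi(\kong{S})$ closer to the constant $\cd$, which by Proposition~\ref{pp:CY case} drives $\gldim$ down; the $C_h$-symmetry of the Auslander--Reiten quiver that underlies Proposition~\ref{pp:A} and the leaf-propagation argument in Theorem~\ref{thm:last} further supports this picture. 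Turning this into a rigorous bound is delicate because the set of semistable objects may change with $t$, so a pair $(E,F)$ realizing $\gldim\sigma^{(t)}$ need not stay semistable throughout; a compactness/limit argument, or a piecewise analysis over a wall-and-chamber decomposition, is likely needed.

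As a first milestone I would handle the case of $s$ close to $\cd$, where $\gldim^{-1}[\cd,s)$ lies inside a single Bridgeland chart around $\sigma_G$ and the lift of the straight-line homotopy is automatic. The general case should then proceed by induction on the number of walls crossed by the averaging flow, using simple tilting from \cite{KQ} to bridge between chambers. If the AR-averaging flow fails to be $\gldim$-monotone on some chamber, a replacement via a discretized gradient-like descent against $\gldim$ (well-defined as a continuous Lyapunov function on $\Stab\DQ/\CC$ with unique minimum $\sigma_G$) should still produce a retraction, at the cost of a combinatorially heavier verification chamber by chamber.
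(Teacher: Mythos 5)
This statement is stated as a conjecture in the paper and is not proved there, so there is no proof of record to compare against; the question is only whether your proposal actually closes it, and it does not. The most serious gap is the step where you conclude $\sigma^{(1)}=c\cdot\sigma_G$ from the uniqueness in Theorem~\ref{thm:last}. That theorem's uniqueness applies to stability conditions satisfying the full Gepner equation \eqref{eq:GP}, i.e.\ an identity of pairs $(Z,\hh{P})$ involving the slicing, not merely to stability conditions whose central charge happens to satisfy $Z\circ\AR=e^{-2\pi\mathbf{i}/h}Z$. A stability condition with an $\AR$-equivariant central charge need not have an $\AR$-equivariant slicing, so even if you could lift the path $Z^{(t)}$ all the way to $t=1$, nothing forces the endpoint to be $\sigma_G$. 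Relatedly, the averaged charge $Z^{\star}$ may be degenerate: the average $\frac{1}{h}\sum_k e^{2\pi\mathbf{i}k/h}Z(\AR^kE)$ can vanish on classes of semistable objects, in which case $Z^{\star}$ is not the central charge of any stability condition and the path exits $\Stab\DQ$ before reaching $t=1$.

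The second genuine gap is the lifting itself. Bridgeland's deformation theorem makes the projection to central charges a local homeomorphism, not a covering map, so a path of central charges does not automatically lift; your proposed "concatenation across finitely many walls" presupposes that the lift exists at each stage and that the support constant does not degenerate along the way, which is exactly what needs proof. Third, you correctly identify that the bound $\gldim\sigma^{(t)}<s$ is the crux, but you offer only a heuristic (alignment of $\phi_\sigma(\kong{S})$ toward $\cd$); Proposition~\ref{pp:CY case} requires $\phi_\sigma(\kong{S})$ to \emph{exist}, i.e.\ $\kong{S}$ to be $\sigma^{(t)}$-semistable with constant phase shift, which will generally fail off the Gepner orbit, so that proposition cannot be invoked to control $\gldim$ along the flow. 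Finally, a deformation retraction requires joint continuity in $(\sigma,t)$, which a chamber-by-chamber construction does not supply for free. The first milestone you propose (small $s$, a single Bridgeland chart) is plausible and would be a worthwhile partial result, but as written the proposal does not prove the conjecture.
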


A much wild guess is the following.
\begin{question}
The function $\gldim$ on $\Stab\DQ/\CC$ is piecewise differentiable
and behaves like a Morse function with a unique critical point $\sigma_G$.
\end{question}

It is also interesting to consider the stability conditions such that every indecomposable is stable.
Such a subspace of $\Stab\DQ$ is closely related to the fundamental domain
of $\Stab\D_{fd}(\qq{2})$ for the Calabi-Yau-2 case (or Kleinian singularities), cf. \cite{B2}.
\begin{question}
It is also interesting to study the space of total stability conditions, or $\operatorname{ToSt}(Q)$
for type $D$ and $E$.
\end{question}
%Recall that we have the description of $\gldim^{-1}[\cd,1)$
%in $\Stab\DQ$ for $A_n$ quivers in Proposition~\ref{pp:A}.
%However, even in that case, it is not easy to see why and how $\gldim$ implies the contractibility.
In fact, this question has been just solved in a follow-up work \cite{QZ}.
%=========================================================
\subsection{Coherent sheaves on $\bP^n$}
%=========================================================
Let $\D^b(\coh \bP^n)$ be the derived category of coherent sheaves on $\bP^n$
for some $n\ge1$.
Recall that for $n=1$, we have \eqref{eq:KQ}.

We have the following conjecture.
\begin{conjecture}
The minimal value of $\gldim$ on $\Stab\D^b(\coh \bP^n)$ is $n$.
\end{conjecture}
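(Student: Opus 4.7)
The plan is to prove the conjecture by establishing the upper bound (attainability of $n$) and the lower bound ($\gldim\sigma\ge n$ for every $\sigma$) separately.

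For the upper bound, I would use Beilinson's derived equivalence $\D^b(\coh\bP^n)\cong\D^b(B_n)$, where $B_n=\End_{\bP^n}\!\bigl(\bigoplus_{i=0}^n\mathcal{O}(i)\bigr)$ is the Beilinson algebra, which is well-known to have $\gldim B_n=n$. Pick a central charge $Z$ on $K(\D^b(B_n))$ that sends every simple $B_n$-module to the negative real axis. Then the canonical heart $\mod B_n$ is concentrated in a single slice $\hh{P}(1)$, so by the principle recorded right after Definition~3.1 (stating $\gldim\sigma=\gldim A$ when the heart lies in one slice), we obtain a stability condition $\sigma_0$ with $\gldim\sigma_0=\gldim B_n=n$. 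This already gives $\inf\gldim\le n$.

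For the lower bound I would argue in two stages using the Serre functor $\kong{S}=[n]\otimes\omega_{\bP^n}$. First, observe that all skyscraper sheaves $\mathcal{O}_x$ ($x\in\bP^n$) share a single class $[\mathcal{O}_x]$ in $K(\bP^n)$, hence a common central charge, and $\Ext^n(\mathcal{O}_x,\mathcal{O}_x)\cong\det T_x\bP^n\ne 0$. If some $\mathcal{O}_x$ is $\sigma$-semistable then $\mathcal{O}_x[n]$ is also semistable with phase exactly $\phi_\sigma(\mathcal{O}_x)+n$, and the non-vanishing Hom forces $\gldim\sigma\ge n$. When no $\mathcal{O}_x$ is $\sigma$-semistable, I would test $\sigma$ on the line bundles $\mathcal{O}(k)$: we have non-zero Homs from the two extremes of the exceptional collection, namely $\Hom(\mathcal{O}(k),\mathcal{O}(k+1))=H^0(\mathcal{O}(1))$ and $\Hom(\mathcal{O}(k+n+1),\mathcal{O}(k)[n])=H^n(\mathcal{O}(-n-1))^*$, which give inequalities between the phases of the top/bottom HN factors of consecutive $\mathcal{O}(k)$ that, after telescoping, predict a shift of $n$ across $n+1$ steps. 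Combined with the asymptotic $[\mathcal{O}(k)]/k^n\to [\mathcal{O}_x]/n!$ in $K(\bP^n)\otimes\QQ$, the support property forces $Z(\mathcal{O}(k))/k^n$ to converge, so the HN phases of $\mathcal{O}(k)$ concentrate around $\phi_\sigma(Z(\mathcal{O}_x))$ as $k\to\infty$. Passing to the limit in the Hom estimates then recovers $\gldim\sigma\ge n$.

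The main obstacle is precisely Stage 2 of the lower bound: controlling, uniformly in $k$, the drift of HN factors of $\mathcal{O}(k)$ when $\mathcal{O}(k)$ is not itself $\sigma$-semistable. The technical challenge is ensuring that both the mass distribution and the phase spread of the HN polytope of $\mathcal{O}(k)$ tend to the single class ray of $[\mathcal{O}_x]$ — this should follow from the support property combined with the polynomial (in $k$) nature of the Hilbert-polynomial decomposition $[\mathcal{O}(k)]=\sum_{j=0}^n\binom{k+j}{j}[\Omega^{\,n-j}(n-j)]$, but making the convergence rigorous is delicate. An alternative cleaner route worth pursuing is purely categorical: first show that no bounded heart in $\D^b(\coh\bP^n)$ has global dimension strictly less than $n$ (reducing to a statement about algebraic hearts, analogous to the hereditary lemma used in Theorem~\ref{thm:here}), then adapt the tilting/approximation arguments of Theorem~\ref{thm:here} to pass from algebraic hearts to arbitrary slicings.
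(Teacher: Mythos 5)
The paper gives no proof of this statement: it is posed as a conjecture, verified only for $n=1$ via the explicit description of $\Stab\D^b(\coh\bP^1)\cong\Stab\D_\infty(K_2)$ in Section~\ref{sec:P1}, with the case $n=2$ deferred to the forthcoming work [FLLQ]. So you are attacking an open problem and your argument must stand on its own. Its two solid parts are the upper bound and Stage~1 of the lower bound. The upper bound is fine: realizing $\D^b(\coh\bP^n)$ as $\D^b(B_n)$ for the Beilinson algebra $B_n$ (whose global dimension is $n$, by the Koszul resolutions of the simples) and concentrating the heart $\mod B_n$ in a single slice produces, by the remark following the definition of $\gldim$, a stability condition with $\gldim=n$, so the infimum is at most $n$ and is attained if the lower bound holds. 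Stage~1 is also correct: if some $\mathcal{O}_x$ is $\sigma$-semistable of phase $\phi$, then $\Ext^n(\mathcal{O}_x,\mathcal{O}_x)\neq0$ together with $\mathcal{O}_x[n]\in\hh{P}(\phi+n)$ forces $\gldim\sigma\ge n$; this is the skyscraper instance of Proposition~\ref{pp:CY case}, since $\kong{S}(\mathcal{O}_x)=\mathcal{O}_x[n]$.

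The genuine gap is Stage~2, and it is not merely a matter of ``making the convergence rigorous.'' The inequality extracted from $\Hom(\mathcal{O}(k),\mathcal{O}(k+1))\neq0$ only says $\phi^-_\sigma(\mathcal{O}(k))\le\phi^+_\sigma(\mathcal{O}(k+1))$, i.e.\ it relates the \emph{bottom} HN phase of one object to the \emph{top} HN phase of the next; telescoping such inequalities yields nothing unless you can bound the internal spread $\phi^+_\sigma(\mathcal{O}(k))-\phi^-_\sigma(\mathcal{O}(k))$ uniformly in $k$. Neither the support property nor the polynomial behaviour of $[\mathcal{O}(k)]$ in $K(\bP^n)$ supplies such a bound: the support property controls $|Z(\alpha)|$ from below on semistable classes, not the set of phases occurring in the HN filtration of a fixed object, and the classes of the individual HN factors of $\mathcal{O}(k)$ are not determined by $[\mathcal{O}(k)]$ alone. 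The assertion that the HN phases of $\mathcal{O}(k)$ concentrate on the ray of $Z([\mathcal{O}_x])$ is in effect a large-volume statement, known only once one already has structural control of $\sigma$ (geometric or algebraic); for arbitrary $\sigma\in\Stab\D^b(\coh\bP^n)$ with $n\ge2$ no such classification exists --- even connectedness of the stability space is open --- which is precisely why the paper leaves this as a conjecture and why [FLLQ] restricts to $\bP^2$, where the global structure of the stability manifold is known. Your proposed ``cleaner route'' inherits the same difficulty: both the claim that no bounded heart in $\D^b(\coh\bP^n)$ has global dimension less than $n$, and the passage from hearts to arbitrary slicings, are themselves open in this generality.
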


This conjecture has been confirmed by Kikuta-Ougchi-Takahashi,
see \cite[Thm.~4.2 and Ex.~2.5]{KOT},
where they show that $\gldim\D^b(\coh \bP^n)\ge n$.
The equality follows immediately as there is a stability condition $\sigma$ with $\gldim=n$,
where the heart of $\sigma$ is the Beilinson heart $\h$ with simples
$$\Sim\h=\hua{O}[n],\hua{O}(1)[n-1]\ldots, \hua{O}(n)$$
and central charges $Z(\hua{O}(i)[n-i])=1$.

Moreover, recall the following definition of geometric stability conditions.

\begin{definition}
A stability condition $\sigma$ on $\D^b(\coh \bP^n)$ is \emph{geometric} if
all skyscraper sheaves $\hh{O}_x$ are $\sigma$-stable with the same phase.
%We denote the subspace consisting of all geometric stability condition by %$\Stab^{\operatorname{Geo}}(\bP^n)\subset\D^b(\coh \bP^n)$.
\end{definition}

Note that the conjecture above holds for $n=1$ by the calculation in Section~\ref{sec:P1}.
In the follow-up work \cite{FLLQ}, we prove the following conjecture.
One expects similar result should also hold for $n>2$.

\begin{conjecture}
Regard $\gldim$ as a function on $\Stab\D^b(\coh \bP^2)$.
Then $\gldim^{-1}(2)$ consists of almost all geometric stability conditions and
$\gldim^{-1}[2,s)$ contracts $\gldim^{-1}(2)$, for any real number $s>2$.
\end{conjecture}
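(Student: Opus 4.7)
The conjecture splits into two essentially independent assertions: a characterization of $\gldim^{-1}(2)$ in terms of geometric stability conditions, and a contractibility statement for the superlevel set $\gldim^{-1}[2,s)$. My plan is to tackle them in sequence.

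For the forward direction (geometric $\Longrightarrow \gldim\sigma=2$), recall that the Serre functor on $\D^b(\coh\bP^2)$ is $\kong{S}=(-\otimes\hh{O}(-3))[2]$, so that for every skyscraper $\kong{S}(\hh{O}_x)=\hh{O}_x[2]$. If $\sigma$ is geometric, all $\hh{O}_x$ are stable with a common phase $\phi_0$, hence $\phi_\sigma(\kong{S}(\hh{O}_x))-\phi_\sigma(\hh{O}_x)=2$ along this $\kong{S}$-orbit. I would then propagate the uniform shift to every semistable object, using the line bundles $\hh{O}(k)$ and the Beilinson generation of $\D^b(\coh\bP^2)$ by $\{\hh{O},\hh{O}(1),\hh{O}(2)\}$ to show $\kong{S}$ is $\sigma$-semistable globally with shift $2$. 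Proposition~\ref{pp:CY case} then yields $\gldim\sigma=\phi_\sigma(\kong{S})=2$.

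For the reverse direction, suppose $\gldim\sigma=2$. Since the class $[\hh{O}_x]$ in $K(\D^b(\coh\bP^2))$ is independent of $x$, the central charge $Z(\hh{O}_x)$ is constant on the family. Using $\Hom(E,E)\ne 0$ combined with Serre duality $\Hom(E,\kong{S}(E))^*\cong\Hom(E,E)$, the saturation $\gldim\sigma=2$ forces $\phi_\sigma(\kong{S}(E))=\phi_\sigma(E)+2$ whenever both $E$ and $\kong{S}(E)$ are semistable. Applied to $E=\hh{O}_x$ and combined with a deformation/continuity argument along the connected family $\{\hh{O}_x\}_{x\in\bP^2}$, this yields a common phase for all semistable skyscrapers. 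The ``almost all'' exception is precisely the thin locus where some $\hh{O}_x$ becomes strictly semistable (i.e.\ $\sigma$ sits on a wall of the geometric chamber), which one must check is a negligible subset.

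For the contractibility, I would construct an explicit continuous deformation: given $\sigma\in\gldim^{-1}[2,s)$, interpolate its central charge to a reference geometric one in Bridgeland's $(\alpha,\beta)$-slice, HRS-tilting the heart whenever a wall is crossed. The key verifications are that $\gldim(\sigma_t)<s$ throughout the path (using continuity of $\gldim$, convexity of the geometric chamber in $\Stab\D^b(\coh\bP^2)/\CC$, and local estimates near walls), and that the construction is jointly continuous in $\sigma$ so as to yield a strong deformation retract onto $\gldim^{-1}(2)$. The main obstacle will be the intricate wall structure of $\Stab\D^b(\coh\bP^2)$, controlled by the Le Potier curve and an infinite family of totally semistable walls: one must classify precisely which geometric $\sigma$ fail $\gldim\sigma=2$ (pinning down the ``almost all''), and verify that the retract respects each wall while keeping $\gldim(\sigma_t)$ monotone and bounded above by $s$ throughout.
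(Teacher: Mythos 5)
First, a point of order: the paper gives no proof of this statement. It is stated as a conjecture, with the proof explicitly deferred to the forthcoming work \cite{FLLQ}, so your proposal can only be measured against the general machinery of Section~\ref{sec:gldim}, and against whether the argument itself is sound.

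It is not, and the gap occurs at the very first step. You invoke Proposition~\ref{pp:CY case}, which requires that $\phi_\sigma(\kong{S})$ \emph{exists}, i.e.\ that $\kong{S}$ preserves the class of $\sigma$-semistable objects and shifts \emph{every} phase by one and the same constant. For $\D^b(\coh \bP^2)$ the Serre functor is $\kong{S}=(-\otimes\hh{O}(-3))[2]$; while indeed $\kong{S}(\hh{O}_x)=\hh{O}_x[2]$, already for line bundles one has $\kong{S}(\hh{O})=\hh{O}(-3)[2]$, and $\phi_\sigma(\hh{O}(-3)[2])-\phi_\sigma(\hh{O})=2+\phi_\sigma(\hh{O}(-3))-\phi_\sigma(\hh{O})$, which differs from $2$ for a generic geometric stability condition because $[\hh{O}(-3)]$ and $[\hh{O}]$ are not proportional in $K(\D^b(\coh\bP^2))\otimes\RR$ and so their central charges cannot have equal argument throughout the geometric chamber. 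Hence $\phi_\sigma(\kong{S})$ does not exist, no ``propagation'' through the Beilinson collection can create it, and Proposition~\ref{pp:CY case} is inapplicable. (Were it applicable, every geometric stability condition would satisfy a Gepner-type symmetry for $\kong{S}$, which is absurd: Gepner points are special orbifold points.) Proving $\gldim\sigma=2$ on the geometric chamber genuinely requires bounding $\phi_\sigma(F)-\phi_\sigma(E)$ for \emph{arbitrary} semistable $E,F$ with $\Hom(E,F)\neq0$ via the numerical constraints on semistable classes (Bogomolov-type inequalities, the Le Potier curve); none of that is in your sketch.

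The remaining parts are programmatic rather than proofs. From $\gldim\sigma=2$ and $\Hom(E,\kong{S}(E))\neq0$ you only obtain the upper bound $\phi_\sigma(\kong{S}(E))-\phi_\sigma(E)\le 2$, not the equality you assert, and for $E=\hh{O}_x$ the equality $\phi_\sigma(\hh{O}_x[2])=\phi_\sigma(\hh{O}_x)+2$ is a tautology that carries no information; the actual difficulty --- showing that $\gldim\sigma=2$ forces each $\hh{O}_x$ to be semistable, which is the substance of identifying $\gldim^{-1}(2)$ with almost all geometric stability conditions --- is nowhere addressed (note that once all $\hh{O}_x$ are semistable, equality of phases is automatic since $Z(\hh{O}_x)$ is independent of $x$, so the ``deformation/continuity along the family'' adds nothing). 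Finally, the retraction of $\gldim^{-1}[2,s)$ is described only as an intention, with the essential verifications (behaviour at walls, monotone control of $\gldim$ along the flow, joint continuity) explicitly left open. As it stands, the proposal does not establish the conjecture.
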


\appendix
%=========================================================
\section{$q$-stability conditions on Calabi-Yau-$\XX$ categories}\label{sec:q}
%=========================================================
Let $\D_{\XX}$ be a triangulated category with a distinguish auto-equivalence
$$\XX \colon \D_{\XX} \to \D_{\XX}.$$
We will write $E[l \XX]$ instead of $\XX^l(E)$ for
$l \in \ZZ $ and $E \in \D_{\XX}$.
Set $$R=\ZZ[q^{\pm 1}]$$ and define the $R$-action on $K(\D_{\XX})$ by
\[
    q^n \cdot [E] := [E[n \XX]].
\]
Then $K(\D_{\XX})$ has an $R$-module structure.
Let $\Aut\D_\XX$ be the group of auto-equivalences of $\D_\XX$
that commute with $\XX$ and
$\Hom^{\ZZ^2}(M,N)\colon=\bigoplus_{k,l\in\ZZ}\Hom(M,N[k+l\ZZ])$.

%=========================================================
\subsection{Calabi-Yau-$\XX$ categories from quivers}
%=========================================================
For an acyclic quiver $Q$, denote by $\qq{\XX}$
the Calabi-Yau-$\XX$ Ginzburg differential $\ZZ\oplus\ZZ[\XX]$ graded algebra of $Q$,
that is constructed as follows (cf. \cite{IQ1} and \cite[Sec.~7.2]{K}):
\begin{itemize}
\item   Let $\overline{Q}$ be the graded quiver whose vertex set is $Q_0$
and whose arrows are: the arrows in $Q$ with degree $0$;
an arrow $a^*:j\to i$ with degree $2-\XX$ for each arrow $a:i\to j$ in $Q$;
a loop $e^*:i\to i$ with degree $1-\XX$ for each vertex $e$ in $Q$.
\item   The underlying graded algebra of $\qq{\XX}$ is the completion of
the graded path algebra $\k \overline{Q}$ in the category of graded vector spaces
with respect to the ideal generated by the arrows of $\overline{Q}$.
\item   The differential of $\qq{\XX}$ is the unique continuous linear endomorphism homogeneous
of degree $1$ which satisfies the Leibniz rule and
takes the following (non-zero) values on the arrows of $\overline{Q}$
\[
    \diff \sum_{e\in Q_0} e^*=\sum_{a\in Q_1} \, [a,a^*] .
\]
\end{itemize}
Write $\DXQ$ for $\D_{fd}(\mod  \qq{\XX})$,
the \emph{finite dimensional derived category} of $\qq{\XX}$,
which admits the Serre functor $\XX$ that corresponds to grading shift of $(0,1)$.

Note that there is an embedding
\begin{gather}\label{eq:Lag}
    \Lag\colon\DQ\to\DXQ,
\end{gather}
induced from the projection $\qq{\XX}\to \k Q$, which is a Lagrangian immersion,
in the sense that
\begin{gather}\label{eq:HOM}
    \RHom_{\DXQ}(\hh{L}(L),\hh{L}(M))=\RHom_{\DQ}(L,M)
        \oplus D\RHom_{\DQ}(M,L)[-\XX],
\end{gather}
Moreover, $\Lag$ also induces an $R$-isomorphism
\begin{gather}\label{eq:KK}
    \Lag_*\colon K(\DXQ)\cong_R  K(\DQ) \otimes R=R^n,
\end{gather}
where
the simple $\k Q$-modules provide a $\ZZ$-basis for $K(\DQ)\cong\ZZ^n$ and
the simple $\qq{\XX}$-modules provide an $R$-basis for $K(\DXQ)$.

By abuse of notation, we will not distinguish objects in $\DQ$
and their images in $\DXQ$ (under the fixed canonical Lagrangian immersion $\Lag$).
%=========================================================
\subsection{$q$-Stability conditions}
%=========================================================
We recall $q$-stability conditions from \cite{IQ1}.
\begin{definition}\cite[Def.~3.4]{IQ1}
Suppose that $\D$ is a triangulated category with Grothendieck group $K(\D_\XX)\cong_R R^n$.
An $q$-stability condition
consists of a (Bridgeland) stability condition $\sigma=(Z,\hh{P})$ on $\D_\XX$ and
a complex number $s\in\CC$, satisfying
\begin{gather}\label{eq:X=s}
    \XX(\sigma)=s \cdot \sigma.
\end{gather}
We may write $\sigma[\XX]$ for $\XX(\sigma)$.
Denote by $\XStab_s\D_{\XX}$,
the space of $q$-stability conditions
consisting of $(\sigma, s)$ with $q$-support property.
\end{definition}

We have the following results.% as the generalization of Bridgeland's original one \cite[Thm.~1.2]{B1}.

\begin{proposition}\cite[Thm.~3.10]{IQ1}
\label{pp:IQ1}
The projection map defined by taking central charges
\begin{equation}\label{eq:forgetful}
\pi_Z \colon \XStab_s\D_\XX \longrightarrow \Hom_{R}(K(\D_{\XX}),\CC_s),
\quad (Z,\sli) \mapsto Z
\end{equation}
is a local isomorphism of topological spaces.
In particular, $\pi$ induces a complex structure on $\XStab_s\D_{\XX}$.
\end{proposition}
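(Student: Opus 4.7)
The plan is to reduce this statement to Bridgeland's classical local homeomorphism theorem
$\Stab\D_\XX\to\Hom_\ZZ(K(\D_\XX),\CC)$, and then to exploit uniqueness in Bridgeland's deformation theorem to show that the $\XX$-equivariance constraint is preserved under deformations. The guiding observation is that the Gepner-type condition $\XX(\sigma)=s\cdot\sigma$ on the stability-condition side corresponds, after taking central charges, to $R$-linearity of $Z$ with $q$ acting on $\CC_s$ by $e^{-\mathbf{i}\pi s}$. Thus $R$-linear perturbations of $Z$ should produce $\XX$-equivariant perturbations of $\sigma$, and conversely.

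First I would record the elementary fact that if $\sigma=(Z,\sli)$ satisfies \eqref{eq:X=s}, then for every $E\in\D_\XX$
\[
    Z(\XX(E)) = Z(E)\cdot e^{-\mathbf{i}\pi s},
\]
so that $Z$ lies in $\Hom_R(K(\D_\XX),\CC_s)$. Conversely, if $\sigma'=(Z',\sli')$ is an ordinary stability condition with $Z'\in\Hom_R$, I want to show $\XX(\sigma')=s\cdot\sigma'$. The key is that both $\XX(\sigma')$ and $s\cdot\sigma'$ are honest Bridgeland stability conditions on $\D_\XX$ with the same central charge $Z'\cdot e^{-\mathbf{i}\pi s}$; by the uniqueness part of Bridgeland's theorem (the map $\sigma\mapsto Z$ is locally injective on the set of stability conditions sharing a common support datum), they must coincide provided they are close enough to one another. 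This reduces the $q$-statement to a \emph{local} matter near a given $(\sigma,s)\in\XStab_s\D_\XX$.

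Next I would invoke Bridgeland's deformation theorem for $\sigma\in\Stab\D_\XX$, using the $q$-support property of \cite[Def.~2.8]{IQ1} (which uniformly controls all the $\XX$-translates $\XX^k(E)$ of semistable objects $E$) in place of the ordinary support property. This yields an open neighborhood $U\subset\Hom_\ZZ(K(\D_\XX),\CC)$ of $Z$ and a continuous section $Z''\mapsto\sigma_{Z''}=(Z'',\sli_{Z''})$ of the forgetful map, such that each $\sigma_{Z''}$ satisfies the same (uniform) support bound. Restricting to $U\cap\Hom_R(K(\D_\XX),\CC_s)$, I apply the uniqueness argument above: for $Z'\in U\cap\Hom_R$, both $\XX(\sigma_{Z'})$ and $s\cdot\sigma_{Z'}$ lie in the Bridgeland neighborhood of $\sigma$ and share the central charge $Z'\cdot e^{-\mathbf{i}\pi s}$, hence they agree. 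This shows that $\sigma_{Z'}\in\XStab_s\D_\XX$, and so $\pi_Z$ has a local continuous section with image in $\XStab_s$, giving the desired local isomorphism of topological spaces. The complex structure is then pulled back from that of the linear subspace $\Hom_R(K(\D_\XX),\CC_s)\subset\Hom_\ZZ(K(\D_\XX),\CC)$.

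The main obstacle I foresee is the technical verification that the $q$-support property is genuinely preserved under small deformations of $Z$ inside $\Hom_R$, uniformly in the whole $\XX$-orbit of semistable classes. This is what allows the Bridgeland deformation to be carried out once and for all for the $q$-setting, and it is the place where the definition of $q$-support property (as opposed to the ordinary one) is essential; without it the orbit $\{\XX^k(E)\}_{k\in\ZZ}$ could produce semistable classes whose control degenerates after deformation, breaking the local injectivity. Once this uniform bound is in hand, the rest of the proof is a formal consequence of Bridgeland's theorem together with the uniqueness argument sketched above.
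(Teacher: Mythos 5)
This proposition is quoted from \cite[Thm.~2.10]{IQ1} and the present paper supplies no proof of its own, but your reconstruction follows essentially the same route as the source: Bridgeland's deformation theorem, run with the $q$-support property to control the whole $\XX$-orbit of semistable classes, yields a local section of $\pi_Z$ over $\Hom_R(K(\D_\XX),\CC_s)$, and local injectivity of the forgetful map forces $\XX(\sigma_{Z'})=s\cdot\sigma_{Z'}$ because both sides carry the same central charge and stay close to $\XX(\sigma)=s\cdot\sigma$. The only slip is a sign convention: with the paper's actions $\Phi(Z,\hh{P})=(Z\circ\Phi^{-1},\Phi(\hh{P}))$ and $s\cdot(Z,\hh{P})=(Ze^{-\mathbf{i}\pi s},\hh{P}_{\Re(s)})$, the Gepner equation gives $Z(\XX(E))=e^{+\mathbf{i}\pi s}Z(E)$, consistent with $q_s\colon q\mapsto e^{\mathbf{i}\pi s}$, not $e^{-\mathbf{i}\pi s}$ as you wrote.
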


\begin{theorem}\cite[Thm.~5.9]{IQ1}\label{thm:IQ1}
Let $\sigma=(Z,\hua{P})$ be a stability condition on $\Stab\DQ$
and let $s\in\CC$ satisfies
\[
    \Re(s)\ge\gldim\sigma+1.
\]
Then $\Lag$ induces a $q$-stability condition $(\sigma^{\Lag}_s,s)$
such that $\sigma^{\Lag}_s=(Z^{\Lag}_s,\hua{P}^{\Lag}_s)$ is defined as
\begin{itemize}
  \item $Z^{\Lag}_s=q_s\circ \big(  Z \otimes 1 \big)\colon K(\DXQ)\to\CC$
  (recall that we have \eqref{eq:KK}), where
   $q_s$ is the specialization
    \[
        q_s\colon\CC[q,q^{-1}]\to\CC,\quad q\mapsto e^{\mathbf{i} \pi s}.
    \]
  \item $\hua{P}^{\Lag}_s(\phi)=\<   \hua{P}(\phi+k\Re(s)) [k\XX] \mid k\in\ZZ\>.$
\end{itemize}
\end{theorem}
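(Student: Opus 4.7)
The plan is to verify, in turn, the Bridgeland axioms for $(Z^\Lag_s,\hua{P}^\Lag_s)$, the $q$-support property, and the Gepner equation \eqref{eq:X=s}. Everything rests on two ingredients: the $R$-linear isomorphism \eqref{eq:KK}, which lets me define $Z^\Lag_s$ cleanly as $q_s\circ(Z\otimes 1)$, and the Lagrangian Hom-formula \eqref{eq:HOM}, which reduces every $\Hom_{\DXQ}$-computation between shifted images $\Lag(E)[k\XX]$ to at most two $\Hom_{\DQ}$-computations: the direct one (with $\XX$-index $0$) and a Serre-dual one (with $\XX$-index $1$).

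I would first check central-charge compatibility on generators: for $E\in\hua{P}(\phi+k\Re(s))$, a direct calculation using $q_s=e^{\mathbf{i}\pi s}$ shows that $Z^\Lag_s(\Lag(E)[k\XX])$ lies on the ray $\RR_{>0}\cdot e^{\mathbf{i}\pi\phi}$; axiom (b) is automatic since the integer shift $[1]$ translates each generator uniformly by $+1$ in phase. The critical axiom (c) --- Hom vanishing for $\phi_1>\phi_2$ --- reduces via \eqref{eq:HOM} applied to generators $A_i=\Lag(E_i)[k_i\XX]$ with $E_i\in\hua{P}(\phi_i+k_i\Re(s))$ to cases indexed by $k_2-k_1$: when $k_2=k_1$, a nonzero $\Hom_{\DQ}(E_1,E_2)$ forces $\phi(E_1)\le\phi(E_2)$ and hence $\phi_1\le\phi_2$, contradiction; when $k_2=k_1+1$, the Serre-dual summand forces $0\le\phi(E_1)-\phi(E_2)\le\gldim\sigma$, which, using $\phi(E_i)=\phi_i+k_i\Re(s)$, yields $\phi_1-\phi_2-\Re(s)\in[0,\gldim\sigma]$, and the hypothesis $\Re(s)\ge\gldim\sigma+1$ then makes this window incompatible with strict separation of slices once one checks against the other direction of Bridgeland positivity; for all other values of $k_2-k_1$, formula \eqref{eq:HOM} gives zero outright. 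The same $\Re(s)$-bound ensures that no ``surprise'' semistable objects appear from the extension closure.

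For axiom (d), I would construct HN filtrations in two steps: first decompose $F\in\DXQ$ via the standard $t$-structure on $\DXQ$ whose heart is $\mod\qq{\XX}$, producing a filtration with factors in integer-shifts of that heart; then refine each heart-factor via the HN filtration from $\sigma$ applied to its $\Lag$-decomposition, keeping track of the $\XX$-shifts. The slice-disjointness from (c), together with $\Re(s)\ge\gldim\sigma+1$, forces the successive factors to have strictly decreasing phases in $\hua{P}^\Lag_s$. The $q$-support property descends from the support property of $\sigma$ because $|q_s|=e^{-\pi\Im(s)}$ is a fixed positive constant, so any norm on $K(\DXQ)\otimes\RR$ compatible with \eqref{eq:KK} is uniformly controlled by $|Z^\Lag_s|$. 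The Gepner equation $\XX(\sigma^\Lag_s)=s\cdot\sigma^\Lag_s$ is then a direct check: applying $[\XX]$ reindexes the generators of $\hua{P}^\Lag_s(\phi)$ to match a phase shift by $\Re(s)$, and $Z^\Lag_s\circ\XX^{-1}=e^{-\mathbf{i}\pi s}\cdot Z^\Lag_s$ by the definition of $q_s$. The principal obstacle is axiom (c) in the case $k_2-k_1=1$, where Serre duality produces genuinely dangerous Homs between ``$\XX$-adjacent'' slices; the sharp bound $\Re(s)\ge\gldim\sigma+1$ is precisely what defeats this case, and once that disjointness is in hand, (d), the support property, and the Gepner equation are largely formal.
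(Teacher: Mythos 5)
First, a point of comparison: the paper gives no proof of this statement at all --- it is imported verbatim from \cite[Thm.~2.25]{IQ1} --- so there is nothing internal to measure you against. Your overall strategy (reduce every $\Hom_{\DXQ}$ computation between generators $\Lag(E)[k\XX]$ to the two summands of \eqref{eq:HOM}, use $\Re(s)\ge\gldim\sigma+1$ to kill the Serre-dual summand between $\XX$-adjacent levels, and splice HN filtrations level by level) is the right one and is essentially how the result is proved in [IQ1].

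However, the decisive step is not actually closed as you have written it. In the case $k_2=k_1+1$ you correctly derive $0\le\phi(E_1)-\phi(E_2)\le\gldim\sigma$ from $\Hom_{\DQ}(E_2,E_1)\ne 0$, and then, using the printed convention $\phi(E_i)=\phi_i+k_i\Re(s)$, you obtain $\phi_1-\phi_2\in[\Re(s),\Re(s)+\gldim\sigma]$. This forces $\phi_1-\phi_2\ge\Re(s)\ge 1+\gldim\sigma>0$, which is entirely \emph{compatible} with $\phi_1>\phi_2$ rather than contradictory, so axiom (c) is not established; the appeal to ``the other direction of Bridgeland positivity'' is not an argument. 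The same defect already appears in your central-charge check: for $E\in\hua{P}(\phi+k\Re(s))$ one gets $Z^{\Lag}_s(\Lag(E)[k\XX])=e^{\mathbf{i}\pi ks}Z(E)=m\,e^{-\pi k\Im(s)}e^{2\pi\mathbf{i}k\Re(s)}e^{\mathbf{i}\pi\phi}$, which is \emph{not} on the ray $\RR_{>0}e^{\mathbf{i}\pi\phi}$ unless $k\Re(s)\in\ZZ$. Both problems vanish once the slicing is defined with the opposite sign, $\hua{P}^{\Lag}_s(\phi)=\langle\hua{P}(\phi-k\Re(s))[k\XX]\rangle$, which is the convention the paper itself uses in the proof of Theorem~\ref{thm:CYX}; with that sign your own inequality becomes $\phi_1-\phi_2\le\gldim\sigma-\Re(s)\le-1<0$, which does contradict $\phi_1>\phi_2$. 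A genuinely ``direct calculation'' would have surfaced this sign discrepancy in the statement, so the generator-level checks need to be redone with the corrected convention. Two smaller glosses should also be filled in: Hom-vanishing must be propagated from generators to the extension closures $\langle\,\cdots\rangle$ (routine d\'evissage with long exact sequences), and the $q$-support property is not a matter of $|q_s|$ being a fixed constant --- the semistable classes are $q^k\cdot[\text{ss classes of }\sigma]$ for all $k\in\ZZ$ and $|q_s^k|=e^{-\pi k\Im(s)}$ is unbounded, so one must invoke the $R$-module formulation of \cite[Def.~2.8]{IQ1} rather than a naive norm comparison.
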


As we have calculate the image of $\gldim$ on $\Stab\DQ$ in Theorem~\ref{thm:Dynkin},
a direct corollary is the following.

\begin{corollary}
Let $Q$ be a Dynkin quiver and $\Re(s)\ge \cd+1$.
Then $\XStab_s\DXQ$ is non-empty.
\end{corollary}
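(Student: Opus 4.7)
The plan is to combine the two main inputs available: Theorem~\ref{thm:IQ1}, which manufactures a $q$-stability condition $(\sigma_s^{\Lag},s)$ out of any ordinary stability condition $\sigma$ on $\DQ$ provided $\Re(s)\ge\gldim\sigma+1$, and Theorem~\ref{thm:Dynkin}, which identifies the range of $\gldim$ on $\Stab\DQ/\CC$ as $[\cd,+\infty)$. The corollary then amounts to choosing the $\sigma$ with the smallest possible global dimension.

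First I would invoke Theorem~\ref{thm:last} (or equivalently the minimum part of Theorem~\ref{thm:Dynkin}) to fix $\sigma=\sigma_G\in\Stab\DQ$, the Kajiura-Saito-Takahashi stability condition, whose global dimension is exactly $\gldim\sigma_G=\cd$ by Lemma~\ref{lem:gldim G}. Then I would check the hypothesis of Theorem~\ref{thm:IQ1}: the assumption $\Re(s)\ge\cd+1$ translates directly into $\Re(s)\ge\gldim\sigma_G+1$, which is precisely the bound required for the $q$-lifting construction to apply.

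Applying Theorem~\ref{thm:IQ1} to $\sigma_G$ and the given $s$ produces a $q$-stability condition $(\sigma_{G,s}^{\Lag},s)\in\XStab_s\DXQ$, where the central charge is the specialization $Z_{G,s}^{\Lag}=q_s\circ(Z_G\otimes 1)$ and the slicing is the $\ZZ$-span $\hh{P}_{G,s}^{\Lag}(\phi)=\langle\hh{P}_G(\phi+k\Re(s))[k\XX]\mid k\in\ZZ\rangle$. This produces an explicit element, so $\XStab_s\DXQ$ is non-empty.

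There is no real obstacle: the corollary is essentially the composition of two already-proven results, with the sharp bound $\cd+1$ reflecting exactly the fact that $\cd=1-2/h$ is the minimal value of $\gldim$ attained (uniquely up to the $\CC$-action) at $\sigma_G$. The only thing one might want to double-check is that $\sigma_G$ indeed lies in the stratum of $\Stab\DQ$ to which Theorem~\ref{thm:IQ1} is applied (i.e. that the canonical Lagrangian immersion $\Lag$ from \eqref{eq:Lag} is the relevant one), but this is built into the setup of Section~\ref{sec:q} for Dynkin $Q$.
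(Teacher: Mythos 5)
Your proposal is correct and matches the paper's intent exactly: the paper presents this as a ``direct corollary'' of Theorem~\ref{thm:IQ1} combined with the computation of the range of $\gldim$ in Theorem~\ref{thm:Dynkin}, and the natural witness is precisely $\sigma_G$ with $\gldim\sigma_G=\cd$. Nothing is missing.
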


In particular, for any $\Re(s)\ge\cd+1$, denote by $\ssl$
the induced $q$-stability condition in $\XStab_s\DXQ$
from the Gepner point $\sigma_G\in\Stab\DQ$ with $\gldim\sigma_G=\cd$.
In the rest of this section, we shall prove that $\ssl$
is a Gepner point of $\XStab_s\D_\XX$,
in the sense that it satisfies one more Gepner equation \eqref{eq:Phi=s} other than \eqref{eq:X=s}.

%=========================================================
\subsection{Center of the braid group}
%=========================================================
\begin{definition}\cite{BSai}
Denote by $\Br(Q)$ the braid group (a.k.a Artin group) associated to a Dynkin quiver $Q$,
which admits the following presentation
\[\begin{array}{rll}
    \Br(Q)=\< b_i, i\in Q_0 \mid &\operatorname{Co}(b_i,b_{j}),
        \text{ no arrows between $i$ and $j$}\;;\\
        & \operatorname{Br}(b_j,b_k), \text{ $\exists!$ arrow between $i$ and $j$} \>.
\end{array}\]
where $\operatorname{Co}(a,b)$ means the commutation relation $ab=ba$
and $\Br(a,b)$ means the braid relation $aba=bab$.
\end{definition}
Provided the labeling of vertices as in \eqref{eq:labeling},
define
\begin{gather}
    \zeta_Q=b_n \circ ... \circ b_1,\\
    \delta_Q=\begin{cases}
    1,    \quad &\text{if $Q$ is of type $A_n, D_{2l+1}, E_6$};\\
    1/2,  \quad &\text{if $Q$ is of type $D_{2l}, E_7, E_8$}.
    \end{cases}
\end{gather}
Then it is well-known that $z_Q=\zeta_Q^{\delta_Q h_Q}$
generates of the center of the braid group $\Br(Q)$.

\begin{definition}
An object $S$ in a CY-$\XX$ category $\D$ is \emph{($\XX$-)spherical} if
$$\Hom(S, S[k+l\XX])\begin{cases}
\k,& \text{if $k=0$ and $l\in\{0,1\}$;}\\
0,& \text{otherwise},
\end{cases}$$ and induces
a \emph{twist functor} $\twi_S\in\Aut\D$, such that \[
    \twi_S(X)=\Cone\left(S\otimes\Hom^{\ZZ^2}(S,X)\to X\right)
\]
with inverse
\[
    \twi_S^{-1}(X)=\Cone\left(X\to S\otimes\Hom^{\ZZ^2}(X,S)^\vee \right)[-1].
\].
\end{definition}
In the case when $\DXQ=\D_{fd}(\qq{\XX})$,
any simple $S_i$ for $i\in Q_0$ in the canonical heart
$$\h_Q^\XX\colon=\mod\qq{\XX}$$
is spherical and
the \emph{spherical twist group} is
\[
    \ST_\XX(Q)\colon=\< \twi_{S_i} \mid i\in Q_0 \>\subset\Aut\DXQ.
\]

We have the following.

\begin{theorem}\cite[Thm.~6.6]{IQ1}\label{pp:Br}
There is a canonical isomorphism $\Br(Q)\cong\ST_\XX(Q)$,
sending the standard generators to the standard ones.
\end{theorem}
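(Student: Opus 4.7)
My plan is to establish the isomorphism in three stages: construct the candidate homomorphism $\Br(Q) \to \ST_\XX(Q)$ by sending each Artin generator $b_i$ to the spherical twist $\twi_{S_i}$ at the simple $\qq{\XX}$-module $S_i$, verify that the defining relations of $\Br(Q)$ are satisfied by the images, and then prove that the resulting map is a bijection. Surjectivity is immediate from the very definition of $\ST_\XX(Q)$ as the subgroup generated by $\{\twi_{S_i}\}$, so the real content lies in verifying the relations and in proving injectivity.

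For the relations, I would compute $\Hom^{\ZZ^2}(S_i, S_j)$ directly from the Ginzburg presentation of $\qq{\XX}$ together with the Serre duality pairing encoded in (\ref{eq:HOM}). When $i$ and $j$ share no arrow in $Q$, both $\Hom^{\ZZ^2}(S_i, S_j)$ and $\Hom^{\ZZ^2}(S_j, S_i)$ vanish, so each twist functor acts as the identity on the other's spherical generator and the two visibly commute, giving the commutation relation $\twi_{S_i}\twi_{S_j} = \twi_{S_j}\twi_{S_i}$. When $i$ and $j$ share exactly one arrow, the bi-graded Ext space is one-dimensional in the expected bidegree, and the classical Seidel--Thomas cone-comparison argument goes through verbatim in the $\ZZ \oplus \ZZ\XX$-graded setting, yielding the braid relation $\twi_{S_i}\twi_{S_j}\twi_{S_i} \cong \twi_{S_j}\twi_{S_i}\twi_{S_j}$.

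The main obstacle is faithfulness. My preferred approach is a specialization/reduction argument: for a generic integer $N \ge 2$ there is a natural collapse from $\qq{\XX}$ to the CY-$N$ Ginzburg dg algebra $\qq{N}$ obtained by setting the formal variable $\XX$ equal to $N$; this induces a triangle functor from $\DXQ$ to the usual CY-$N$ category sending $S_i$ to the analogous simple and intertwining the two twist functors, so that the candidate map factors as $\Br(Q) \to \ST_\XX(Q) \to \ST_N(Q)$. Since the composition is the classical braid group action, shown to be faithful by Seidel--Thomas (for $N=2$) and by Brav--Thomas (for general $N \ge 2$) in the Dynkin case, the first arrow must already be injective. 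As a fallback, one can prove faithfulness intrinsically by constructing a free action of $\Br(Q)$ on a combinatorial invariant --- either the set of spherical collections in $\DXQ$, or, via Theorem~\ref{thm:IQ1}, the orbit of a distinguished $q$-stability condition in $\XStab_s\DXQ$ --- and running a Garside-normal-form argument as in Brav--Thomas. In either route, the delicate point to monitor is the completion in the definition of $\qq{\XX}$, which must not introduce any unexpected relations when the collapse functor is applied or when cones of infinite convolutions are compared.
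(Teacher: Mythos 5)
The paper itself contains no proof of this statement---it is imported verbatim from \cite[Thm.~5.4]{IQ1}---but your strategy (surjectivity by the definition of $\ST_\XX(Q)$; the commutation and braid relations via computing $\Hom^{\ZZ^2}(S_i,S_j)$ and running the Seidel--Thomas cone argument in the bigraded setting; injectivity by collapsing the $\ZZ\oplus\ZZ\XX$-grading to a single integer $N\ge 2$, so that $\Br(Q)\to\ST_\XX(Q)\to\ST_N(Q)$ composes to the known faithful action) is essentially the proof given there. The only adjustments worth noting are that a single value of $N\ge2$ suffices (no genericity is needed), and that the faithfulness input for all Dynkin types and all $N\ge2$ is Qiu--Woolf \cite{QW} rather than Brav--Thomas, which covers only $N=2$.
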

Thus the generator $\zeta_Q$ of $Z(\Br(Q))$
becomes
\[
    \zql=\twi_{S_n}\circ\cdots\circ\twi_{S_1}.
\]

%=========================================================
\subsection{$\XX$-Auslander-Reiten functor}
%=========================================================
First, we prove the following lemma.
\begin{lemma}
$\Lag$ induces an injection $\Lag_*\colon\Aut\DQ\to\Aut\DXQ$
that fits into the commutative diagram
\[\xymatrix@R=3pc@C=3pc{
    \DQ \ar[r]^{\Phi} \ar[d]_{\Lag} &\DQ \ar[d]^{\Lag}\\
    \DXQ \ar[r]^{\Lag_*(\Phi)} &\DXQ,
}\]
for any $\Phi\in\Aut\DQ$.
\end{lemma}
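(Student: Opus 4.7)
The plan is to construct $\Lag_*$ by lifting autoequivalences via Keller's functoriality of the graded Calabi-Yau-$\XX$ completion. Given $\Phi \in \Aut\DQ$ represented by an invertible dg $\k Q$-bimodule $M_\Phi$, I would form the induced dg $\qq{\XX}$-bimodule
\[
    \widetilde{M_\Phi} \;:=\; \qq{\XX} \otimes^{\OPL}_{\k Q} M_\Phi \otimes^{\OPL}_{\k Q} \qq{\XX},
\]
which remains invertible because the CY-$\XX$ completion is Morita-invariant. Setting $\Lag_*(\Phi) := - \otimes^{\OPL}_{\qq{\XX}} \widetilde{M_\Phi}$ produces an autoequivalence of $\DXQ$; because the Serre functor $\XX$ is intrinsic to the CY-$\XX$ structure, $\Lag_*(\Phi)$ automatically commutes with $\XX$ and therefore lies in the group $\Aut\DXQ$ as defined in the paper.

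For the commutative square and the group law I would use that, up to Morita equivalence, $\Lag$ is the functor $- \otimes^{\OPL}_{\k Q} \qq{\XX}$. Tensor associativity then gives
\[
    \Lag_*(\Phi) \circ \Lag \;\simeq\; - \otimes^{\OPL}_{\k Q} M_\Phi \otimes^{\OPL}_{\k Q} \qq{\XX} \;\simeq\; \Lag \circ \Phi,
\]
and $\widetilde{M_{\Phi \circ \Psi}} \simeq \widetilde{M_\Phi} \otimes^{\OPL}_{\qq{\XX}} \widetilde{M_\Psi}$ shows $\Lag_*$ is a group homomorphism. For injectivity, suppose $\Lag_*(\Phi) \simeq \id_{\DXQ}$. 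Pre-composing with $\Lag$ in the commutative square yields $\Lag \circ \Phi \simeq \Lag$; evaluating on each simple $S_i \in \h_Q$ and using that $\Lag$ identifies the simples of $\h_Q$ with pairwise non-isomorphic simples of $\h_Q^\XX$, one deduces $\Phi(S_i) \simeq S_i$ for every $i$. Since the $S_i$ generate $\DQ$ and $\Phi$ is an autoequivalence acting trivially on the Ext-quiver, this forces $\Phi \simeq \id_{\DQ}$, which gives the desired injectivity.

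The main obstacle is the technical bookkeeping behind Keller's CY-completion functoriality in the $\ZZ \oplus \ZZ\XX$-bigraded setting: one must verify carefully that $\widetilde{M_\Phi}$ is genuinely invertible over $\qq{\XX}$ and that the induced autoequivalence respects both the cohomological and the auxiliary $\XX$-grading, so that the commutation with $\XX$ is not just an accident. A more hands-on alternative avoids dg bimodules altogether: since $\DXQ$ is generated, as a triangulated subcategory closed under $\XX$, by the image of $\Lag$, one can define $\Lag_*(\Phi)$ on generators by the prescription $\Lag_*(\Phi)\bigl(\Lag(M)[k\XX]\bigr) := \Lag(\Phi(M))[k\XX]$ and then extend by the universality of the triangulated hull, at the cost of more explicit compatibility verifications with \eqref{eq:HOM}.
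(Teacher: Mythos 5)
Your overall strategy---lifting $\Phi$ through Keller's functoriality of the Calabi--Yau-$\XX$ completion---is a legitimate and genuinely different route from the paper's, which instead passes through Koszul duality: the paper writes $\DQ\cong\per\ee_Q$ and $\DXQ\cong\per\ee_Q^\XX$ for $\ee_Q=\RHom(S_Q,S_Q)$ and its $\ZZ^2$-graded analogue, observes via \eqref{eq:HOM} that $\ee_Q^\XX$ is determined by $\ee_Q$, and transports $\Phi$ across this identification. However, your execution has two concrete problems. First, $\Lag$ is \emph{not} the induction functor $-\otimes^{\OPL}_{\k Q}\qq{\XX}$: it is induced by the projection $\qq{\XX}\to\k Q$, i.e.\ it is restriction of scalars $-\otimes^{\OPL}_{\k Q}{}_{\k Q}(\k Q)_{\qq{\XX}}$ (induction would send $\k Q$ to $\qq{\XX}$, which is not finite-dimensional, so it does not even land in $\DXQ=\D_{fd}(\qq{\XX})$). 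Consequently your verification of the commutative square by ``tensor associativity'' does not go through as written: with your $\widetilde{M_\Phi}$ one computes $\k Q\otimes^{\OPL}_{\qq{\XX}}\widetilde{M_\Phi}\simeq M_\Phi\otimes^{\OPL}_{\k Q}\qq{\XX}$, which is not isomorphic to $M_\Phi$ regarded as a $\k Q$-$\qq{\XX}$-bimodule via the projection.

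Second, the bimodule $\widetilde{M_\Phi}=\qq{\XX}\otimes^{\OPL}_{\k Q}M_\Phi\otimes^{\OPL}_{\k Q}\qq{\XX}$ is not the induced bimodule in Keller's functoriality, and its invertibility is not ``automatic from Morita-invariance.'' The correct mechanism is that an invertible $\k Q$-bimodule $M_\Phi$ satisfies a compatibility $M_\Phi^\vee\otimes^{\OPL}\Theta\otimes^{\OPL}M_\Phi\simeq\Theta$ with the inverse dualizing bimodule $\Theta$, which is what lets one extend $M_\Phi$ to a one-sided induced bimodule over the tensor algebra $\qq{\XX}=T_{\k Q}(\Theta[\XX-1])$ carrying a well-defined right $\qq{\XX}$-action; the naive two-sided induction you wrote does not encode this and is generically too large to be invertible. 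These are fixable by quoting the precise functoriality statement from \cite{K}, but as written the central construction and the verification of the square both rest on misidentifications. Your injectivity argument (evaluate on the simples $S_i$ and use that $\Lag$ is fully faithful on them) is essentially the paper's and is fine at the same level of rigor; your second, ``hands-on'' alternative is closer in spirit to the paper's Koszul-duality proof, but defining a triangulated functor on generators and ``extending by universality'' requires an enhancement argument that you would still need to supply.
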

\begin{proof}
 By Kozsul duality, $$\DQ=\D_{fd}(\k Q)\cong\per\ee_Q,$$
where
 $$\ee_Q=\RHom_{\DQ}(S_Q,S_Q)$$
is the dg endomorphism algebra for $S_Q=\bigoplus S_i$.
Similarly, we have
$$\DXQ=\D_{fd}(\qq{\XX})\cong\per\ee_Q^\XX,$$
where
$$\ee_Q^\XX=\RHom^{\ZZ^2}_{\DXQ}(S_Q, S_Q)$$
is the differential $\ZZ^2$-graded endomorphism algebra.
Then any auto-equivalence $\Phi$ in $\Aut\DQ$ maps $\ee_Q$
to another dg endomorphism algebra
$$\Phi(\ee_Q)=\RHom^{\ZZ^2}_{\DXQ}(\Phi(S_Q), \Phi(S_Q))$$
that $\Phi$ can be realized as $\per\ee_Q\to\per\Phi(\ee_Q)$.
After applying $\Lag$ that passes to $\DXQ$,
we obtain an auto-equivalence $$\Lag_*(\Phi)\colon\per\ee_Q^\XX\to\per\Lag_*\left(\Phi(\ee_Q^\XX)\right).$$
Finally, if $\Lag_*(\Phi)=\id$ preserves $S_i$ in $\DXQ$ (and the $\Hom$s between them),
then $\Phi$ preserves them in $\DQ$, which must be identity.
\end{proof}

Now, let $\txl=\Lag_*(\AR)\in\Aut\DXQ$.
We have the following.

\begin{proposition}
\label{pp:center}
Let $Q$ be a Dynkin quiver. Then
\begin{gather}
   \txl=[\XX-2]\circ\zql
\end{gather}
satisfies $(\txl)^h=[-2]$.
\end{proposition}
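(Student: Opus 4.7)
The plan is to handle the two identities separately. The second one, $(\txl)^h = [-2]$, is immediate. By the preceding lemma, $\Lag_*\colon \Aut\DQ \to \Aut\DXQ$ is an injective group homomorphism. Since $\Lag$ is triangulated and $\Lag_*(\Phi)$ is defined to commute with $\XX$, one checks that $\Lag_*([1]) = [1]$ as autoequivalences of $\DXQ$ (it agrees with $[1]$ on $\Lag(\DQ)$ and commutes with $\XX$, hence with any object of $\DXQ$). Combining this with $\AR^h = [-2]$ in $\DQ$ from Theorem~\ref{thm:KST} (cf.\ \eqref{eq:s ar h}) yields
\[
(\txl)^h \;=\; \Lag_*(\AR)^h \;=\; \Lag_*(\AR^h) \;=\; \Lag_*([-2]) \;=\; [-2].
\]

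For the main identity $\txl = [\XX-2] \circ \zql$, or equivalently $\zql = [2-\XX] \circ \txl$, I would proceed as follows. Both sides are autoequivalences of $\DXQ$ commuting with $\XX$, so by the preceding lemma it is enough to verify the identity after restriction along $\Lag$: on an object $\Lag(X)$ for $X \in \DQ$ the claim becomes
\[
\twi_{S_n} \circ \cdots \circ \twi_{S_1}\bigl(\Lag(X)\bigr) \;=\; \Lag\bigl(\AR(X)\bigr)[2-\XX].
\]
By standard generation arguments, one can reduce to $X$ running over the indecomposable projectives $P_1,\dots,P_n$ (or equivalently the simples $S_i$) of $\mod\k Q$. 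For such $X$, $\AR(X)$ admits the classical explicit description via the Nakayama functor on the hereditary category $\mod\k Q$, while the iterated spherical twists on the left can be unwound using the defining triangle of each $\twi_{S_j}$ together with the $\Hom$-splitting \eqref{eq:HOM}. The crucial point is that the summand $D\RHom_{\DQ}(M,L)[-\XX]$ appearing in \eqref{eq:HOM} is precisely what generates the prefactor $[2-\XX]$ on the right-hand side.

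The main obstacle I expect is the bookkeeping for this iterative computation, which becomes intricate for Dynkin types $D$ and $E$. A cleaner route is to invoke the classical analogue due to Seidel--Thomas (for $N=2$) and Keller (for $N\ge 2$): in the finite-dimensional derived category of the CY-$N$ Ginzburg dg algebra of a Dynkin quiver $Q$, one has the identity $\zeta_Q = [2-N] \circ \Lag_*(\AR)$. Since the CY-$\XX$ Ginzburg dg algebra $\qq{\XX}$ specialises to the CY-$N$ Ginzburg algebra under $\XX \mapsto N$ in a way compatible with the spherical twist generators and with the Lagrangian embedding, the classical identity lifts to the $\XX$-graded setting, giving $\zql = [2-\XX] \circ \txl$ as required. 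An alternative tactic, if one wishes to stay inside $\DXQ$, is to observe that both $\zql$ and $\txl$ induce the Coxeter element on the $R$-module $K(\DXQ) \cong_R R^n$ via \eqref{eq:KK}, so their ratio lies in the kernel of the action of $\Aut\DXQ$ on $K_0$, and a rigidity/faithfulness argument then pins down the ratio to the shift $[\XX-2]$.
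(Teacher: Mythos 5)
Your preferred route is essentially the paper's own: the proof given there simply observes that the calculation is identical to the Calabi--Yau-$N$ case (citing Prop.~6.4.1 of the author's thesis) and that the only extra input needed, the isomorphism $\Br(Q)\cong\ST_\XX(Q)$, is supplied by Theorem~\ref{pp:Br}. Your ``cleaner route'' of lifting the classical identity from the CY-$N$ Ginzburg algebra to the $\XX$-graded setting, together with your derivation of $(\txl)^h=[-2]$ from $\AR^h=[-2]$ via the injection $\Lag_*$, matches this in substance.

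One caveat: your final alternative (the $K_0$-rigidity argument) does not work as stated. On $K(\DXQ)\cong_R R^n$ the shift $[1]$ acts by $-1$ and $[\XX]$ by $q$, so $[\XX-2]$ acts by multiplication by $q$; hence $\txl=[\XX-2]\circ\zql$ and $\zql$ induce maps on $K_0$ differing by the factor $q$ (indeed $\txl$ induces $\AR\otimes R$, while $\zql$ induces the $q$-deformed Coxeter element), so they are not both ``the Coxeter element''. Moreover, even granting that the ratio acts trivially on $K_0$, an autoequivalence acting trivially on $K_0$ need not be a shift, so this alone cannot pin the ratio down to $[\XX-2]$ without the faithfulness input that the paper routes through $\Br(Q)\cong\ST_\XX(Q)$.
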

\begin{proof}
The calculation is exactly the same as the Calabi-Yau-$N$ case in the proof of \cite[Prop.~6.4.1]{Qiu}.
Note that the assumption there, i.e. the isomorphism $\Br(Q)\cong\ST_N(Q)$,
has been proved in Theorem~\ref{pp:Br} (cf. \cite{QW}).
\end{proof}

Recall we have a Gepner point $\sigma_G=(Z_G, \hua{P}_G)$ on $\Stab\DQ$
and it induces a $q$-stability condition $(\ssl,s)$
for $\Re(s)\ge \cd+1$, where $\ssl=(Z^{\Lag}_s,\hua{P}^{\Lag}_s)$ is constructed in Theorem~\ref{thm:IQ1}.

\begin{theorem}\label{thm:CYX}
$\ssl\in\XStab_s\DXQ$ satisfies the Gepner equation
\begin{gather}\label{eq:AR=}
    \txl(\sigma)=(-\frac{2}{h}) \cdot \sigma
\end{gather}
for $\Re(s)\ge\cd+1$.
\end{theorem}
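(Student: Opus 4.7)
The plan is to exploit the naturality of the construction $\sigma \mapsto \sigma^{\Lag}_s$ from Theorem~\ref{thm:IQ1}. The key observation is that this construction intertwines the $\CC$-action on $\Stab\DQ$ with the $\CC$-action on $\XStab_s\DXQ$, and the $\Aut\DQ$-action on $\Stab\DQ$ with the $\Aut\DXQ$-action on $\XStab_s\DXQ$ through the injection $\Lag_*\colon\Aut\DQ\hookrightarrow\Aut\DXQ$ from the lemma preceding Proposition~\ref{pp:center}. Once this equivariance is in place, the theorem will follow formally by applying the construction to the Gepner equation $\AR(\sigma_G)=(-2/h)\cdot\sigma_G$ on $\Stab\DQ$ provided by Theorem~\ref{thm:last}.

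First, I would verify compatibility with the $\CC$-action, namely $c\cdot\sigma^{\Lag}_s=(c\cdot\sigma)^{\Lag}_s$ for every $c\in\CC$. This is a direct check using the formulas $Z^{\Lag}_s=q_s\circ(Z\otimes 1)$ and $\hua{P}^{\Lag}_s(\phi)=\<\hua{P}(\phi+k\Re(s))[k\XX]\mid k\in\ZZ\>$: on central charges both sides acquire the factor $e^{-\mathbf{i}\pi c}$, and on slicings both sides shift by $\Re(c)$.

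Next, I would establish the $\Aut$-equivariance $\Lag_*(\Phi)(\sigma^{\Lag}_s)=(\Phi(\sigma))^{\Lag}_s$ for every $\Phi\in\Aut\DQ$. Under the $R$-linear isomorphism $K(\DXQ)\cong K(\DQ)\otimes R$ from \eqref{eq:KK}, the action of $\Lag_*(\Phi)$ on the Grothendieck group is $\Phi_*\otimes 1_R$, so the central charge identity
\[
   Z^{\Lag}_s\circ\Lag_*(\Phi)^{-1}=q_s\circ\bigl((Z\circ\Phi^{-1})\otimes 1\bigr)
\]
is immediate. For the slicings one uses that $\Lag_*(\Phi)$ commutes with $[\XX]$ (this is what places it inside $\Aut\DXQ$) and that its restriction to $\Lag(\DQ)$ coincides with $\Phi$, giving
\[
   \Lag_*(\Phi)\bigl(\hua{P}^{\Lag}_s(\phi)\bigr)
   =\bigl\<\Phi(\hua{P}(\phi+k\Re(s)))[k\XX]\,\bigm|\,k\in\ZZ\bigr\>
   =(\Phi\hua{P})^{\Lag}_s(\phi).
\]

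Finally, combining these two equivariances with Theorem~\ref{thm:last} and the identification $\txl=\Lag_*(\AR)$ yields
\[
   \txl(\ssl)=\Lag_*(\AR)(\sigma_G^{\Lag}_s)
   =\bigl(\AR(\sigma_G)\bigr)^{\Lag}_s
   =\Bigl(-\tfrac{2}{h}\cdot\sigma_G\Bigr)^{\Lag}_s
   =-\tfrac{2}{h}\cdot\ssl,
\]
which is exactly \eqref{eq:AR=}. The hypothesis $\Re(s)\ge\cd+1=\gldim\sigma_G+1$ is only needed to guarantee that $\ssl$ is defined and lies in $\XStab_s\DXQ$ via Theorem~\ref{thm:IQ1}; the Gepner identity itself holds as soon as the construction makes sense. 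The only delicate step is the slicing half of the $\Aut$-equivariance, but given the explicit generating description of $\hua{P}^{\Lag}_s$ and the compatibility of $\Lag_*$ with $[\XX]$, this reduces to unwinding definitions.
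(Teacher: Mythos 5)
Your proposal is correct and is essentially the paper's argument: the paper likewise verifies the central charge identity via $\txl=\AR\otimes R$ on Grothendieck groups together with $Z_G\circ\AR=e^{2\pi\mathbf{i}/h}Z_G$, and the slicing identity via the generating description of $\hua{P}^{\Lag}_s$ together with $\AR(\hua{P}_G)=(\hua{P}_G)_{-2/h}$. Packaging these checks as general $\CC$- and $\Aut$-equivariance of the induction $\sigma\mapsto\sigma^{\Lag}_s$ before specializing to $(\AR,-2/h)$ is only a cosmetic reorganization of the same computation.
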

\begin{proof}
As $\txl$ is induced from $\AR$ via $\Lag$, we have
$\txl= \AR\otimes R $ on the Grotendieck groups.
Thus, for the central charge we have
\[\begin{array}{rcl}
    Z_{G,s}^\Lag\circ\txl &=& \left( q_s\circ(Z_G\otimes R) \right) \circ \txl\\
    &=&q_s\circ \left( (Z_G\otimes R)\circ \txl \right) \\
    &=&q_s\circ\left(  (Z_G\circ\AR)\otimes R\right) \\
    &=&q_s\circ  \left(  (e^{2\pi\mathbf{i}/h}\cdot Z_G)\otimes R\right) \\
    &=& e^{2\pi\mathbf{i}/h}\cdot \big( q_s\circ  (Z_G\otimes R) \big) \\
    &=& e^{2\pi\mathbf{i}/h}\cdot Z_{G,s}^\Lag,
\end{array}\]
where we use the Gepner property of $\sigma_G$ that $Z_G\circ \AR= e^{2\pi\mathbf{i}/h}\cdot Z_G$.
For the slicing, we have $\txl(\hua{P})=\AR(\hua{P})$
(recall that we identify $\hua{P}(\phi)\subset\DQ$ with its image in $\DXQ$ under $\Lag$)
and hence
\[\begin{array}{rcl}
    \txl\big( \hua{P}_{G,s}^\Lag(\phi) \big)
        &=&  \big\<  \AR\big( \hua{P}(\phi-k\Re(s))  \big) [k\XX] \mid k\in\ZZ \big\>\\
        &=&  \big\<  \hua{P}_{(-2/h)}(\phi-k\Re(s) )  [k\XX] \mid k\in\ZZ \big\>\\
        &=& ( \hua{P}_{G,s}^\Lag )_{(-2/h)} (\phi)
\end{array}\]
where we use the Gepner property of $\sigma_G$ that $\AR(\hua{P})=\hua{P}_{(-2/h)}$.
Thus $\ssl$ satisfies \eqref{eq:AR=}.
\end{proof}

%\begin{corollary}
%\end{corollary}

\begin{remark}
A reachable Lagrangian immersion $\Lag'$, by definition,
is of the form $\Lag'=\Upsilon\circ\Lag$ for $\Upsilon\in\Aut\DXQ$,
where $\Lag$ is the fixed initial Lagrangian immersion in \eqref{eq:Lag}.
By the construction in Theorem~\ref{thm:IQ1},
we have
\[
    \sigma_{G,s}^{\Lag'}=\Upsilon\circ\ssl,
\]
which solves the equation
\[
    \AR_\XX^{\Lag'}(\sigma)=(-\frac{2}{h})\cdot\sigma,
\]
for
\[
    \AR_\XX^{\Lag'}=(\Lag')_*(\AR)=
    \Upsilon\circ\txl\circ\Upsilon^{-1}.
\]
Therefore, all such Gepner points $\sigma_{G,s}^{\Lag'}$
correspond to the same point $\sigma_G$ in $\Stab\DXQ/\CC$.
\end{remark}

%=========================================================

%=========================================================
\end{document}